\numberwithin{equation}{section}
\newtheorem{thm}{Theorem}[section]
\newtheorem{lemma}[thm]{Lemma}
\newtheorem{cor}[thm]{Corollary}
\newtheorem{prop}[thm]{Proposition}
\theoremstyle{definition}
\newtheorem{defn}[thm]{Definition}
\newtheorem{remark}[thm]{Remark}
\title[Tensorization of $p$-weak differential structures]{Tensorization of $p$-weak Differentiable structures}
\author{Sylvester Eriksson-Bique}
\author{Tapio Rajala}
\author{Elefterios Soultanis}
\address{Sylvester Eriksson-Bique\\
Research Unit of Mathematical Sciences \\
P.O.Box 3000\\
FI-90014 Oulu\\
Finland}
\email{\tt sylvester.eriksson-bique@oulu.fi}
\address{Tapio Rajala\\
Department of Mathematics and Statistics \\
University of Jyvaskyla\\
P.O. Box 35 (MaD) \\
         FI-40014 University of Jyvaskyla \\
         Finland}
\email{\tt tapio.m.rajala@jyu.fi}
\address{Elefterios Soultanis\\
IMAPP\\
Radboud University, 6525 AJ Nijmegen, Heyendaalsweg 135, The Netherlands}
\email{\tt elefterios.soultanis@gmail.com}
\def\Xint#1{\mathchoice
	{\XXint\displaystyle\textstyle{#1}}%
	{\XXint\textstyle\scriptstyle{#1}}%
	{\XXint\scriptstyle\scriptscriptstyle{#1}}%
	{\XXint\scriptscriptstyle\scriptscriptstyle{#1}}%
	\!\int}
\def\XXint#1#2#3{{\setbox0=\hbox{$#1{#2#3}{\int}$}
		\vcenter{\hbox{$#2#3$}}\kern-.5\wd0}}
\def\dashint{\Xint-}
\newcommand{\N}{\ensuremath{\mathbb{N}}}
\newcommand{\R}{\ensuremath{\mathbb{R}}}
\def\vint_#1{\mathchoice%
        {\mathop{\kern 0.2em\vrule width 0.6em height 0.69678ex depth -0.58065ex
                \kern -0.8em \intop}\nolimits_{\kern -0.4em#1}}%
        {\mathop{\kern 0.1em\vrule width 0.5em height 0.69678ex depth -0.60387ex
                \kern -0.6em \intop}\nolimits_{#1}}%
        {\mathop{\kern 0.1em\vrule width 0.5em height 0.69678ex
            depth -0.60387ex
                \kern -0.6em \intop}\nolimits_{#1}}%
        {\mathop{\kern 0.1em\vrule width 0.5em height 0.69678ex depth -0.60387ex
                \kern -0.6em \intop}\nolimits_{#1}}}
\def\vintslides_#1{\mathchoice%
        {\mathop{\kern 0.1em\vrule width 0.5em height 0.697ex depth -0.581ex
                \kern -0.6em \intop}\nolimits_{\kern -0.4em#1}}%
        {\mathop{\kern 0.1em\vrule width 0.3em height 0.697ex depth -0.604ex
                \kern -0.4em \intop}\nolimits_{#1}}%
        {\mathop{\kern 0.1em\vrule width 0.3em height 0.697ex depth -0.604ex
                \kern -0.4em \intop}\nolimits_{#1}}%
        {\mathop{\kern 0.1em\vrule width 0.3em height 0.697ex depth -0.604ex
                \kern -0.4em \intop}\nolimits_{#1}}}
\newcommand{\LIP}{\ensuremath{\mathrm{LIP}}}
\newcommand{\Lip}{\ensuremath{\mathrm{Lip}}}
\newcommand{\defeq}{\mathrel{\mathop:}=}
\newcommand{\Mod}{\ensuremath{\mathrm{Mod}}}
\newcommand{\eps}{\varepsilon}
\newcommand{\ud}{\mathrm{d}}
\newcommand{\Ne}[2]{N^{1,#1}(#2)}
\newcommand{\inv}{^{-1}}
\newcommand{\spt}{\mathrm{spt}}
\begin{document}

\begin{abstract}
We consider $p$-weak differentiable structures that were recently introduced in \cite{teriseb}, and prove that the  product of $p$-weak charts is a $p$-weak chart. This implies that the product of two spaces with a $p$-weak differentiable structure also admits a $p$-weak differentiable structure. We make partial progress on the tensorization problem of Sobolev spaces by showing an isometric embedding result. Further, we establish tensorization when one of the factors is PI.
\end{abstract}
\maketitle
\tableofcontents

\section{Introduction}

\subsection{Background}

A weak notion of differentiable charts in singular metric spaces first arose in Cheeger's seminal paper \cite{che99}. \emph{Lipschitz differentiability charts}, or Cheeger charts, have since become ubiquitous in analysis in metric spaces, with connections to rectifiability, (non-)embedding results and other topics in geometric measure theory. A Cheeger chart $(U,\varphi)$ consists of a Borel set $U\subset X$ and a Lipschitz map $\varphi\colon X\to \R^n$ such that every $f\in\LIP(X)$ admits a differential $x\mapsto \ud_xf\colon U\to (\R^n)^*$, uniquely determined for $\mu$-a.e. $x\in U$, satisfying

\begin{equation}\label{eq:che_diff}
f(y)-f(x)=\ud_xf((\varphi(y)-\varphi(x))+o(d(x,y)).
\end{equation}

Cheeger charts describe the infinitesimal behaviour of Lipschitz functions but are not well suited for studying Sobolev functions in the absence of additional assumptions. In \cite{teriseb} the first and third named authors developed \emph{$p$-weak charts}, a further weakening of Cheeger charts. We refer to Section 2 for their definition and mention here that $p$-weak charts control the behaviour of Sobolev functions curvewise  and exist under very mild assumptions, e.g. when the underlying space has finite Hausdorff dimension. While weaker than the notion of Cheeger charts, the existence of non-trivial $p$-weak charts guarantees the existence of non-negligible families of curves, and induces a pointwise norm given as the essential supremum of directional derivatives along these curves. Indeed, Sobolev functions admit a $p$-weak differential with respect to $p$-weak charts, and the minimal upper gradient is recovered as the pointwise norm of the differential.

In this paper we consider products of spaces admitting a $p$-weak differentiable structure and prove that they likewise admit a $p$-weak differentiable structure. 

\subsection{Statement of main results}

Let $X=(X,d_X,\mu)$ and $Y=(Y,d_Y,\nu)$ be two metric measure spaces, i.e. complete separable metric spaces equipped with  Radon measures that are finite on balls. Throughout this paper we equip the product space $X\times Y$ with the product measure $\mu\times\nu$ and metric
\begin{equation}\label{eq:prod_metric}
    d((x,y),(x',y'))\defeq \|(d_X(x,x'),d_Y(y,y'))\|,\quad (x,y),(x',y')\in X\times Y,
\end{equation}
where $\|\cdot\|$ is a norm on $\R^2$. Note that the behaviour of the norm on the first quadrant determines the metric. In the theorem below we denote by 
\[
\|(x,y)\|'\defeq \max\{ |ax+by|: \|(a,b)\|=1,\ a,b\ge 0 \},\quad (x,y)\in \R^2,
\]
the \emph{partial dual norm} of a given planar norm $\|\cdot\|$. While $\|\cdot\|'\le \|\cdot\|^*$ in general, the equality $\|\cdot\|'=\|\cdot\|^*$ holds for $l^p$-norms (and more generally norms satisfying $\|(a,b)\|=\|(|a|,|b|)\|$ for $(a,b)\in \R^2$). Roughly speaking the need for the partial (rather than the ``full'') dual norm comes from the fact that the metric speed of product curves does not distinguish between the direction in which each of the component curves is traversed, see estimate \eqref{eq:norm_is_min_ug} in the proof of Proposition \ref{prop:ptwise_norm_on_prod}.



\begin{thm}\label{thm:tensor-charts} Suppose that $(U,\varphi)$ and $(V,\psi)$ are $p$-weak charts of dimension $N$ and $M$ in $X$ and $Y$, respectively. Then $(U\times V, \varphi\times \psi)$ is an $(N+M)$-dimensional $p$-weak chart for $X\times Y$. The local norm on $(\R^{N})^*\times(\R^M)^*\equiv (\R^{N+M})^*$ is given by
\begin{align*}
|(\xi_X,\xi_Y)|_{(x,y)} = \|(|\xi_X|_x, |\xi_Y|_y)\|',\quad (\xi_X,\xi_Y)\in (\R^{N+M})^*,    
\end{align*}
for $\mu\times\nu$-almost every $(x,y)\in X\times Y$.
\end{thm}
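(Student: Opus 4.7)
The plan is to verify the defining properties of a $p$-weak chart for $(U\times V,\varphi\times\psi)$ directly, combining the chart structures on the two factors with Fubini-type arguments for Sobolev slicing and for $p$-modulus of curve families on the product. As a first step, given $f\in N^{1,p}(X\times Y)$ a slicing argument yields $f(\cdot,y)\in N^{1,p}(X)$ for $\nu$-a.e.\ $y$ and $f(x,\cdot)\in N^{1,p}(Y)$ for $\mu$-a.e.\ $x$, and I use the $p$-weak differentials on each factor to propose the candidate
\[
d_{(x,y)}f \defeq \bigl(d_x(f(\cdot,y)),\, d_y(f(x,\cdot))\bigr) \in (\R^N)^*\times(\R^M)^*,
\]
defined $\mu\times\nu$-a.e.\ on $U\times V$.

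The key step is to show that along $p$-a.e.\ curve $\gamma=(\alpha,\beta)$ in $X\times Y$, this candidate actually serves as a $p$-weak differential. Any such curve has absolutely continuous components with metric speed $|\dot\gamma|=\|(|\dot\alpha|,|\dot\beta|)\|$, and a chain-rule decomposition formally expresses $(f\circ\gamma)'$ at a.e.\ time as a sum of two terms, one from each slice differential. Making this rigorous requires a Fubini argument that pushes $p$-negligible bad-curve families on the factors to $p$-negligible families of product curves, so that the chart identities on the factors may be applied pointwise in time along $p$-a.e.\ product curve. The pointwise norm upper bound then follows from
\[
|\xi_X((\varphi\circ\alpha)'(t))+\xi_Y((\psi\circ\beta)'(t))| \le |\xi_X|_x|\dot\alpha(t)|+|\xi_Y|_y|\dot\beta(t)| \le \|(|\xi_X|_x,|\xi_Y|_y)\|'\,|\dot\gamma(t)|,
\]
the outer inequality being the definition of the partial dual norm evaluated on the nonnegative pair $(|\dot\alpha|,|\dot\beta|)$. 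The restriction to the nonnegative quadrant---and thus the appearance of $\|\cdot\|'$ in place of $\|\cdot\|^*$---is forced by the fact that the product metric only registers absolute component speeds.

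For the matching lower bound I would fix a generic $(x,y)\in U\times V$, choose near-optimal nonnegative weights $(a,b)$ with $\|(a,b)\|=1$ realizing the partial dual norm of $(|\xi_X|_x,|\xi_Y|_y)$, and paste together product curves through $(x,y)$ by coupling almost-norm-realizing curves from each factor reparameterized at relative speeds $a$ and $b$. A Fubini-type lower bound on $p$-modulus shows that such product families remain non-negligible, forcing the essential supremum of directional derivatives at $(x,y)$ to reach $\|(|\xi_X|_x,|\xi_Y|_y)\|'$. The principal obstacle throughout is this two-way $p$-modulus comparison between factors and product: one direction is needed to push negligibility forward, the other to exhibit enough product curves realizing each admissible nonnegative direction $(a,b)$, all while carefully tracking the sign ambiguity that underlies the distinction between the partial and the full dual norm.
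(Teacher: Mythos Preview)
Your plan has a genuine gap at the ``chain-rule decomposition'' step, and the Fubini argument you describe does not close it. Fix a good curve $\gamma=(\alpha,\beta)$ and set $H(t,s)=f(\alpha(t),\beta(s))$. What you need is that $\delta(t)\defeq H(t,t)$ satisfies $\delta'(t)=\partial_1H(t,t)+\partial_2H(t,t)$ for a.e.\ $t$. Pushing null sets from the factors gives you at best that $\partial_1H$ and $\partial_2H$ exist for a.e.\ $(t,s)\in[0,1]^2$; but the diagonal is a null set in $[0,1]^2$, so this says nothing about whether the partials exist at $(t,t)$, let alone that they sum to $\delta'(t)$. The paper flags exactly this obstruction (see the remark after Lemma~\ref{lem:diagonal-lemma}): for $H(t,s)=\max(t,s)$ one has $\delta'\equiv 1$ while neither partial exists on the diagonal. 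For a general $f\in N^{1,p}(X\times Y)$ there is no reason the slice regularity lines up with the diagonal in the way you need, so your candidate differential cannot be verified curvewise by this route.

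The paper circumvents this by never attempting the chain rule for a general $f$. Instead it uses the characterization in Theorem~\ref{thm:equiv_lipdiff_weakchart} to reduce to Lipschitz $f$, then approximates by McShane extensions built from distance functions $d((x_0,y_0),\cdot)=\|(d_X(x_0,\cdot),d_Y(y_0,\cdot))\|$, and finally approximates the norm $\|\cdot\|$ by $C^1$ functions. For functions of the special form $h\circ(u,v)$ with $h\in C^1(\R^2)$, $u\in N^{1,p}_{loc}(X)$, $v\in N^{1,p}_{loc}(Y)$, the smoothness of $h$ forces the partial derivatives of $H$ to exist on full horizontal and vertical slices (not merely a.e.), and a diagonal lemma (Lemma~\ref{lem:diagonal-lemma}) then legitimately yields the splitting. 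The passage to limits is handled by a closure lemma for $p$-weak differentials under equi-integrable gradients. Your lower-bound idea---reparametrizing factor curves at relative speeds $(a,b)$ with $a,b\ge 0$---is close in spirit to the paper's argument for Proposition~\ref{prop:ptwise_norm_on_prod}, but note that the paper implements it via $q$-test plans and disintegration rather than a direct modulus comparison; the latter would again require care you have not supplied.
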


Theorem \ref{thm:tensor-charts} yields the following immediate corollary.
\begin{cor}\label{cor:prod_has_diff_struct}
Let $X$ and $Y$ be two metric measure spaces which admit a $p$-weak differentiable structure. Then $X\times Y$ with the product metric \eqref{eq:prod_metric} admits a $p$-weak differentiable structure.
\end{cor}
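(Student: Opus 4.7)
The plan is to deduce this directly from Theorem \ref{thm:tensor-charts} using the standard fact that a $p$-weak differentiable structure on a metric measure space is, by definition, a countable family of $p$-weak charts whose domains cover the space up to a set of measure zero. First I would invoke this definition for both factors: write the hypothesized structures on $X$ and $Y$ as countable collections of $p$-weak charts $\{(U_i,\varphi_i)\}_{i\in\N}$ of dimensions $N_i$ and $\{(V_j,\psi_j)\}_{j\in\N}$ of dimensions $M_j$, with $\mu(X\setminus\bigcup_i U_i)=0$ and $\nu(Y\setminus\bigcup_j V_j)=0$.

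Next, I would form the countable family of products
\[
\{(U_i\times V_j,\ \varphi_i\times\psi_j)\}_{(i,j)\in\N\times\N}
\]
on $X\times Y$, equipped with the product metric \eqref{eq:prod_metric} and the product measure $\mu\times\nu$. By Theorem \ref{thm:tensor-charts} applied to each pair, $(U_i\times V_j,\varphi_i\times\psi_j)$ is an $(N_i+M_j)$-dimensional $p$-weak chart on $X\times Y$ (with the pointwise norm given by the partial dual norm construction).

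It remains to verify the covering property. By Fubini, the complement of $\bigcup_{i,j} U_i\times V_j$ in $X\times Y$ is contained in
\[
\bigl((X\setminus \textstyle\bigcup_i U_i)\times Y\bigr)\ \cup\ \bigl(X\times (Y\setminus \textstyle\bigcup_j V_j)\bigr),
\]
whose $\mu\times\nu$-measure is zero by the covering properties of the two structures and $\sigma$-finiteness of $\mu$ and $\nu$ (recall both are Radon and finite on balls, hence $\sigma$-finite). Thus the family above is a countable collection of $p$-weak charts on $X\times Y$ whose domains cover $X\times Y$ up to a $\mu\times\nu$-null set, and so by definition defines a $p$-weak differentiable structure on $X\times Y$.

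I do not expect any real obstacle here: all the analytic work has been absorbed into Theorem \ref{thm:tensor-charts}, and the remaining content is the purely measure-theoretic passage from a single chart to a countable collection via Fubini. The only point to be mindful of is that the dimensions $N_i+M_j$ of the product charts vary with $(i,j)$, but this is already permitted within the notion of $p$-weak differentiable structure, which does not require uniform dimension.
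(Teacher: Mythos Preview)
Your proposal is correct and matches the paper's approach exactly: the paper presents this as an immediate corollary of Theorem~\ref{thm:tensor-charts} without writing out any details, and the argument you have spelled out (take products of charts, apply Theorem~\ref{thm:tensor-charts} to each pair, and verify the covering condition via Fubini) is precisely the intended reasoning.
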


In proving Theorem \ref{thm:tensor-charts} we will use a characterization of $p$-weak charts in terms of existence and uniqueness of differentials in the spirit of \eqref{eq:che_diff}. In the following definition, let $U\subset X$ be a Borel set with $\mu(U)>0$ and $\varphi\in N^{1,p}_{loc}(X;\R^N)$. 
\begin{defn}
 A Borel map $\bm\xi\colon U\to\R^N$ is a $p$-weak differential of a function $f\in \Ne pX$ with respect to $(U,\varphi)$, if 
\begin{align}\label{eq:differential}
(f\circ\gamma)_t'=\bm\xi_{\gamma_t}((\varphi\circ\gamma)_{t}'),\quad a.e.\ t\in \gamma\inv(U)
\end{align}
for $p$-a.e. curve $\gamma$ in $X$.
\end{defn}

Note that there is no uniqueness condition imposed in the definition above. We say that $f\in \Ne pX$ admits a \emph{unique} $p$-weak differential with respect to $(U,\varphi)$, if the map $\bm\xi$ in \eqref{eq:differential} is unique in the following sense: if $\bm\xi'\colon U\to (\R^N)^*$ is another Borel map satisfying \eqref{eq:differential} for $p$-a.e. $\gamma$, then $\bm\xi=\bm\xi'$ $\mu$-a.e. on $U$.

\begin{thm}\label{thm:equiv_lipdiff_weakchart}
	Let $U\subset X$ be a Borel set with $\mu(U)>0$ and $\varphi\colon X\to \R^N$ a Lipschitz map. The following are equivalent.
	\begin{itemize}
	    \item[(1)] $(U,\varphi)$ is a $p$-weak chart;
	    \item[(2)] every $f\in \Ne pX$ admits a unique differential $\ud f\colon U\to (\R^N)^*$ with respect to $(U,\varphi)$;
	    \item[(3)] every $f\in \LIP(X)$ admits a unique differential $\ud f\colon U\to (\R^N)^*$ with respect to $(U,\varphi)$.
	\end{itemize} 
\end{thm}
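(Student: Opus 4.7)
I will prove the cycle $(1)\Rightarrow (2)\Rightarrow (3)\Rightarrow (1)$.

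The implication $(1)\Rightarrow (2)$ is essentially the construction of $p$-weak differentials for Sobolev functions carried out in \cite{teriseb}: existence of $\ud f$ is built into the definition of a $p$-weak chart, so the content is uniqueness. This reduces to the zero-lemma that if a Borel map $\bm\xi \colon U \to (\R^N)^*$ satisfies $\bm\xi_{\gamma_t}((\varphi \circ \gamma)_t') = 0$ for $p$-a.e. curve $\gamma$ and a.e. $t \in \gamma^{-1}(U)$, then $\bm\xi = 0$ $\mu$-a.e. on $U$. This is precisely the non-degeneracy of the pointwise norm on $(\R^N)^*$ associated to the chart, so both existence and uniqueness of $\ud f$ follow from the chart property.

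The implication $(2)\Rightarrow (3)$ is a localization argument. A globally Lipschitz $f\colon X\to \R$ need not lie in $\Ne pX$ when $\mu(X)=\infty$, but for each ball $B(x_0,R)$ we multiply by a Lipschitz cutoff $\eta_R$ to obtain $\eta_R f \in \Ne pX$; applying (2) yields a unique differential $\ud(\eta_R f)$. The curvewise equation \eqref{eq:differential} gives a product rule which implies that these differentials restrict to a well-defined object on the interior of $\{\eta_R = 1\}$ and agree on overlaps as $R$ varies. Exhausting $X$ yields a unique $\ud f$ defined on all of $U$.

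The main step is $(3)\Rightarrow (1)$. For each $\xi \in (\R^N)^*$, the Lipschitz function $\xi \circ \varphi$ admits by the chain rule the constant map $\xi$ as a differential with respect to $(U,\varphi)$, and by (3) this is its \emph{unique} differential. Define the pointwise norm
\[
|\xi|_x \defeq g_{\xi\circ \varphi}(x), \quad x\in U,
\]
as the minimal $p$-weak upper gradient of $\xi \circ \varphi$. Measurability, sublinearity and positive homogeneity of $\xi \mapsto |\xi|_x$ for $\mu$-a.e. $x \in U$ follow from standard properties of minimal upper gradients applied on a countable dense subset of $(\R^N)^*$ and extended by continuity. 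The key property, and the \textbf{main obstacle} of the proof, is \emph{non-degeneracy}: if $|\xi|_x = 0$ for some nonzero $\xi$ on a set $A \subset U$ of positive measure, then $\bm\xi := \chi_A\, \xi$ would be a second differential of the zero function (besides $0$), contradicting (3) applied to $f\equiv 0$. With the pointwise norm in hand, the remaining conditions in the definition of $p$-weak chart from \cite{teriseb}---that $g_f = |\ud f|_{\cdot}$ for all Lipschitz $f$, and that the induced curve family is nontrivial---follow by applying uniqueness to a general $f \in \LIP(X)$ and comparing with the curvewise identity \eqref{eq:differential}.
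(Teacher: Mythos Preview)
Your treatment of $(1)\Rightarrow(2)$ matches the paper's: existence is \cite[Theorem~1.7]{teriseb}, and uniqueness is exactly the non-degeneracy of the pointwise seminorm, i.e.\ Theorem~\ref{thm:canonical_repr_of_grad}(c) and (d). Your localization argument for $(2)\Rightarrow(3)$ is more careful than the paper's one-word ``trivial'', but this is a minor point.

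The real issue is in $(3)\Rightarrow(1)$. By Definition~\ref{def:p-weak_chart}, a $p$-weak chart is a pair that is both \emph{$p$-independent} and \emph{$p$-maximal}. Your argument establishes $p$-independence correctly (the zero function has only the zero differential, forcing $\ker\Phi^x=\{0\}$ a.e.), but you never address $p$-maximality. The ``remaining conditions'' you list---that $g_f=|\ud f|_{\cdot}$ for Lipschitz $f$ and that some curve family is nontrivial---are not the definition used here, and in any case you do not verify them; you only assert they follow from uniqueness. Nothing in your outline rules out the possibility that some $\psi\in\LIP(X;\R^M)$ with $M>N$ is $p$-independent on a positive-measure subset $V\subset U$.

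The paper closes this gap with a dimension argument that genuinely uses hypothesis (3). Given such a $\psi$, apply (3) to each component $\psi_i$ to obtain differentials $\ud\psi_i\colon U\to(\R^N)^*$. Since $M>N$, these $M$ maps into the $N$-dimensional space $(\R^N)^*$ are pointwise linearly dependent: there exist Borel $a_i\in L^\infty(V)$ with $\sum_i a_i\,\ud\psi_i=0$ and $\mathbf a=(a_1,\ldots,a_M)\ne 0$ $\mu$-a.e.\ on $V$. The curvewise identity \eqref{eq:differential} then gives $\sum_i a_i(\gamma_t)(\psi_i\circ\gamma)_t'=0$ for a.e.\ $t\in\gamma^{-1}(V)$ and $p$-a.e.\ $\gamma$, which by Theorem~\ref{thm:canonical_repr_of_grad}(c),(d) contradicts the assumed $p$-independence of $\psi$ on $V$. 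This step is the substantive half of $(3)\Rightarrow(1)$ and is missing from your plan.
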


\subsection{Application: Tensorization problem of Sobolev spaces}
We use Theorem \ref{thm:tensor-charts} and Corollary \ref{cor:prod_has_diff_struct} to make partial progress on the ``tensorization problem'' for Sobolev spaces. The tensorization problem first appeared in \cite{ags14b} and was formulated using Sobolev spaces defined via test plans, which we denote here by $W^{1,p}$, see Definition \ref{def:sob_space}. It has later been investigated e.g. in \cite{amb-pin-spe15}. Given $p\ge 1$, the \emph{Beppo--Levi} space $J^{1,p}(X,Y)$ consists of Borel functions $f\in L^p(X\times Y)$, which satisfy the following:
\begin{itemize}
	\item[(a1)] For $\mu_X$-a.e. $x\in X$, $f_x:=f(x,\cdot)\in W^{1,p}(Y)$;
	\item[(a2)] For $\mu_Y$-a.e. $y\in Y$, $f^y:=f(\cdot,y)\in W^{1,p}(X)$, and 
	\item[(a3)] $\displaystyle \int_{X\times Y} (|Df^y|_p^p(x)+|Df_x|_p^p(y))\ud\mu(x)\ud\nu(y)<\infty.$
\end{itemize}
We refer to the Appendix for the measurability of the integrand in (a3). Observe that, although the Sobolev space $W^{1,p}$ and $N^{1,p}$ are isometrically isomorphic, it is not trivial that one can replace $W^{1,p}$ by $N^{1,p}$ in (a1) and (a2), a point we address in Section 5. The space  $J^{1,p}(X,Y)$ equipped with the norm $\|f\|_{J^{1,p}}\defeq (\|f\|_{L^p}+[f]_{J^{1,p}}^p)^{1/p}$ is a Banach space, where $[f]_{J^{1,p}}$ is the seminorm
\begin{equation}\label{eq:j1p_norm}
[f]_{J^{1,p}}=\left(\int_{X\times Y} (\|(|Df^y|_p(x),|Df_x|_p(y))\|')^p\ud\mu(x)\ud\nu(y)\right)^{1/p}.
\end{equation}
The ``tensorization problem'' of Sobolev spaces asks whether or not the equality
\begin{equation}\label{eq:tensorprob}
W^{1,p}(X\times Y)=J^{1,p}(X,Y)
\end{equation}
holds. Roughly speaking this amounts to asking whether knowledge of the directional derivatives in the $X$ and $Y$ directions is enough to ensure Sobolev regularity in $X\times Y$. While the inclusion $W^{1,p}(X\times Y)\subset J^{1,p}(X,Y)$ is elementary not much else is known without additional assumptions. 

\begin{thm}\label{thm:tensor_sob}
Suppose $X$ and $Y$ admit a $p$-weak differentiable structure and let $f\in \Ne p{X\times Y}$. Then
\begin{align*}
\ud f_{(x,y)}=(\ud_xf^y,\ud_yf_x)
\end{align*}
and
\begin{align*}
|Df|_p(x,y)=\|(|Df^y|_p(x),|Df_x|_p(y))\|'
\end{align*}
for $\mu\times\nu$-a.e. $(x,y)$. In particular, the embedding $W^{1,p}(X\times Y)\subset J^{1,p}(X,Y)$ is isometric. 
\end{thm}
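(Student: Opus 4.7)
The plan is to transfer the problem to the product differentiable structure provided by Corollary \ref{cor:prod_has_diff_struct}. Fix $p$-weak charts $(U,\varphi)$ in $X$ and $(V,\psi)$ in $Y$; by Corollary \ref{cor:prod_has_diff_struct}, $(U\times V, \varphi\times\psi)$ is a $p$-weak chart for $X\times Y$. Given $f\in N^{1,p}(X\times Y)$, Theorem \ref{thm:equiv_lipdiff_weakchart} produces a unique $p$-weak differential $\ud f = (\xi_X,\xi_Y)$ with values in $(\R^N)^*\times(\R^M)^*$, and by the pointwise norm identity of Theorem \ref{thm:tensor-charts},
\[
|Df|_p(x,y) = \|(|\xi_X(x,y)|_x, |\xi_Y(x,y)|_y)\|'
\]
for $\mu\times\nu$-a.e. $(x,y)\in U\times V$.

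The heart of the argument is to identify $\xi_X(x,y)$ with $\ud_xf^y$, and symmetrically $\xi_Y(x,y)$ with $\ud_yf_x$. This uses a Fubini-type statement for the $p$-modulus: for $\nu$-a.e. $y\in Y$, the slice $f^y$ belongs to $N^{1,p}(X)$, and if $\Gamma\subset X\times Y$ is $p$-negligible, then for $\nu$-a.e.\ $y$ the family of curves $\gamma$ in $X$ whose horizontal lift $t\mapsto(\gamma_t,y)$ lies in $\Gamma$ is $p$-negligible in $X$ (with the symmetric statement in $y$). For such a good $y$ and good $\gamma$, one has $f^y\circ\gamma = f\circ(\gamma,y)$ pointwise and hence
\[
(f^y\circ\gamma)_t' = \xi_X(\gamma_t,y)((\varphi\circ\gamma)_t') + \xi_Y(\gamma_t,y)(0) = \xi_X(\gamma_t,y)((\varphi\circ\gamma)_t')
\]
for a.e.\ $t\in\gamma^{-1}(U)$. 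Thus $x\mapsto\xi_X(x,y)$ is a $p$-weak differential of $f^y$ on $(U,\varphi)$, and the uniqueness in Theorem \ref{thm:equiv_lipdiff_weakchart} forces $\xi_X(x,y) = \ud_xf^y$ for $\mu$-a.e.\ $x\in U$. The symmetric argument gives $\xi_Y(x,y) = \ud_yf_x$.

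Substituting these identifications into the norm formula yields the pointwise equality for $|Df|_p$; covering $X$ and $Y$ by atlases of $p$-weak charts then yields the identity $\mu\times\nu$-a.e. Integrating and recalling the isometric identification $N^{1,p}=W^{1,p}$ gives
\[
\|f\|_{W^{1,p}(X\times Y)}^p = \|f\|_{L^p}^p + \int_{X\times Y}(\|(|Df^y|_p(x),|Df_x|_p(y))\|')^p\,\ud\mu(x)\ud\nu(y) = \|f\|_{J^{1,p}}^p,
\]
which is the claimed isometric embedding. The main obstacle is the Fubini-type modulus statement coupled with the measurability and Sobolev regularity of slices --- precisely the technical points the paper flags for Section~5 and the appendix. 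Once these are in hand, the rest of the proof is a clean application of the uniqueness half of Theorem \ref{thm:equiv_lipdiff_weakchart} on both the product chart and its factors.
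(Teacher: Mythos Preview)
Your proposal is correct and follows essentially the same route as the paper: pass to a product chart via Corollary~\ref{cor:prod_has_diff_struct}, write $\ud f=(\xi_X,\xi_Y)$, restrict to horizontal (resp.\ vertical) curves and use a Fubini-type modulus argument to identify $\xi_X(\cdot,y)$ as a $p$-weak differential of $f^y$, then invoke uniqueness and the pointwise norm formula from Theorem~\ref{thm:tensor-charts}. The only cosmetic difference is that the paper cites \cite[Lemma~4.5]{teriseb} for the identification step where you appeal to the uniqueness half of Theorem~\ref{thm:equiv_lipdiff_weakchart}; your flagging of the Fubini/modulus and slice-regularity issues is appropriate and matches what the paper handles (somewhat implicitly) in Section~5 and the appendix.
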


Theorem \ref{thm:tensor_sob} leaves open the question of equality in \eqref{eq:tensorprob} but shows that the elementary inclusion $W^{1,p}(X\times Y)\subset J^{1,p}(X,Y)$ is isometric (e.g. when the spaces are finite dimensional), providing partial evidence in favour of the tensorization property.

While a full solution to the tensorization problem remains open, we are able to establish \eqref{eq:tensorprob} under the additional assumption that one of the factors is a PI-space. Recall that a $p$-PI space is a complete doubling metric measure space supporting a weak $p$-Poincar\'e inequality, cf. \cite{HKST07}.

\begin{thm}\label{thm:Onefactor} Suppose that $X$ is a $p$-PI-space and $Y$ admits a $p$-weak differential structure. Then $W^{1,p}(X\times Y)=J^{1,p}(X,Y)$ and the norms coincide.
\end{thm}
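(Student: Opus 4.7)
The plan is to upgrade the isometric embedding from Theorem~\ref{thm:tensor_sob} into an equality with matching norms. Since the image of that embedding is closed in $J^{1,p}(X,Y)$, it suffices to show, for each $f\in J^{1,p}(X,Y)$, that $f\in W^{1,p}(X\times Y)$ with a $p$-weak upper gradient bounded a.e.\ by the candidate
$g(x,y)\defeq \|(|Df^y|_p(x),|Df_x|_p(y))\|'$; the reverse inequality $|Df|_p\ge g$ and the $L^p$-bound on $g$ are then automatic from Theorem~\ref{thm:tensor_sob}, yielding both the missing inclusion and the norm identity.

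The core step is to verify that $g$ is a $p$-weak upper gradient of $f$. For \emph{horizontal} curves $t\mapsto (\alpha(t),y)$ and \emph{vertical} curves $t\mapsto (x,\beta(t))$ this is immediate from the defining conditions (a1)--(a2) combined with a Fubini argument for the $p$-modulus. The difficulty is to handle a generic curve $\eta=(\alpha,\beta)$ in $X\times Y$ whose components move simultaneously in both factors. Here the PI assumption on $X$ enters through a Lusin--Lipschitz approximation: one constructs a sequence $f_n$ that agrees with $f$ off a set of small $\mu\times\nu$-measure, is globally Lipschitz in the $X$-variable uniformly in $y$ with Lipschitz constant controlled by a maximal-function truncation of $|Df^y|_p$, while its $y$-slices remain in $N^{1,p}(Y)$ via the $p$-weak differentiable structure on $Y$ and Theorem~\ref{thm:equiv_lipdiff_weakchart}. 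The Lipschitz $X$-dependence combined with the Sobolev $Y$-dependence places $f_n\in N^{1,p}(X\times Y)$ (the Lipschitz control in $X$ allows any curve $\eta$ to be compared with horizontal--vertical staircase approximations whose errors vanish in the limit), and Theorem~\ref{thm:tensor_sob} applied to $f_n$ identifies its minimal upper gradient pointwise with the corresponding candidate $g_n$.

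Passing to the limit, $f_n\to f$ in $L^p(X\times Y)$ and $g_n\to g$ in $L^p(X\times Y)$ by standard Lusin--Lipschitz bounds together with dominated convergence. Lower semicontinuity of the minimal upper gradient in $N^{1,p}(X\times Y)$ then gives $f\in N^{1,p}(X\times Y)$ with $|Df|_p\le g$, and combined with Theorem~\ref{thm:tensor_sob} yields $|Df|_p=g$ $\mu\times\nu$-a.e. The principal obstacle is the asymmetric nature of the hypotheses: the PI structure is available only on the $X$-factor, so the $Y$-regularity of $f$ must be preserved through the approximation in $X$ rather than regenerated from a Poincar\'e inequality on $Y$. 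Establishing $f_n\in N^{1,p}(X\times Y)$ from this single-factor control, and matching the pointwise identification of $|Df_n|_p$ with $g_n$, requires a careful interplay between the PI-based Lipschitz truncation on $X$ and the $p$-weak chart machinery on $Y$, together with the slice measurability deferred to the Appendix.
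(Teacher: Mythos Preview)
Your high-level strategy --- approximate $f\in J^{1,p}(X,Y)$ by functions in $N^{1,p}(X\times Y)$ using the PI structure on $X$, then pass to the limit via weak compactness / lower semicontinuity --- is exactly the paper's. The difference is in \emph{which} approximation you choose, and your choice carries a genuine gap.

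You propose a Lusin--Lipschitz truncation in the $X$-variable. Two steps are not justified. First, the standard construction extends $f^y$ from the superlevel set $\{M(|Df^y|_p)\le n\}$ by a McShane-type formula; this set and the extension depend on $y$, and there is no reason the resulting map $y\mapsto f_n(x,y)$ remains in $N^{1,p}(Y)$ (or is even jointly measurable). Second, even granting that $f_n$ is uniformly Lipschitz in $X$ and that $(f_n)_x\in N^{1,p}(Y)$ for a.e.\ $x$, your assertion that this alone forces $f_n\in N^{1,p}(X\times Y)$ via ``staircase approximation'' is essentially the tensorization problem for this restricted class. Concretely: the staircase approximants to a generic curve $\gamma=(\alpha,\beta)$ are \emph{particular} HV-curves whose vertical legs live in the slices $\{\alpha_{t_i}\}\times Y$; the upper gradient inequality for $(f_n)_{\alpha_{t_i}}$ is only known along $p$-a.e.\ curve in $Y$, and the exceptional family depends on the point $\alpha_{t_i}$. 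Nothing guarantees that the fixed curve $\beta$ avoids all of these $t$-dependent null families. The uniform Lipschitz bound in $X$ controls the horizontal error but does not resolve this vertical issue.

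The paper sidesteps both problems by using a \emph{discrete convolution} in $X$ rather than a Lusin truncation: with a partition of unity $\{\psi_a^n\}$ subordinate to a $2^{-n}$-net, one sets
\[
f_n(x,y)=\sum_{a}\psi_a^n(x)\,\dashint_{B(a,2^{-n})}f(z,y)\,\ud\mu(z).
\]
Because $f$ has bounded support this is a \emph{finite} sum of products of a Lipschitz function of $x$ and a Sobolev function of $y$, so $f_n\in N^{1,p}(X\times Y)$ is immediate (this is the easy case handled by Proposition~\ref{prop:c1-comb_admits_diff}). The Poincar\'e inequality on $X$ is then used only to bound $\|\ud_X f_n\|_{L^p}$ and $\|\ud_Y f_n\|_{L^p}$ uniformly in $n$, after which reflexivity of $W^{1,p}(X\times Y)$ and Mazur's lemma (Proposition~\ref{prop:equivalentconditions}) finish the argument. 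Note in particular that the paper does \emph{not} prove $g_n\to g$ in $L^p$ or $|Df|_p\le g$ directly; it only needs $\sup_n\|f_n\|_{N^{1,p}}<\infty$, and the pointwise identity $|Df|_p=g$ is read off afterwards from Theorem~\ref{thm:tensor_sob}.
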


Previously the same conclusion was (essentially) known to hold if \emph{both} factors are PI-spaces, see \cite[Theorem 3.4]{amb-pin-spe15} for a proof in the case $p=2$.

\subsection{Acknowledgements}

The first author was partially supported by the Finnish Academy under Research postdoctoral Grant No. 330048. The second author was partially supported by the Finnish Academy, Grant No. 314789. The third author was supported by the Swiss National Science Foundation Grant 182423. The authors thank Jeff Cheeger, Nicola Gigli, Enrico Pasqualetto, and Nageswari Shanmugalingam for many enlightening discussions. 

\section{Preliminaries}

\subsection{Product spaces}
Let $X$ and $Y$ be complete separable metric spaces, and $\|\cdot\|$ a norm on $\R^2$. Throughout this paper we will use the product metric on $X\times Y$ defined by 
\begin{align*}
d((x,y),(x',y'))\defeq \|(d_X(x,x'),d_Y(y,y'))\|,\quad (x,y),(x',y')\in X\times Y.
\end{align*}
Given an absolutely continuous curve $\gamma=(\alpha,\beta)$ in $X\times Y$, the metric speed satisfies
\begin{align*}
    |(\alpha,\beta)_t'|=\|(|\alpha_t'|,|\beta_t'|)\|\quad a.e.\ t,
\end{align*}
where $|\alpha_t'|$ and $|\beta_t'|$ are the metric speeds of the curves $\alpha$ and $\beta$ with respect to the metrics $d_X$ and $d_Y$, respectively.

\emph{Horizontal} curves are curves whose $Y$-component is constant, and their collection is denoted $H([0,1];X\times Y)$. Similarly, \emph{vertical} curves have constant $X$-component, and their collection is denoted $V([0,1];X\times Y)$. In Sections 5 and 6 we will need the notion of \emph{horizontal-vertical} curves which are obtained as concatenations of horizontal and vertical curves. The formal definition is given below.

\begin{defn}\label{def:hv_curve}
 A curve $\gamma = (\alpha,\beta) \in AC([0,1];X\times Y)$ is called a \emph{hv-curve} if there exists a partition of $[0,1]$ given as $0 = t_0 < t_1 < \cdots < t_k = 1$ so that for every $i \in \{1,\dots,k\}$
 either $\alpha$ or $\beta$ is constant in $[t_{i-1},t_i]$. We denote the set of hv-curves by $HV([0,1];X\times Y)$
\end{defn}
Given a HV-curve $\gamma=(\alpha,\beta)$ its constant speed parametrization $\bar \gamma=(\bar\alpha,\bar\beta)$ is also a HV-curve with the additional property that $\bar\gamma|_I$ is non-constant on any open interval $I\subset [0,1]$ unless $\gamma$ is a constant curve. For (non-constant) $\bar\gamma$ we may find a decomposition $0<t_1\ldots<t_n<1$, with either $\alpha$ or $\beta$ constant on each $[t_{i-1},t_i]$, which is maximal in the following sense: for any $i=1,\ldots,n$ and open $I\supset [t_{i-1},t_i]$, both $\bar\alpha|_I$ and $\bar\beta|_I$ are non-constant (i.e. none of the intervals $[t_{i-1},t_i]$ can be enlarged while keeping one of the component curves constant).  By convention constant curves have an empty decomposition.
\begin{defn}\label{def:hvn_curve}
Given a curve $\gamma$, we call the unique decomposition $\bm t(\gamma)=\{t_1<\ldots < t_n\}$ as above the collection of turning times of $\gamma$. For $n\in\N$, we denote by $HV_n$ the subset of $HV$ consisting of curves $\gamma$ such that the constant speed parametrization $\bar\gamma$ has exactly $n$ turning times.
\end{defn}

\subsection{Partial upper gradients}
We refer the reader to \cite{HKST07,bjo11} for a good account of modulus, line integrals along curves, weak upper gradients, and Newton-Sobolev spaces, and omit their definitions here. 

Let $\Gamma\subset AC([0,1];X)$ be a family of curves in a metric measure space $X$, and let $p\ge 1$. We say that a Borel function $g\colon X\to [0,\infty]$ is an upper gradient of a Borel function $f\colon X\to \R$ \emph{along $\Gamma$}, if the upper gradient inequality
\begin{equation}\label{eq:ug}
|f(\gamma_t)-f(\gamma_s)|\le \int_{\gamma|_{[s,t]}} g\ud s,\quad 0\le s<t\le 1
\end{equation}
holds for every $\gamma\in \Gamma$. We moreover say that $g$ is a $p$-weak upper gradient along $\Gamma$ if \eqref{eq:ug} holds for $p$-a.e. curve  $\gamma\in \Gamma$. If $\Gamma=AC([0,1];X)$ we say that $g$ is an upper gradient (resp. $p$-weak upper gradient) of $f$. 


We record the following result which establishes the existence of minimal weak partial upper gradients. 
\begin{prop}\label{prop:minimal_partial_ug}
    Let $\Gamma\subset AC([0,1];X)$ be a family of curves and $f\colon X\to\R$ a locally integrable function which admits a $p$-weak upper gradient $g\in L^p_{loc}(\mu)$ along $\Gamma$. Then there exists a \emph{minimal} $p$-weak upper gradient $|Df|_{p,\Gamma}$ of $f$ along $\Gamma$.
\end{prop}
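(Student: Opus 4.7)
My plan is to adapt the classical existence proof for minimal $p$-weak upper gradients (cf.\ \cite{bjo11, HKST07}) to the setting of upper gradients along a prescribed curve family $\Gamma$. Let
\[
\mathcal{G} \defeq \{g \in L^p_{\loc}(\mu) : g \text{ is a $p$-weak upper gradient of $f$ along } \Gamma\},
\]
which is nonempty by hypothesis. I will verify two structural closure properties of $\mathcal{G}$ and then extract a pointwise minimal element by a standard iteration.

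The first is the lattice property: $g_1, g_2 \in \mathcal{G}$ implies $\min(g_1, g_2) \in \mathcal{G}$. For $p$-a.e.\ $\gamma \in \Gamma$ both $g_i$ are honest upper gradients along every subarc of $\gamma$ with $(g_i \circ \gamma)|\gamma'| \in L^1([0,1])$, so the classical argument applies: given $0 \le s < t \le 1$, approximate the Borel set $\gamma^{-1}(\{g_1 \le g_2\}) \cap [s,t]$ in Lebesgue measure by a finite union $J$ of open intervals, apply the upper gradient inequality with $g_1$ on $J$ and with $g_2$ on $[s,t]\setminus J$, and control the error via absolute continuity of $\int (g_1 + g_2)(\gamma_r)|\gamma'_r|\,\ud r$. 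The second is Fuglede closure: if $g_n \to g$ in $L^p_{\loc}$ with each $g_n \in \mathcal{G}$, then $g \in \mathcal{G}$. A subsequence supplied by Fuglede's lemma gives $\int_\gamma |g_{n_k} - g|\,\ud s \to 0$ outside a $p$-exceptional family, and unioning with the countably many exceptional families of the individual $g_n$ preserves the upper gradient inequality in the limit.

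With these in hand, I construct the minimizer as follows. Fix a countable exhaustion $\{B_k\}$ of $X$ by open balls with $\mu(B_k) < \infty$ and strictly positive weights $w_k$ making the weighted norm $\|g\|_w^p \defeq \sum_k w_k \int_{B_k} g^p\,\ud\mu$ finite on the reference upper gradient, and set $\alpha \defeq \inf\{\|g\|_w^p : g \in \mathcal{G}\}$. Choose $g_n \in \mathcal{G}$ with $\|g_n\|_w^p \to \alpha$ and put $h_n \defeq \min(g_1, \ldots, g_n)$; iterating the lattice step yields $h_n \in \mathcal{G}$, and since $h_n$ decreases with $h_n \le g_1$, dominated convergence gives $h_n \to h \defeq \lim h_n$ in $L^p_{\loc}$ with $\|h\|_w^p = \alpha$. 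The Fuglede step then forces $h \in \mathcal{G}$. For pointwise minimality, one last application of the lattice property: for any $g' \in \mathcal{G}$ the function $\min(h, g') \in \mathcal{G}$ has weighted norm at most $\alpha$, hence $\min(h, g') = h$ $\mu$-a.e., i.e.\ $h \le g'$ $\mu$-a.e. Setting $|Df|_{p,\Gamma} \defeq h$ gives the required minimal upper gradient, independent of the chosen exhaustion and weights.

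The main obstacle is the lattice step, which is the only genuinely non-formal ingredient. Once one checks that its curve-by-curve proof never invokes the upper gradient inequality on a curve outside $\Gamma$---which is the case, since no concatenation, reparametrization, or composition with curves outside $\Gamma$ is used---the rest of the argument is a routine convex/monotone iteration valid for any $p \ge 1$.
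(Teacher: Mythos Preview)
Your proof is correct and follows essentially the same route as the paper: the paper's proof simply records that Fuglede's lemma gives $L^p_{\loc}$-closure, that the lattice property follows from the argument in \cite[Lemma~6.3.14]{HKST07}, and that a minimal element then exists as in \cite[Theorem~6.3.20]{HKST07}. You have supplied exactly these ingredients with the details spelled out, and your closing observation that the lattice step is purely curvewise and never leaves $\Gamma$ is precisely the point that makes the standard argument transfer to the partial setting.
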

Minimality in the claim above is intended in the sense that (a) $|Df|_{p,\Gamma}$ is a $p$-weak upper gradient of $f$ along $\Gamma$, and (b) $|Df|_{p,\Gamma}\le g$ for any locally $p$-integrable $p$-weak upper gradient $g$ of $f$. 
\begin{proof}
By Fuglede's Lemma, the family of partial weak upper gradients of $f$ is closed under (local) $L^p$-convergence. The collection of partial weak upper gradients of $f$ has the lattice property by argument in the proof of \cite[Lemma 6.3.14]{HKST07}. The existence of a minimal element in the lattice follows as in \cite[Theorem 6.3.20]{HKST07}.
\end{proof}

\subsection{Plans}
A \emph{plan} on $X$ is a finite measure $\bm\eta$ on $C([0,1];X)$ concentrated on absolutely continuous curves. 
The \emph{barycenter} $\bm\eta^\#$ of $\bm\eta$ is the measure on $X$ defined by 
\[
\bm\eta^\#(E):=\int\int_0^1\chi_E(\gamma_t)|\gamma_t'|\ud t\ud\bm\eta(\gamma),\quad E\textrm{ Borel}.
\]
If $\bm\eta^\#$ is an $L^q$-function -- i.e. $\bm\eta^\#=\rho\mu$ for some $\rho\in L^p(\mu)$ -- we say $\bm\eta$ is a \emph{$q$-plan}. To define test plans, denote
\[
e_t\colon C([0,1];X)\to X,\quad \gamma\mapsto \gamma_t,
\]
for fixed $t\in [0,1]$. We say that $\bm\eta$ is a $q$-test plan, if
\begin{equation}\label{eq:test_plan}
\int\int_0^1|\gamma_t'|^q\ud t\ud\bm\eta(\gamma)<\infty,
\end{equation}
and there exists $C>0$ such that $e_{t\ast}\bm\eta\le C\mu$ for each $t\in [0,1]$. If $q=\infty$, we replace \eqref{eq:test_plan} by the requirement that $\bm\eta$ is concentrated on a family of $L$-Lipschitz curves, for some $L$. Test plans appear in the definition of the Sobolev spaces $W^{1,p}$.

\begin{defn}\label{def:sob_space}
Let $p\ge 1$ and let $q$ be the dual exponent of $p$. A function $f\in L^p(\mu)$ belongs to the Sobolev space $W^{1,p}(X)$ if there exists $g\in L^p(\mu)$ such that 
\begin{align*}
\int|f(\gamma_1)-f(\gamma_0)|\ud\bm\eta\le \int\int_\gamma g\ud s\ud\bm\eta
\end{align*}
for every $q$-test plan $\bm\eta$ on $X$.
\end{defn}
We remark that the equality $\Ne pX=W^{1,p}(X)$ holds if properly interpreted, see e.g. \cite[Theorem 2.5]{teriseb}. Next we define the restriction operation on plans.
\begin{defn}[Restriction of a plan]\label{def:restr_of_plan}
 Let $\bm\eta$ be a plan on $X$ and $t,s \in [0,1]$, $t \le s$. The \emph{restriction of $\bm\eta$ to $[0,1]$}  is defined as the plan
 \[
  \bm\eta|_{[t,s]} = e_{[t,s]\ast}\bm\eta,
 \]
where $e_{[t,s]} \colon C(I;X) \to C(I;X)$ is the \emph{restriction map} satisfying for all $\gamma \in C(I;X)$ the equality
$e_{[t,s]}(\gamma) = \tilde\gamma$ with
$\tilde\gamma(r) = \gamma((1-r)t+rs)$.
\end{defn}

We record the following lemma which can be established by elementary arguments. We omit the proof.

\begin{lemma}
 Let $\bm\eta$ be a $q$-plan and $t,s \in [0,1]$, $t \le s$. Then $\bm\eta|_{[t,s]}$ is a $q$-plan. If $\bm\eta$ is a $q$-test plan then $\bm\eta|_{[s,t]}$ is a $q$-test plan.
\end{lemma}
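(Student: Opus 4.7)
The plan is to verify the two properties directly from the definitions, using a single change of variables that handles both the barycenter and the compositions $e_r\circ e_{[t,s]}$.

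First I would record the two elementary identities that do all the work. For a fixed $\gamma\in AC([0,1];X)$, the restricted curve $\tilde\gamma(r)=\gamma((1-r)t+rs)$ is absolutely continuous with metric speed $|\tilde\gamma_r'|=(s-t)|\gamma'_{(1-r)t+rs}|$ for a.e.\ $r\in[0,1]$, and for every $r\in[0,1]$ we have $e_r\circ e_{[t,s]}=e_{(1-r)t+rs}$, so
\begin{equation*}
e_{r\ast}\bm\eta|_{[t,s]} = e_{(1-r)t+rs\,\ast}\bm\eta.
\end{equation*}
In particular $\bm\eta|_{[t,s]}$ is a finite measure on $C([0,1];X)$ concentrated on absolutely continuous curves, so it is indeed a plan.

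Next I would compute the barycenter. Applying Fubini, the speed identity above, and the change of variables $u=(1-r)t+rs$ yields, for every Borel set $E\subset X$,
\begin{align*}
\bm\eta|_{[t,s]}^\#(E)
&=\int\!\!\int_0^1\chi_E(\tilde\gamma_r)|\tilde\gamma_r'|\,\ud r\,\ud\bm\eta(\gamma)
=\int\!\!\int_0^1\chi_E(\gamma_{(1-r)t+rs})(s-t)|\gamma'_{(1-r)t+rs}|\,\ud r\,\ud\bm\eta(\gamma)\\
&=\int\!\!\int_t^s\chi_E(\gamma_u)|\gamma_u'|\,\ud u\,\ud\bm\eta(\gamma)\ \le\ \bm\eta^\#(E).
\end{align*}
Hence if $\bm\eta^\#=\rho\mu$ with $\rho\in L^q(\mu)$, then $\bm\eta|_{[t,s]}^\#$ is absolutely continuous with respect to $\mu$ with density bounded by $\rho$, and therefore lies in $L^q(\mu)$. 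This shows $\bm\eta|_{[t,s]}$ is a $q$-plan.

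For the test plan statement, the bound $e_{r\ast}\bm\eta|_{[t,s]}\le C\mu$ is immediate from the identity $e_{r\ast}\bm\eta|_{[t,s]}=e_{(1-r)t+rs\,\ast}\bm\eta$ and the corresponding bound for $\bm\eta$. To handle the integrability \eqref{eq:test_plan}, the same change of variables gives
\begin{equation*}
\int\!\!\int_0^1|\tilde\gamma_r'|^q\,\ud r\,\ud\bm\eta|_{[t,s]}(\tilde\gamma)
=(s-t)^{q-1}\int\!\!\int_t^s|\gamma_u'|^q\,\ud u\,\ud\bm\eta(\gamma)
\le (s-t)^{q-1}\int\!\!\int_0^1|\gamma_u'|^q\,\ud u\,\ud\bm\eta(\gamma)<\infty,
\end{equation*}
and in the $q=\infty$ case the $L$-Lipschitz property of $\bm\eta$-a.e.\ $\gamma$ transfers to the $(s-t)L$-Lipschitz property of $\bm\eta|_{[t,s]}$-a.e.\ $\tilde\gamma$. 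There is no real obstacle here; the only point that requires a moment of care is making sure the change of variables is applied to $\bm\eta$-a.e.\ $\gamma$ (i.e.\ on the full-measure set where the metric speed exists a.e.\ on $[0,1]$) before integrating against $\bm\eta$, which is justified by Fubini since the integrand is Borel in $(\gamma,r)$.
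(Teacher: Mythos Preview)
Your proof is correct. The paper omits the proof of this lemma, stating only that it ``can be established by elementary arguments''; your direct change-of-variables verification is precisely the kind of computation the authors had in mind.
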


In Section 5 we define the concatenation of plans which is, in a sense, an opposite operation to restricting plans.

\subsection{Disintegration} Disintegration of measures with respect to a Borel map is a far reaching generalization of Fubini's theorem. Although we will mostly apply it to plans and with respect to the evaluation map, we present a more general formulation below. The following theorem can be found in \cite[Theorem 5.3.1]{AGS08}.
\begin{thm}\label{thm:disintegration}
Let $\phi\colon X\to Y$ be a Borel map between complete separable metric spaces. Let $\bm\pi\in \mathcal P(X)$ and $\nu\defeq \phi_\ast\bm\pi$. Then there exists a $\nu$-a.e. $x\in X$ uniquely defined family of measures $\{\bm\pi_x\}\subset \mathcal P(Y)$ such that $\bm\pi_x$ is concentrated on $\phi\inv(x)$ and
\begin{align*}
\int G\ud\bm\pi=\int_X\left(\int_{\phi\inv(x)}G\ud\bm\pi_x\right)\ud\nu(x)
\end{align*}
for every Borel map $G\colon X\to [0,\infty]$.
\end{thm}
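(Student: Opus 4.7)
The plan is to construct the disintegrating measures first on a countable generating algebra via Radon--Nikodym and then extend them by Carathéodory's theorem; separability of $X$ is essential because it lets us handle countably many exceptional null sets at once. First I fix a countable Boolean algebra $\mathcal A$ of Borel subsets of $X$ generating the Borel $\sigma$-algebra. For each $A\in \mathcal A$ the set function $B\mapsto \bm\pi(A\cap \phi\inv(B))$ is a finite Borel measure on $Y$ absolutely continuous with respect to $\nu=\phi_\ast\bm\pi$, so the Radon--Nikodym theorem produces a Borel density $h_A\colon Y\to [0,1]$ satisfying
\[
\bm\pi(A\cap \phi\inv(B))=\int_B h_A\,\ud\nu
\]
for every Borel $B\subset Y$.

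Next I would show that, outside a $\nu$-null set $N\subset Y$, the assignment $A\mapsto h_A(y)$ is a countably additive pre-measure on $\mathcal A$. Each defining identity --- finite additivity for a pair $A_1,A_2\in \mathcal A$ and the monotone continuity $h_{A_n}(y)\downarrow 0$ for $A_n\downarrow \emptyset$ in $\mathcal A$ --- holds for $\nu$-a.e. $y$, and since $\mathcal A$ is countable the union of these exceptional sets remains $\nu$-null. For $y\notin N$ Carathéodory's extension theorem yields a unique Borel probability measure $\bm\pi_y$ on $X$ with $\bm\pi_y(A)=h_A(y)$ for every $A\in \mathcal A$, while for $y\in N$ we set $\bm\pi_y$ arbitrarily. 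To verify concentration on $\phi\inv(y)$, I pick a countable base $\{V_n\}$ for the topology of $Y$: the identity $\bm\pi_y(\phi\inv(V_n))=\chi_{V_n}(y)$ holds for $\nu$-a.e. $y$, hence simultaneously for all $n$ outside a possibly enlarged null set, which forces $\bm\pi_y$ to be supported on $\phi\inv(y)$. The disintegration formula then holds by construction for indicators of the form $\chi_A\chi_{\phi\inv(B)}$ with $A\in \mathcal A$, extends to bounded Borel $G$ by a monotone class argument, and to arbitrary nonnegative Borel $G$ by monotone convergence. Uniqueness is immediate: any other family $\{\bm\pi_y'\}$ must agree with $\{\bm\pi_y\}$ on $\mathcal A$ outside a single $\nu$-null set by countability, hence coincide $\nu$-a.e.

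The main obstacle will be verifying the countable-additivity step cleanly, i.e. amalgamating the exceptional null sets from a whole sequence of Radon--Nikodym identities into one $\nu$-null set while preserving the pre-measure property on $\mathcal A$. This step works precisely because $\mathcal A$ can be taken countable, which is where the separability of $X$ enters in an essential way; in a non-separable setting this approach would break down, and one would instead have to invoke a regular conditional probability theorem via standard Borel structure.
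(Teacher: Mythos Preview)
The paper does not supply a proof of this theorem; it is quoted as background from \cite[Theorem 5.3.1]{AGS08}. Your outline follows the classical Radon--Nikodym-then-extend construction and most steps are fine, but the countable-additivity step contains a genuine gap. You assert that ``the monotone continuity $h_{A_n}(y)\downarrow 0$ for $A_n\downarrow\emptyset$ in $\mathcal A$ holds for $\nu$-a.e.\ $y$, and since $\mathcal A$ is countable the union of these exceptional sets remains $\nu$-null.'' Countability of $\mathcal A$ does \emph{not} bound the number of decreasing sequences in $\mathcal A$ with empty intersection: already for the algebra of finite and cofinite subsets of $\N$ there are continuum many such sequences. Hence you cannot simply take the union of the exceptional null sets over all of them, and without this the pre-measure $A\mapsto h_A(y)$ need not extend.

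The repair uses completeness of $X$ (not merely separability). Since $\bm\pi$ is a Borel probability on a Polish space it is tight, so one may enlarge $\mathcal A$ to a countable algebra containing an increasing sequence of compact sets $K_m$ with $\bm\pi(X\setminus K_m)<2^{-m}$ and, for each $A\in\mathcal A$ and each $j\in\N$, a closed $C\subset A$ in $\mathcal A$ with $\bm\pi(A\setminus C)<2^{-j}$. These are \emph{countably many} Radon--Nikodym identities, so for $\nu$-a.e.\ $y$ the finitely additive set function $h_{(\cdot)}(y)$ is inner regular on $\mathcal A$ with respect to the compact class $\{C\cap K_m: C\in\mathcal A\text{ closed},\ m\in\N\}$. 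Inner regularity by a compact class forces $\sigma$-additivity: given $A_n\downarrow\emptyset$ and $\varepsilon>0$, pick compacts $L_n\subset A_n$ with $h_{A_n\setminus L_n}(y)<\varepsilon 2^{-n}$; then $\bigcap_n L_n=\emptyset$, so by compactness some finite intersection is empty, whence $h_{A_N}(y)<\varepsilon$. This is the step where the Polish hypothesis is genuinely used; your sketch attributes it to separability alone, which is not enough.
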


The disintegration is often used for the measure $\ud\bm\pi:=|\gamma_t'|\ud t\ud\bm\eta$ on $[0,1]\times AC([0,1];X)$ and the evaluation map $e:[0,1]\times AC([0,1];X)\to X$ given by $e(t,\gamma)=\gamma_t$, when $\bm\eta$ is a $q$-plan. This yields a family of measures $\{ \bm{\pi}_x \}$, which are $e_*\ud\bm\pi$-almost everywhere uniquely defined for $x\in X$. We call $\{ \bm{\pi}_x \}$ the \emph{disintegration of $\bm\pi$} (without reference to the map). Note here that $\bm\eta^\#=e_*\ud\bm\pi$.

\subsection{$p$-Weak differentiable structure} 

Given a Borel set $U\subset X$ of positive measure and $\varphi\in N^{1,p}_{loc}(X;\R^N)$, we say that $(U,\varphi)$ is \emph{$p$-independent} if 
\begin{align*}
\inf_{v\in D}|D(v\cdot\varphi)|_p(x)>0\quad \mu-\textrm{a.e. on }U
\end{align*}
for some (and thus any) countable dense subset $D\subset S^{N-1}$. The pair $(U,\varphi)$ is said to be \emph{$p$-maximal} if, for all Lipschitz maps $\psi\in \LIP(X;\R^M)$ with $M>N$ and Borel sets $V\subset U$ of positive measure, the pair $(V,\psi)$ is not $p$-independent.
\begin{defn}\label{def:p-weak_chart}
A pair $(U,\varphi)$ is a $p$-weak chart if it is both $p$-independent and $p$-maximal. 
\end{defn}
To describe the pointwise norm associated to $p$-weak charts we record the following result which will be useful in the sequel. In the statement $U\subset X$ is Borel and $\varphi\in N^{1,p}_{loc}(X;\R^N)$.

\begin{thm}[Lemmas 4.1--4.3 in \cite{teriseb}]\label{thm:canonical_repr_of_grad}
There exists a $q$-plan $\bm\eta$ on $X$ and a Borel set $D\subset X$ with $\mu\llcorner_D\ll \bm\eta^\#$, and $\{ \bm{\pi}_x \}$ the disintegration of $\ud\bm\pi:=|\gamma_t'|\ud t\ud\bm\eta$, such that
\begin{align*}
\Phi(x,\xi)\defeq \chi_D(x)\left\|\frac{(\varphi\circ\gamma)_t'}{|\gamma_t'|}\right\|_{L^\infty(\bm\pi_x)}
\end{align*}
defines a Borel map $X\times (\R^N)^*\to [0,\infty]$ with the following properties.
\begin{itemize}
    \item[(a)] $\Phi_\xi\defeq \Phi(\cdot,\xi)$ is a representative of $|D(\xi\circ\varphi)|_p$ for all $\xi\in (\R^N)^*$;
    \item[(b)] $\Phi^x\defeq \Phi(x,\cdot)$ is a seminorm on $(\R^N)^*$ for $\mu$-a.e. $x\in X$;
    \item[(c)] For any Borel map $\bm\xi\colon U\to(\R^N)^*$ and Borel set $V\subset X$ we have that $\Phi^x(\bm\xi_x)=0$ $\mu$-a.e. $x\in V$ if and only 
    \[
    \bm\xi_{\gamma(t)}((\varphi\circ\gamma)_t')=0\quad a.e.\ t\in \gamma\inv(V)
    \]
    for $p$-a.e. $\gamma$;
    \item[(d)] $(U,\varphi)$ is $p$-independent if and only if $\Phi^x$ is a norm for $\mu$-a.e. $x\in U$.
\end{itemize}
\end{thm}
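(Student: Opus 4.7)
The strategy is to build a single $q$-plan $\bm\eta$ that simultaneously realizes the minimal weak upper gradient of $\xi\circ\varphi$ for every $\xi$ in a countable dense set $D_0\subset (\R^N)^*$, and then extract $\Phi$ from its disintegration. To construct it, fix $D_0$ and, for each $\xi\in D_0$, produce a $q$-plan $\bm\eta_\xi$ with $\bm\eta_\xi^\#\ll \mu$ that is ``calibrated'' for $\xi\circ\varphi$, meaning
\[
\left\|\xi\!\left(\frac{(\varphi\circ\gamma)_t'}{|\gamma_t'|}\right)\right\|_{L^\infty(\bm\pi_x^\xi)}=|D(\xi\circ\varphi)|_p(x)\quad\text{for }\bm\eta_\xi^\#\text{-a.e. }x,
\]
where $\bm\pi_x^\xi$ is the disintegration of $|\gamma_t'|dt\,d\bm\eta_\xi$. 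The existence of such plans rests on the $q$-duality between minimal upper gradients and essential suprema of derivatives along plan curves, as developed in \cite{teriseb}. I would then form $\bm\eta:=\sum_k c_k\bm\eta_{\xi_k}$ with $c_k>0$ chosen so that $\bm\eta$ is a finite $q$-plan whose barycenter $\bm\eta^\#$ has an $L^1$ density $\rho$, and set $D:=\{\rho>0\}$, so that $\mu\llcorner_D\ll \bm\eta^\#$ by construction (possibly enlarging $\bm\eta$ to absorb families of curves of positive $p$-modulus where additional control is needed).

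Next I would define $\Phi(x,\xi)$ via the disintegration $\{\bm\pi_x\}$ of $d\bm\pi=|\gamma_t'|dt\,d\bm\eta$ using the displayed formula. Borel measurability in $(x,\xi)$ follows from Borel dependence of the disintegration on $x$ together with continuity and linearity of $\xi\mapsto \xi\circ(\cdot)$, which allows $\Phi$ to be written as a supremum of countably many Borel functions. For (a), the upper gradient inequality along $\bm\eta$-a.e. curve yields $\Phi_\xi\le |D(\xi\circ\varphi)|_p$ $\mu$-a.e. on $D$, while the reverse inequality on the dense set $D_0$ follows from the fact that $\bm\eta_{\xi_k}$ is a summand of $\bm\eta$, so the $L^\infty$-norm with respect to $\bm\pi_x$ dominates the calibrated one with respect to $\bm\pi_x^{\xi_k}$; continuity of both sides in $\xi$ (each being controlled by the Lipschitz constant of $\xi\circ\varphi$) extends the equality to all $\xi\in(\R^N)^*$. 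Property (b) is then immediate from subadditivity and positive homogeneity of the $L^\infty$-norm combined with linearity of $\xi\mapsto \xi\circ(\cdot)$, exhibiting $\Phi^x$ as a seminorm for $\mu$-a.e. $x$.

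For (c), the implication from the curve statement to the pointwise statement is direct, since the $\bm\pi_x$-essential supremum defining $\Phi^x(\bm\xi_x)$ is computed over $\bm\eta$-a.e. curves. Conversely, given $\Phi^x(\bm\xi_x)=0$ $\mu$-a.e. on $V$, I would integrate $|\bm\xi_{\gamma_t}((\varphi\circ\gamma)_t')|$ against $d\bm\pi$ on $\{\gamma_t\in V\}$, apply the disintegration identity, and deduce vanishing $\bm\eta$-a.e.; passage to $p$-a.e. curves uses that $\bm\eta$ was chosen maximally to detect all families of positive modulus. For (d), $p$-independence reads $\inf_{v\in D_0}\Phi_v(x)>0$ $\mu$-a.e. on $U$ by (a), which is equivalent to $\Phi^x$ being a norm on $(\R^N)^*$, since $\Phi^x$ is already a finite-dimensional seminorm and positivity on a dense subset of the unit sphere extends by continuity. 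The main obstacle is the first step: producing a single $q$-plan whose disintegration simultaneously captures all directional derivatives requires a delicate superposition balancing $q$-integrability of the barycenter against the need to universally realize minimal upper gradients, and proving equality in (a) for all $\xi$ (not just those in $D_0$) relies on uniform Lipschitz-type estimates in $\xi$.
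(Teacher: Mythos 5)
This statement is not proved in the paper you are reading: Theorem \ref{thm:canonical_repr_of_grad} is cited verbatim from \cite{teriseb} (Lemmas 4.1--4.3), so there is no internal proof to compare against. The editors simply collect the result as a black box for the sequel.

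As for your blind sketch, the overall strategy of superposing countably many calibrated $q$-plans is reasonable and is in the spirit of \cite{amb13} and \cite{teriseb}, but there are two concrete gaps beyond the ones you flag yourself. First, the reverse inequality in part (a) for $\xi_k\in D_0$ is only justified where the calibrated plan $\bm\eta_{\xi_k}$ has positive barycenter density: on the set $D\cap\{\rho_{\xi_k}=0\}$ the comparison ``the $\bm\pi_x$-essential supremum dominates the $\bm\pi_x^{\xi_k}$-essential supremum'' is vacuous because $\bm\pi_x^{\xi_k}$ is not even defined there. One must additionally show that $|D(\xi_k\circ\varphi)|_p$ vanishes $\mu$-a.e.\ on $D\setminus\{\rho_{\xi_k}>0\}$, which is a genuine duality statement (the minimal $p$-weak upper gradient is the supremum over all $q$-plans of the essential slope along that plan, so at points charged by no calibrated plan the gradient must be zero), and this cannot be waved away as ``the $L^\infty$-norm dominates.'' Second, in the converse direction of (c) the passage from ``vanishes $\bm\eta$-a.e.''\ to ``vanishes for $p$-a.e.\ curve'' is not a freebie of maximality: you need an argument that any curve family of positive $p$-modulus is seen by $\bm\eta$ with positive measure when restricted suitably, and this typically goes through a Fuglede-type exhaustion. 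You hint at this (``$\bm\eta$ was chosen maximally to detect all families of positive modulus''), but this is precisely the technical heart of \cite[Lemma 4.3]{teriseb} and should not be deferred. Finally, note that in the statement $\varphi$ is only $N^{1,p}_{loc}$, not Lipschitz, so the continuity-in-$\xi$ argument must use the pointwise bound $\Phi(x,\xi)\le |\xi|\,|D\varphi|_p(x)$ with $|D\varphi|_p\in L^p_{loc}$, rather than a global Lipschitz constant.
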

We will refer to a map $\Phi$  so that (a) in Theorem \ref{thm:canonical_repr_of_grad} holds as a \emph{canonical representative} for the gradient of $\varphi$.

\subsection{From plans to test plans} In the sequel we want to consider canonical representations of gradients of Sobolev functions arising from test plans rather than plans. To achieve this we adapt arguments in \cite[Theorem 8.5 and Theorem 9.4]{amb13}, which we present in detail here for the readers' convenience.

\begin{prop}\label{prop:plans_to_testplans}
	Let $\bm\eta$ be a $q$-plan on $X$, $q\in [1,\infty]$, and $\{ \bm{\pi}_x \}$ the disintegration of $\ud\bm\pi:=|\gamma_t'|\ud t\ud\bm\eta$. Then there exists a $q$-test plan $\bm{\bar\eta}$ on $X$ such that
	\begin{equation}\label{eq:mutual_abs_cont}
		\bm\eta^\#\ll \bm{\bar\eta}^\#\ll \bm\eta^\#
	\end{equation}
	and the disintegration $\{ \bm{\bar\pi}_x \}$ of $\bm{\bar\pi}:=|\gamma_t'|\ud t\ud \bm{\bar\eta}$ satisfies
	\begin{align}\label{eq:invariant_L_infty}
		\left\|\frac{(f\circ\gamma)_t'}{|\gamma_t'|}\right\|_{L^{\infty}(\bm\pi_x)}= \left\|\frac{(f\circ\gamma)_t'}{|\gamma_t'|}\right\|_{L^{\infty}(\bm{\bar\pi}_x)}\quad \bm\eta^\#-a.e.\ x\in X,\quad f\in \Ne pX.
	\end{align}
\end{prop}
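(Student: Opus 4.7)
The plan is to upgrade the $q$-plan $\bm\eta$ to a $q$-test plan $\bm{\bar\eta}$ by averaging restrictions of its curves in a way that preserves the directional data at each point. The key identity underlying the construction is that for any restriction $R_{s,t}\gamma\defeq e_{[s,t]}\gamma$, with $\tau(r)\defeq (1-r)s+rt$ and $f\in \Ne pX$,
\begin{align*}
\frac{(f\circ R_{s,t}\gamma)_r'}{|(R_{s,t}\gamma)_r'|}=\frac{(f\circ\gamma)_{\tau(r)}'}{|\gamma_{\tau(r)}'|}\quad \text{a.e. } r,
\end{align*}
so the ratio whose $L^\infty$-norm must be preserved is already invariant under restriction. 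The same identity also shows that the ratio is invariant under arbitrary monotone reparametrizations.

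First I would reparametrize each curve of $\bm\eta$ by constant speed; by the invariance above, this affects neither $\bm\eta^\#$ nor the norms appearing in \eqref{eq:invariant_L_infty}, so we may assume $\bm\eta$ is supported on constant-speed curves. Next, decompose $\bm\eta=\sum_n\bm\eta_n$ countably, each $\bm\eta_n$ supported on constant-speed curves with speed in some interval $[\ell_n,L_n]$, $\ell_n>0$, and whose images lie in a set $A_n\subset X$ on which $\rho\defeq d\bm\eta^\#/d\mu$ is bounded; since $\rho\in L^p(\mu)$ this can be arranged so that $\bm\eta_n^\#\le C_n\mu$ with $C_n<\infty$. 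For each such $n$ and each $k\in\mathbb{N}$ set $\epsilon_k\defeq 2^{-k}$, and define
\begin{align*}
\bm{\bar\eta}_{n,k}\defeq (\Psi_{\epsilon_k})_*\bigl(\chi_{[\epsilon_k,1-\epsilon_k]}(\tau)|\gamma_\tau'|\,d\tau\otimes d\bm\eta_n\bigr),\quad \Psi_\epsilon(\tau,\gamma)\defeq R_{\tau-\epsilon,\tau+\epsilon}\gamma,
\end{align*}
and let $\bm{\bar\eta}=\sum_{n,k}c_{n,k}\bm{\bar\eta}_{n,k}$ for summable positive weights $c_{n,k}$ chosen so that $\sum c_{n,k}C_n<\infty$ and so that the $q$-moment $\sum c_{n,k}(2\epsilon_k L_n)^q L_n|\bm\eta_n|$ is finite.

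A direct change of variables $\tau'=\tau+(2r-1)\epsilon_k$, together with the constant speed assumption, yields $e_{r*}\bm{\bar\eta}_{n,k}\le\bm\eta_n^\#\le C_n\mu$ for every $r\in[0,1]$; summing gives $e_{r*}\bm{\bar\eta}\le C\mu$, and the explicit bound $|(\Psi_{\epsilon_k}(\tau,\gamma))_r'|=2\epsilon_k|\gamma_{\tau+(2r-1)\epsilon_k}'|\le 2\epsilon_k L_n$ delivers the $q$-moment condition, so $\bm{\bar\eta}$ is a $q$-test plan. The same change of variables expresses $\bm{\bar\eta}_{n,k}^\#$ as $g_{n,k}\,\bm\eta_n^\#$ with $g_{n,k}$ strictly positive on its support, proving \eqref{eq:mutual_abs_cont}.

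For \eqref{eq:invariant_L_infty} I would disintegrate along evaluation and invoke the key identity: the fibre $\{(r,\sigma):\sigma_r=x\}$ of $\bm{\bar\pi}$ coming from $\bm{\bar\eta}_{n,k}$ is the $\Psi_{\epsilon_k}$-image of $\{(r,\tau,\gamma):\gamma_{\tau+(2r-1)\epsilon_k}=x,\ \tau\in[\epsilon_k,1-\epsilon_k]\}$, on which the ratio equals $(f\circ\gamma)_{\tau+(2r-1)\epsilon_k}'/|\gamma_{\tau+(2r-1)\epsilon_k}'|$; reparametrizing $\tau'=\tau+(2r-1)\epsilon_k$ identifies this with a weighted subset of the $\bm\pi_x$-fibre with strictly positive weight, giving $\|\cdot\|_{L^\infty(\bm{\bar\pi}_x)}\le \|\cdot\|_{L^\infty(\bm\pi_x)}$. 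For the reverse inequality, the intervals $[2\epsilon_k,1-2\epsilon_k]$ exhaust $(0,1)$ as $k\to\infty$, so every pair $(\tau,\gamma)$ contributing to $\bm\pi_x$ with $\tau\in(0,1)$ is represented in some $\bm{\bar\eta}_{n,k}$. The main obstacle I expect is the balancing of several competing requirements in the decomposition: the cutoffs $\chi_{[\epsilon_k,1-\epsilon_k]}$ are needed to keep $e_{r*}$-compressions bounded uniformly in $r$, yet they discard directions near the endpoints of curves, forcing the additional sum over $k\in\mathbb N$ with $\epsilon_k\to 0$; the weights $c_{n,k}$ and the decomposition into $\bm\eta_n$ must then be arranged so that test-plan-summability, barycenter mutual absolute continuity, and the $L^\infty$ saturation all hold simultaneously.
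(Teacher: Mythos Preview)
Your overall plan---exploiting the reparametrization invariance of the ratio $(f\circ\gamma)_t'/|\gamma_t'|$ and then averaging restrictions to force bounded compression---matches the paper's strategy. The gap is in your decomposition step. You assert that $\bm\eta=\sum_n\bm\eta_n$ with each $\bm\eta_n$ supported on curves whose image lies in a sublevel set $A_n=\{\rho\le C_n\}$ of the barycenter density $\rho=\ud\bm\eta^\#/\ud\mu$ (incidentally $\rho\in L^q$, not $L^p$). But a $q$-plan can put all its mass on curves along which $\rho$ is essentially unbounded, so that no curve lies in any $A_n$. Concretely, take $X=[0,1]$ with Lebesgue measure and $\bm\eta=\delta_\gamma$ for the finite-length curve obtained by concatenating excursions $0\to 2^{-k}\to 0$, $k\ge 1$. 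The occupation density is $2k$ on $(2^{-k-1},2^{-k}]$, hence lies in every $L^q$ but is unbounded near $0$; the image $\gamma([0,1])$ is contained in no $\{\rho\le C\}$, your decomposition loses the entire plan, and the chain $e_{r*}\bm{\bar\eta}_{n,k}\le \bm\eta_n^\#\le C_n\mu$ is vacuous.

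The paper replaces your decomposition by a second reparametrization. After passing to constant speed it sets $h_1=1/\max\{1,\rho_1\}$, where $\rho_1$ is the density of the \emph{parametric} barycenter $e_\ast(\ud t\,\ud\bm\eta_1)$, and time-changes each curve by $\tau_\gamma(s)=b_\gamma(s)/b_\gamma(1)$ with $b_\gamma(s)=\int_0^sh_1(\gamma_t)\ud t$. This slows curves down precisely where the density is large and produces a plan $\bm\eta_2$ with parametric barycenter in $L^\infty$ and finite $q$-energy. Only then does the paper take a single restriction average $\bm\eta_\eps=\eps^{-1}\int_0^\eps r_{\tau*}\bm\eta_2\,\ud\tau$; no countable sum over scales $\epsilon_k$ is needed, because the $L^\infty$ bound on the parametric barycenter already gives $e_{t*}\bm\eta_\eps\le \eps^{-1}\rho_2\mu$ uniformly in $t$. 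The invariance you identified then transfers the $L^\infty$ norms through both the reparametrization and the restriction average. Your restriction-averaging idea is correct; what is missing is a mechanism, like this time change, that bounds the density without discarding curves.
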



\begin{proof}
	First we obtain a plan with parametric barycenter in $L^\infty$ by a suitable parametrization of $\bm\eta$-a.e. curve (see also \cite[Theorem 8.5]{amb13}). Note that if $q=\infty$ the $q$-plan $\bm\eta$ already has parametric barycenter in $L^\infty$ and thus we assume that $q<\infty$.
	
	Let $\bm\eta_1:=\frac{\ell}{1+\ell}\bm\eta_l$, where $\bm\eta_l =l_*\bm\eta$ (with $l\colon C([0,1];X)\to C([0,1];X)$ the constant speed parametrization map). Observe that $\bm\eta^\#\ll (\bm\eta_1)^\#\le\bm\eta^\#$. The \emph{parametric barycenter} $\nu_1:=e_\ast(\ud t\ud\bm\eta_1)$ satisfies $\bm\eta^\#\ll \nu_1\le \bm\eta^\#$ so that $\bm\eta_1$ has parametric barycenter in $L^q$. Denote by $\rho_1$ the density of $\nu_1$ with respect to $\mu$ and set
	\[
	h_1=\frac{1}{\max\{1,\rho_1\}}.
	\]
	For $\gamma\in AC([0,1];X)$, define
	\begin{align*}
		b_\gamma(s):=\int_0^sh_1(\gamma_t)\ud t,\quad \tau_\gamma(s)=\frac{b_\gamma(s)}{b_\gamma(1)},\quad s\in [0,1].
	\end{align*}
	For $\bm\eta_1$-a.e. $\gamma$ we have that $0<b_\gamma(1)\le 1$ and $(\tau_\gamma)'(s)=\frac{h_1(\gamma_s)}{b_\gamma(1)}>0$ a.e. $s\in [0,1]$. For such $\gamma$ it follows that the inverse $\sigma_\gamma\colon [0,1]\to [0,1]$ of $\tau_\gamma$ is absolutely continuous. Moreover, the maps $\gamma\mapsto b_\gamma$ and $H:=\gamma\mapsto \gamma\circ\sigma_\gamma$ are Borel, cf. \cite[Lemma 8.4]{amb13}.
	
	Define $\ud\bm\eta_2:=\frac{b_\gamma(1)}{(1+\ell(\gamma))^q} H_\ast(\ud\bm\eta_1)$. By Lemma \ref{lem:invariance} $\bm\eta_2^\#$ and $\bm\eta_1^\#$ are mutually absolutely continuous since $\frac{b_\gamma(1)}{(1+\ell(\gamma))^q}>0$ $\bm\eta_1$-a.e.. For $\bm\eta_1$-a.e. $\gamma$ and any Borel map $g\colon X\to [0,\infty]$ we have that
	\begin{align*}
		\int_0^1g((\gamma\circ\sigma_\gamma)_t)\ud t=\int_0^1g(\gamma_s)(\tau_\gamma)'(s)\ud s=\frac{1}{b_\gamma(1)}\int_0^1g(\gamma_s)h_1(\gamma_s)\ud s.
	\end{align*}
	Thus, $\nu_2:=e_\ast(\ud t\ud\bm\eta_2)$ 
	satisfies
	\begin{align*}
		\int g\ud\nu_2\le\int \int_0^1g((\gamma\circ\sigma_\gamma)_t)\ud t\ud\bm\eta_1=\int\int_0^1g(\gamma_s)h_1(\gamma_s)\ud s\ud\bm\eta_1=\int gh_1\rho_1\ud\mu.
	\end{align*}
	It follows that $\ud\nu_2\le h_1\rho_1\ud\mu$. In particular $\frac{\ud \nu_2}{\ud\mu}\in L^\infty(\mu)$ and $\nu_1\ll\nu_2\ll\nu_1$. Similarly, for $\bm\eta_1$-a.e. $\gamma$ we have that 
	\begin{align*}
		\int_0^1|(\gamma\circ \sigma_\gamma)'_t|^q\ud t & =\int_0^1 \sigma_\gamma'(t)^q|\gamma'_{\sigma_\gamma(t)}|^q\ud s=\int_0^1\sigma_\gamma'(t)^{q-1}|\gamma_s'|^q\ud s\\
		& =  b_\gamma(1)^{q-1}\int 1\vee\rho_1^{q-1}(\gamma_s)|\gamma_s'|^q\ud s=[b_\gamma(1)\ell(\gamma)]^q\int_0^11\vee\rho_1^{q-1}(\gamma_s)\ud s.
	\end{align*}
	Thus,
	\begin{align*}
		E_q(\bm\eta_2)=&\int\int_0^1|\gamma_t'|^q\ud t\ud\bm\eta_2 = \int\left(\frac{b_\gamma(1)\ell(\gamma)}{1+\ell(\gamma)}\right)^q\int_0^1(1\vee\rho_1)^{q-1}(\gamma_s)\ud s\ud\bm\eta_1(\gamma)\\
		\le & \int\int_0^1\ud s\ud\bm\eta_1+\int\int_0^1\rho_1^{q-1}\ud\ud\bm\eta_1=\bm\eta_1(C([0,1];X))+\int\rho_1^{q}\ud\mu<\infty.
	\end{align*}
	We conclude that $\bm\eta_2$ is a plan with finite $q$-energy and its parametric barycenter $\ud\nu_2=:\rho_2\ud\mu$ satisfies $\rho_2\in L^\infty(\mu)$ and $\bm\eta^\#\ll\nu_2\ll\bm\eta^\#$. A repeated application of Lemma \ref{lem:invariance} implies that
	\begin{equation}\label{eq:2_invariance}
		\|G\|_{L^\infty((\bm\pi_2)_x)}=\|G\|_{L^\infty(\bm\pi_x)}\quad \bm\eta^\#-a.e. \ x\in X
	\end{equation}
	for every $G$ satisfying \eqref{eq:param_invariance}.
	
	We now modify $\bm\eta_2$ to obtain a test plan with the desired properties (cf. \cite[Theorem 9.4]{amb13}). Fix $\eps\in (0,1)$ and given $\tau\in [0,\eps]$, let $r_\tau\colon [0,1]\to [0,1]$, $r_\tau(t)=\tau+(1-\eps)t$. Set
	\begin{align*}
		\bm\eta_\eps:=\frac 1\eps\int_0^\eps r_{\tau\ast}\bm\eta_2\ud\tau.
	\end{align*}
	We claim that $\bm\eta_\eps$ is a test plan. Indeed,
	\begin{align*}
		E_q(\bm\eta_\eps)=\frac 1\eps\int_0^\eps\int\int_0^1|(\gamma\circ r_\tau)'_t|^q\ud t\ud\bm\eta_2\ud\tau=\frac 1\eps\int_0^\eps\int \int_\tau^{1+\tau-\eps}(1-\eps)^{q-1}|\gamma_s'|^q\ud s\ud\bm\eta_2\ud\tau\le E_q(\bm\eta_2),
	\end{align*}
	proving that $\bm\eta_\eps$ has finite $q$-energy. For any Borel $g\colon X\to [0,\infty]$ and $t\in [0,1]$ we may calculate
	\begin{align*}
		\int_X g e_{t\ast}(\ud\bm\eta_\eps) &=\frac 1\eps\int_0^\eps\int g((\gamma\circ r_{\tau})_t)\ud\bm\eta_2\ud \tau=\frac 1\eps \int \int_0^\eps g(\gamma(\tau+(1-\eps)t))\ud\tau\ud\bm\eta_2\\
		&= \frac{1}{\eps}\int\int_{(1-\eps)t}^{\eps+(1-\eps)t}g(\gamma_s)\ud s\ud\bm\eta_2\le \frac{1}{\eps}\int_X g\rho_2\ud\mu.
	\end{align*}
	Thus $\bm\eta_\eps$ is a $q$-test plan. 
	
	It remains to show that $\bm\eta_\eps$ satisfies \eqref{eq:mutual_abs_cont} and \eqref{eq:invariant_L_infty} (for any choice of $\eps>0$). Denote $\ud\bm\pi_\eps=|\gamma_t'|\ud t\ud\bm\eta_\eps$. For any Borel $G\colon C([0,1];X)\times [0,1]\to [0,\infty]$ we have
	\begin{align*}
		\int G\ud\bm\pi_\eps&=\frac 1\eps\int_0^\eps\int \int_0^1G(\gamma\circ r_\tau,t)|(\gamma\circ r_\tau)_t'|\ud t\ud\bm\eta_2\ud\tau\\
		&=\frac 1\eps\int_0^\eps\int\int_\tau^{1+\tau-\eps}G(\gamma\circ r_\tau,r_\tau\inv(s))|\gamma_s'|\ud s\ud\bm\eta_2\ud\tau\\
		&= \int R_\eps G\ud\bm\pi_2,
	\end{align*}
	where
	\begin{align*}
		R_\eps G(\gamma,s):=\frac 1\eps\int_0^\eps\chi_{[\tau,1-\eps+\tau]}(s)G(\gamma\circ r_\tau,r_\tau\inv(s))\ud\tau=\frac 1\eps\int_0^{\eps}\chi_{[s-(1-\eps),s]}(\tau)G(\gamma\circ r_\tau,r_\tau\inv(s))\ud\tau.
	\end{align*}
	If $G$ satisfies \eqref{eq:param_invariance} for all $r_\tau\colon [0,1]\to [0,1]$, $0\le \tau\le \eps$, we obtain
	\begin{align*}
		R_\eps G(\gamma,s)= m_\eps(s)G(\gamma,s), \quad m_\eps(s)=\frac{|[0,\eps]\cap [s-(1-\eps),s]|}{\eps}.
	\end{align*}
	In particular, choosing $G(\gamma,s):=g(\gamma_s)|\gamma_s'|$ for arbitrary Borel $g\colon X\to [0,1]$, we obtain $\ud\bm\eta_\eps^\#=\lambda_\eps\ud\bm\eta_2^\#$ where $\displaystyle \lambda_\eps(x):=\int m_\eps(s)\ud\bm(\pi_2)_x>0$ for $\bm\eta_2^\#$-a.e. $x$. This proves \eqref{eq:mutual_abs_cont}.
	
	The  disintegration $\{ (\bm\pi_\eps)_x \}$ of $\bm\pi_\eps$ satisfies
	\begin{align*}
		\int G\ud(\bm\pi_\eps)_x =\frac{1}{\lambda_\eps(x)}\int R_\eps G\ud(\bm\pi_2)_x\quad \bm\eta^\#-a.e. \ x\in X.
	\end{align*}
	Thus, for $G$ satisfying \eqref{eq:param_invariance} for all $\{r_\tau\colon [0,1]\to [0,1]\}_{\tau\in [0,\eps]}$ we obtain
	\begin{align*}
		\left(\int G^m\ud(\bm\pi_\eps)_x\right)^{1/m} =\left(\frac{1}{\lambda_\eps(x)}\int m_\eps(s) G^m\ud(\bm\pi_2)_x\right)^{1/m} \quad\textrm{for all }m\in \N\quad \bm\eta^\#-a.e.\ x\in X
	\end{align*}
	and letting $m\to \infty$ we obtain
	\begin{align}\label{eq:eps_invariance}
		\|G\|_{L^\infty((\bm\pi_\eps)_x)}=\|G\|_{L^\infty((\bm\pi_2)_x)}\quad \bm\eta^\#-a.e.\ x\in X.
	\end{align}
	
	Finally, given $f\in \Ne pX$ consider $\displaystyle G(\gamma,s)=\frac{(f\circ\gamma)_s'}{|\gamma_s'|}$ and observe that $G$ satisfies \eqref{eq:param_invariance} for every absolutely continuous injection $\sigma\colon [0,1]\to [0,1]$. Now combining \eqref{eq:2_invariance} and \eqref{eq:eps_invariance} we obtain \eqref{eq:invariant_L_infty}. This finishes the proof by choosing e.g. $\bm{\bar\eta}:=\bm\eta_{1/2}$. 	
\end{proof}

\section{Existence of $p$-weak differentials}
We lay some groundwork for the proof of Theorem \ref{thm:equiv_lipdiff_weakchart}. Fix a pair $(U,\varphi)$, and let $\Phi\colon X\times (\R^N)^*\to [0,\infty]$ canonically represent the gradient of $\varphi$. For $\mu$-a.e. $x\in U$ we denote by
\[
|\xi|_x=\Phi(x,\xi),\quad \xi\in (\R^N)^*
\]
the pointwise seminorm, and write $W_x:=(\R^N)^*/\{ \xi:|\xi|_x=0\}$. We also let $L_x\colon W_x\to (\R^N)^*$ denote the right inverse of the canonical projection map $[\cdot]\colon(\R^N)^*\to W_x$, given by sending each $[\xi]\in W_x$ (which is an affine subspace of $(\R^N)^*$) to the unique vector $\zeta\in [\xi]$ with smallest Euclidean norm. Note that $|\cdot|_x$ is a norm on ${\rm Im}(L_x)$.

Define the vector space $\Gamma_p(T^*U)$ as the set of Borel maps $\bm\xi\colon U\to (\R^N)^*$ with $\bm\xi(x)\in {\rm Im}(L_x)$ $\mu$-a.e. such that
\begin{align*}
\|\bm\xi\|_{\Gamma_p(U)}\defeq \left( \int_U|\bm\xi|\ud\mu \right)^{1/p}
\end{align*}
is finite, with the usual identification of elements that agree $\mu$-a.e. It is a standard exercise to show that $\|\cdot\|_{\Gamma_p}$ is a norm making $\Gamma_p(T^*U)$ a Banach space. Moreover we let $\Gamma_{p,loc}(T^*U)$ the collection of all Borel maps $\bm\xi:U\to (\R^N)^*$ so that $\bm\xi|_{K}\in \Gamma_p(T^*K)$ for all compact $K\subset U$. 

\begin{lemma}\label{lem:diff_basic}
Suppose $\bm\xi\colon U\to(\R^N)^*$ is a $p$-weak differential of $f\in \Ne pX$ with respect to $(U,\varphi)$. Then
\begin{itemize}
    \item[(1)] $\bm\xi':=L[\bm\xi]$ is a $p$-weak differential of $f$ with respect to $(U,\varphi)$;
    \item[(2)] We have that $|\bm\xi|=|Df|_p$ $\mu$-a.e. on $U$.
\end{itemize}
\end{lemma}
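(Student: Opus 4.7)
The plan is to handle (1) by invoking the characterization of pointwise vanishing in Theorem \ref{thm:canonical_repr_of_grad}(c), and then deduce (2) by identifying $|\bm\xi_x|_x$ with the pointwise $L^\infty$-representation of $|Df|_p$ along the disintegration $\{\bm\pi_x\}$.

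\emph{For (1):} By construction of $L$, for $\mu$-a.e.\ $x \in U$ one has $[\bm\xi'_x] = [L_x[\bm\xi_x]] = [\bm\xi_x]$ in $W_x$, so the difference $\bm\xi_x - \bm\xi'_x$ lies in $\ker[\cdot] = \{\xi : |\xi|_x = 0\}$. Equivalently, $\Phi^x(\bm\xi_x - \bm\xi'_x) = 0$ for $\mu$-a.e.\ $x \in U$. Theorem \ref{thm:canonical_repr_of_grad}(c), applied to the Borel map $\bm\xi - \bm\xi'\colon U\to(\R^N)^*$ with $V=U$, then yields
\[
(\bm\xi_{\gamma_t} - \bm\xi'_{\gamma_t})((\varphi\circ\gamma)_t') = 0 \quad \text{a.e. } t \in \gamma\inv(U)
\]
for $p$-a.e.\ $\gamma$. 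Combined with \eqref{eq:differential} for $\bm\xi$, this shows that $\bm\xi'$ satisfies \eqref{eq:differential} and hence is itself a $p$-weak differential of $f$.

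\emph{For (2):} Let $\bm\eta$, $\ud\bm\pi=|\gamma_t'|\ud t\ud\bm\eta$, and $\{\bm\pi_x\}$ be as in Theorem \ref{thm:canonical_repr_of_grad}. By the canonical representation, for $\mu$-a.e.\ $x\in U$ and every fixed $\xi\in(\R^N)^*$,
\[
|\xi|_x = \Phi(x,\xi) = \left\|\frac{\xi((\varphi\circ\gamma)_t')}{|\gamma_t'|}\right\|_{L^\infty(\bm\pi_x)},
\]
and the same argument applied to $f$ (as in \cite{teriseb}) yields the pointwise representation
\[
|Df|_p(x) = \left\|\frac{(f\circ\gamma)_t'}{|\gamma_t'|}\right\|_{L^\infty(\bm\pi_x)} \quad \mu\text{-a.e. } x\in U.
\]
Since $\bm\pi_x$ is concentrated on pairs $(\gamma,t)$ with $\gamma_t = x$, a Fubini-type argument along the disintegration (using that the $q$-plan $\bm\eta$ does not charge $p$-null families of curves) transfers \eqref{eq:differential} into the $\bm\pi_x$-a.e.\ statement
\[
(f\circ\gamma)_t' = \bm\xi_x((\varphi\circ\gamma)_t') \quad \text{for } \bm\pi_x\text{-a.e. } (\gamma,t),\ \mu\text{-a.e. } x\in U.
\]
Substituting into the formula for $|Df|_p$ then gives
\[
|Df|_p(x) = \left\|\frac{\bm\xi_x((\varphi\circ\gamma)_t')}{|\gamma_t'|}\right\|_{L^\infty(\bm\pi_x)} = \Phi(x,\bm\xi_x) = |\bm\xi_x|_x,
\]
as required.

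The principal technical delicacy is the last transfer: passing from the ``$p$-a.e.\ curve, a.e.\ $t$'' formulation of \eqref{eq:differential} to a $\bm\pi_x$-a.e.\ statement parametrized by $\mu$-a.e.\ $x\in U$. This is a standard but slightly involved bookkeeping step that combines the absolute continuity $\mu\llcorner_D\ll\bm\eta^\#$ from Theorem \ref{thm:canonical_repr_of_grad}, the fact that $q$-plans test $p$-modulus null sets, and the disintegration formula for $\ud\bm\pi$.
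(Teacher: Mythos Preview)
Your proof of (1) is the same as the paper's: both note that $|\bm\xi-\bm\xi'|_x=0$ $\mu$-a.e.\ and invoke Theorem~\ref{thm:canonical_repr_of_grad}(c) (which restates \cite[Lemma 4.3(2)]{teriseb}) to conclude $\bm\xi_{\gamma_t}((\varphi\circ\gamma)_t')=\bm\xi'_{\gamma_t}((\varphi\circ\gamma)_t')$ along $p$-a.e.\ curve.

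For (2) the routes differ. You feed the differential identity into the formula $|Df|_p(x)=\|(f\circ\gamma)_t'/|\gamma_t'|\|_{L^\infty(\bm\pi_x)}$. This is correct, but note that it requires the \emph{same} disintegration $\{\bm\pi_x\}$ from Theorem~\ref{thm:canonical_repr_of_grad} to represent $|Df|_p$ for an arbitrary $f\in\Ne pX$, not only for the functions $\xi\circ\varphi$; ``the same argument applied to $f$'' would a priori produce a different plan. That one plan works universally is indeed established in \cite{teriseb}, so your argument goes through, but it invokes more than Theorem~\ref{thm:canonical_repr_of_grad} as stated here. The paper's route is more self-contained: set $g\defeq\chi_U|\bm\xi|+\chi_{X\setminus U}|Df|_p$, use the differential identity together with the definition of $\Phi$ to obtain $|\bm\xi|\le |Df|_p$ on $U$ (so $g\le |Df|_p$), and then observe that the same identity forces $|(f\circ\gamma)_t'|\le g(\gamma_t)|\gamma_t'|$, making $g$ a $p$-weak upper gradient and hence $|Df|_p\le g$. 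This two-sided comparison needs only the definition of $\Phi$ and the minimality of the weak upper gradient, not the full $L^\infty(\bm\pi_x)$-representation of $|Df|_p$.
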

\begin{proof}
Let $g_f$ be a Borel representative of $|Df|_p$ and define the Borel function $g=\chi_U|\bm\xi|+\chi_{X\setminus U}g_f$. The identity \eqref{eq:differential} implies that
\[
|\bm\xi((\varphi\circ\gamma)_t')|\le g_f(\gamma_t)|\gamma_t'|\quad a.e.\ t\in \gamma\inv(U)
\]
for $p$-a.e. curve. By the definition of $\Phi$ this yields $|\bm\xi(x)|_x\le g_f(x)$ $\mu$-a.e. $x\in U$. Thus $g\le g_f$. The identity \eqref{eq:differential} also yields that for $p$-a.e. curve $\gamma$ we have 
\[
|(f\circ\gamma)'_t|=|\bm\xi_{\gamma(t)}((\varphi\circ\gamma)'_t)|\le g(\gamma_t)|\gamma_t'|
\]
for a.e. $t\in\gamma\inv(U)$. 
Thus $g$ is a $p$-weak upper gradient, and $g\geq g_f$ and (2) follows. To prove (1) note that, since $|\bm\xi'-\bm\xi|=0$ $\mu$-a.e. on $U$ by definition, it follows that $\bm\xi((\varphi\circ\gamma)_t'))=\bm\xi'((\varphi\circ\gamma)_t'))$ a.e. $t\in \gamma\inv(U)$ for $p$-a.e. $\gamma$, cf. \cite[Lemma 4.3(2)]{teriseb}. The claim in (1) follows directly from this.
\end{proof}

\begin{lemma}\label{lem:piecewisegood}
    Let $f\in \Ne pX$. Assume that there exist $f_i\in \Ne pX$, $C_i\subset X$, for each $i\in I$ in a countable index set $I$ such that (1) $f_i$ admits a $p$-weak differential with respect to $(U,\varphi)$ and $f|_{C_i}=f_i|_{C_i}$ for each $i\in I$, and (2) $\displaystyle \mu\left(U\setminus \bigcup_{i\in I}C_i\right)=0$. Then $f$ admits a $p$-weak differential with respect to $(U,\varphi)$. 
\end{lemma}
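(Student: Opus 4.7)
\medskip
\noindent\textbf{Proof plan.} The plan is to define $\bm\xi$ as a piecewise combination of the given $\bm\xi_i$'s on a disjoint refinement of the cover $\{C_i\}$, and verify \eqref{eq:differential} using the locality of derivatives along curves. First, replace each $C_i$ by $C_i' = (C_i \cap U) \setminus \bigcup_{j<i} C_j$, so that $\{C_i'\}$ are pairwise disjoint Borel subsets of $U$ with $\mu(U \setminus \bigsqcup_i C_i') = 0$. Set
\[
\bm\xi(x) = \sum_i \chi_{C_i'}(x)\, \bm\xi_i(x), \quad x \in U,
\]
which is a Borel map (extending by $0$ on the $\mu$-null set $E := U \setminus \bigsqcup_i C_i'$).

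The key step is the following locality claim: for each $i$ and $p$-a.e. curve $\gamma$,
\[
(f \circ \gamma)'_t = (f_i \circ \gamma)'_t \quad \text{for a.e. } t \in \gamma^{-1}(C_i).
\]
Indeed, $h := f - f_i \in N^{1,p}(X)$ vanishes on $C_i$, so for $p$-a.e.\ $\gamma$ the function $h \circ \gamma$ is absolutely continuous on $[0,1]$ and vanishes on the Borel set $\gamma^{-1}(C_i)$; the Lebesgue density theorem, applied to this AC function, forces $(h\circ\gamma)'_t = 0$ at a.e.\ $t \in \gamma^{-1}(C_i)$.

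Since a countable union of $p$-exceptional curve families is $p$-exceptional, we may fix a single $p$-a.e.\ family of curves $\gamma$ on which, simultaneously for every $i$, the locality relation above holds and $\bm\xi_i$ realizes the differential of $f_i$ with respect to $(U,\varphi)$. For such $\gamma$ and each $i$, we have
\[
(f \circ \gamma)'_t = (f_i \circ \gamma)'_t = (\bm\xi_i)_{\gamma_t}\bigl((\varphi \circ \gamma)'_t\bigr) = \bm\xi_{\gamma_t}\bigl((\varphi \circ \gamma)'_t\bigr), \quad \text{a.e. } t \in \gamma^{-1}(C_i').
\]
To handle the remainder $E$: since $\mu(E) = 0$, Fuglede's lemma gives that for $p$-a.e.\ $\gamma$ the set $\gamma^{-1}(E) \cap \{|\gamma'_t| > 0\}$ is Lebesgue negligible; on its complement $(f\circ\gamma)'_t = 0 = \bm\xi_{\gamma_t}((\varphi\circ\gamma)'_t)$ trivially, since $(f\circ\gamma)'_t$ vanishes wherever $|\gamma'_t|=0$ (as $|(f\circ\gamma)'_t| \le |Df|_p(\gamma_t)|\gamma'_t|$). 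Combining these yields \eqref{eq:differential} on all of $\gamma^{-1}(U)$ for $p$-a.e.\ $\gamma$, proving that $\bm\xi$ is a $p$-weak differential of $f$.

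The only genuinely delicate ingredient is the locality step — all other elements are bookkeeping — but it is a standard consequence of absolute continuity of Newton--Sobolev functions along $p$-a.e.\ curve combined with the Lebesgue density theorem, so no essential obstacle is expected.
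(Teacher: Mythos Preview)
Your proof is correct and follows essentially the same approach as the paper: disjoint refinement of the cover, piecewise definition of $\bm\xi$, and the locality argument via Lebesgue density points of $\gamma^{-1}(C_i')$ along $p$-a.e.\ curve. The paper's proof is slightly terser (it bundles your treatment of the null remainder $E$ into the statement that ``almost every $t\in\gamma^{-1}(U)$ is a density point of $\gamma^{-1}(W_j)$ for some $j$''), but the substance is identical.
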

\begin{proof}
Let $\bm\xi_i$ be a $p$-weak differential of $f_i$ for each $i\in I$. Identify $I$ with a subset of $\N$ and define the sets
\[
W_j\defeq U\cap C_j\setminus \bigcup_{i<j}C_i.
\]
Let $J$ be the set of $j\in I$ for which $\mu(W_j)>0$. Then $\{W_j\}_{j\in J}$ is a partition of $U$ up to a null-set and $f|_{W_j}=f_j|_{W_j}$ for all $j\in J$. We claim that
\[
\bm\xi\defeq\sum_{j\in J}\chi_{W_j}\bm\xi_j\colon U\to (\R^N)^*
\]
is a $p$-weak differential of $f$ with respect to $(U,\varphi)$. Indeed, note that $p$-a.e. curve $\gamma$ in $X$ has the following properties:
\begin{itemize}
    \item[(i)] $f\circ\gamma,\ \varphi\circ\gamma$ and $ f_j\circ\gamma$ are absolutely continuous for all $j\in J$;
    \item[(ii)] almost every $t\in \gamma\inv(U)$ is a density point of $\gamma\inv(W_j)$ for some $j$;
    \item[(iii)] $(f_j\circ\gamma)_t'=(\bm\xi_j)_{\gamma(t)}((\varphi\circ\gamma)_t')$ a.e. $t\in \gamma\inv(U)$.
\end{itemize}
For any such $\gamma$ we have that
\begin{align*}
(f\circ\gamma)_t'=(f_j\circ\gamma)_t'=(\bm\xi_j)_{\gamma(t)}((\varphi\circ\gamma)_t')=\bm\xi_{\gamma(t)}((\varphi\circ\gamma)_t')
\end{align*}
for some $j$, for almost every $t\in \gamma\inv(U)$. This proves the claim. 
\end{proof}
\begin{remark}
 Lemma \ref{lem:piecewisegood} shows in particular the local nature of $p$-weak differentials: if $V\subset U$ and $f_1=f_2$ $\mu$-a.e. on $V$, then a $p$-weak differential $\bm\xi$ of $f_1$ with respect to $(U,\varphi)$ is a $p$-weak differential of $f_2$ with respect to $(V,\varphi)$.
\end{remark}

\begin{lemma}\label{lem:limit_has_diff}
Suppose $(f_j)\subset N^{1,p}_{loc}(X)$ is a sequence such that $f_j\to f$ in $L^p_{loc}(\mu)$ and $(|Df_j|_p)_j$ is equi-integrable. If $f_j$ has a $p$-weak differential with respect to $(U,\varphi)$ for each $j$, then $f$ has a $p$-weak differential with respect to $(U,\varphi)$.
\end{lemma}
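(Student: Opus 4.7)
The strategy is to extract a weak limit $\bm\xi$ of the sequence of differentials $(\bm\xi_j)$, pass to strong convergence of convex combinations via Mazur's lemma, and then pass to the limit in the defining identity \eqref{eq:differential} along $p$-almost every curve. The main obstacle lies in justifying weak compactness in the fiber-norm Banach space $\Gamma_p(T^*U)$: when the pointwise seminorm $|\cdot|_x$ is not uniformly equivalent to the Euclidean norm on $\mathrm{Im}(L_x)$, one has to work with measurable local frames on level sets where the fiber dimension is constant. A secondary technical point is the Borel approximation needed to extend the canonical upper gradient inequality from constants to general Borel maps.

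By Lemma \ref{lem:diff_basic}, for each $j$ I would choose a $p$-weak differential $\bm\xi_j$ of $f_j$ satisfying $\bm\xi_j(x)\in\mathrm{Im}(L_x)$ and $|\bm\xi_j|_x=|Df_j|_p(x)$ for $\mu$-a.e.\ $x\in U$. Equi-integrability of $(|Df_j|_p)$ then transfers to boundedness and equi-integrability of $(\bm\xi_j)$ in $\Gamma_p(T^*U)$. Extracting a weakly convergent subsequence $\bm\xi_j\rightharpoonup\bm\xi$ in $\Gamma_{p,\loc}(T^*U)$ and applying Mazur's lemma, I would obtain finite convex combinations $\tilde\xi_n=\sum_{k\ge n}\alpha^n_k\bm\xi_k$ converging strongly to $\bm\xi$ in $\Gamma_{p,\loc}$. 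Setting $\tilde f_n=\sum_{k\ge n}\alpha^n_k f_k$, linearity of \eqref{eq:differential} shows that $\tilde\xi_n$ is a $p$-weak differential of $\tilde f_n$, while $\tilde f_n\to f$ in $L^p_{\loc}(\mu)$ and $(|D\tilde f_n|_p)$ remains equi-integrable.

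To verify \eqref{eq:differential} for $(f,\bm\xi)$, Theorem \ref{thm:canonical_repr_of_grad}(a) combined with a Borel approximation by simple maps yields the pointwise bound
\[
|\bm\zeta_{\gamma_t}((\varphi\circ\gamma)_t')|\le |\bm\zeta(\gamma_t)|_{\gamma_t}\chi_U(\gamma_t)|\gamma_t'|, \quad \text{a.e.\ } t\in[0,1],
\]
valid for $p$-a.e.\ $\gamma$ and any Borel $\bm\zeta:U\to(\R^N)^*$. Applying this to $\bm\zeta=\tilde\xi_n-\bm\xi$, together with Fuglede's lemma (since $|\tilde\xi_n-\bm\xi|\chi_U\to 0$ in $L^p(\mu)$), gives $\int_\gamma|\tilde\xi_n-\bm\xi|\chi_U\,ds\to 0$ along a subsequence for $p$-a.e.\ $\gamma$, whence
\[
(\tilde\xi_n)_{\gamma_t}((\varphi\circ\gamma)_t')\longrightarrow \bm\xi_{\gamma_t}((\varphi\circ\gamma)_t') \quad \text{in } L^1(\gamma^{-1}(U);dt).
\]

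To match this with the other side of the identity $(\tilde f_n\circ\gamma)'_t=(\tilde\xi_n)_{\gamma_t}((\varphi\circ\gamma)'_t)$, I would use Fuglede once more to obtain $\tilde f_n\circ\gamma\to f\circ\gamma$ in $L^p([0,1])$ along a further subsequence for $p$-a.e.\ $\gamma$. Combined with the uniform $L^p([0,1])$-bound on $(\tilde f_n\circ\gamma)'_t$ coming from equi-integrability, weak compactness in $W^{1,p}([0,1])$ forces $f\circ\gamma\in W^{1,p}([0,1])$ (so $f\in N^{1,p}_{\loc}(X)$) and $(\tilde f_n\circ\gamma)'_t\rightharpoonup(f\circ\gamma)'_t$ weakly in $L^p([0,1])$. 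Matching the strong $L^1$-limit on the right with the weak $L^p$-limit on the left, restricted to $\gamma^{-1}(U)$, yields $(f\circ\gamma)'_t=\bm\xi_{\gamma_t}((\varphi\circ\gamma)_t')$ for a.e.\ $t\in\gamma^{-1}(U)$, establishing that $\bm\xi$ is a $p$-weak differential of $f$ with respect to $(U,\varphi)$.
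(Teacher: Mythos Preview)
Your proposal is correct and follows essentially the same route as the paper: weak compactness of $(\bm\xi_j)$ in $\Gamma_{p,\loc}(T^*U)$ (reflexivity for $p>1$, Dunford--Pettis for $p=1$), Mazur's lemma to upgrade to strong convergence of convex combinations, and Fuglede's lemma to pass the identity \eqref{eq:differential} to the limit along $p$-a.e.\ curve; the paper simply defers the last step to \cite[Lemma 4.7]{teriseb}, while you spell it out. One small imprecision: Fuglede only gives $\int_\gamma|\tilde f_n-f|\,ds\to 0$, not $L^p([0,1])$-convergence, and equi-integrability on $X$ does not by itself yield a uniform $L^p([0,1])$-bound on $(\tilde f_n\circ\gamma)'$ for a fixed $\gamma$ --- but this detour through $W^{1,p}([0,1])$-compactness is unnecessary, since lower semicontinuity already gives $f\in N^{1,p}_{\loc}(X)$, hence $f\circ\gamma$ is absolutely continuous for $p$-a.e.\ $\gamma$, and the $L^1(\gamma^{-1}(U))$-convergence of the right-hand side together with $\tilde f_n\circ\gamma\to f\circ\gamma$ in $L^1$ suffices to identify $(f\circ\gamma)'_t$ on $\gamma^{-1}(U)$.
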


\begin{proof}
For each $j$, let $\bm\xi_j$ be a $p$-weak differential of $f_j$. By Lemma \ref{lem:diff_basic} we may assume that $(\bm\xi_j)\subset \Gamma_{p,loc}(T^*U)$.  Since 
\[
\|\bm\xi_j\|_{\Gamma_p(V)}=\left(\int_V|Df_j|_p^p\ud\mu\right)^{1/p},\quad V\subset U,
\]
the sequence $(|\bm\xi_j|^p)$ is equi-integrable. A subsequence of $(\bm\xi_j)$ converges weakly (by reflexivity for $p>1$ and by Dunford--Pettis for $p=1$) to an element $\bm\xi\in \Gamma_{p,loc}(T^*U)$. Denote by $\widetilde{\bm\xi_j}$ and $\widetilde{f_j}$ the sequence of convex combinations (granted by Mazur's lemma) converging to $\bm\xi$ and $f$ in $\Gamma_{p,loc}(T^*U)$ and $L^p_{loc}(\mu)$, respectively, in norm. Now we may argue as in the proof of \cite[Lemma 4.7]{teriseb} (using Fuglede's lemma) to conclude that the identities $(\widetilde{f_j}\circ\gamma)'_t=\widetilde{\bm\xi_j}_{\gamma(t)}((\varphi\circ\gamma)_t')$ for $p$-a.e. $\gamma$ and a.e. $t\in \gamma\inv(U)$ pass to the limit and yield
\[
(f\circ\gamma)_t'=\bm\xi_{\gamma(t)}((\varphi\circ\gamma)_t') \quad a.e.\ t\in \gamma\inv(U)
\]
for $p$-a.e. $\gamma$. 
This proves that $\bm\xi$ is a $p$-weak differential of $f$ with respect to $(U,\varphi)$.
\end{proof}

\begin{remark}
The proof above shows that any weak limit in $\Gamma_p(T^*U)$ of a sequence of $p$-weak differentials of the $f_j$'s is a $p$-weak differential of $f$.
\end{remark}

We close this section by proving Theorem \ref{thm:equiv_lipdiff_weakchart}.
\begin{proof}[Proof of Theorem \ref{thm:equiv_lipdiff_weakchart}]
If $(U,\varphi)$ is a $p$-weak chart, the existence of $p$-weak differentials of Newton--Sobolev functions is proved in \cite[Theorem 1.7]{teriseb}, and their uniqueness follows from $p$-independence and Theorem \ref{thm:canonical_repr_of_grad}(c). The implication (2)$\implies$(3) is trivial. Thus it suffices to prove (3)$\implies$(1).

Assume that every $f\in \LIP(X)$ admits a unique $p$-weak differential with respect to $(U,\varphi)$. It follows that $(U,\varphi)$ is $p$-independent. Indeed, let $\Phi$ represent the gradient of $\varphi$ canonically, cf. Theorem \ref{thm:canonical_repr_of_grad}. Since any Borel map $\bm\xi\colon U\to (\R^N)^*$ with $\bm\xi_x\in \ker \Phi^x$ is a $p$-weak differential of the zero function, the uniqueness of  $p$-weak differentials with respect to $(U,\varphi)$ implies that $\ker \Phi^x=\{0\}$ $\mu$-a.e. $x\in U$. Theorem \ref{thm:canonical_repr_of_grad}(d) implies that $(U,\varphi)$ is $p$-independent.

It remains to show that $(U,\varphi)$ is $p$-maximal. Suppose that $V\subset U$ has positive measure and that $\psi \in {\rm LIP}(X;\R^M)$ is $p$-independent on $V$. We will show that $M\leq N$. Let $\ud \psi_i \in (\R^N)^*$ be the unique $p$-weak differentials of the components $\psi_i$ of $\psi$ for $i=1,\dots M$. To reach a contradiction assume $M>N$. Then $\ud \psi_i$ are linearly dependent, and there are Borel functions $a_i \in L^\infty(V)$ so that $\sum_{i=1}^M a_i \ud \psi_i = 0$, with $\mathbf{a}\defeq (a_1,\dots, a_M) \neq 0$ $\mu$-a.e. on $V$. But, then for $p$-a.e. absolutely continuous $\gamma$ and a.e. $t\in \gamma\inv(V)$ we have 
\[
 \sum_{i=1}^M a_i(\gamma_t) (\psi_i \circ \gamma)'_t=0.
\]
By Theorem \ref{thm:canonical_repr_of_grad}(c) this implies that $\Psi^x(\mathbf{a})=0$ for a.e. $x\in V$, where $\Psi$ canonically represents the gradient of $\psi$. By Theorem \ref{thm:canonical_repr_of_grad}(d) this is a contradiction to $p$-independence.
\end{proof}

\section{Products of charts and tensorization}

\subsection{Tensorization of charts}\label{sec:tensor_chart}

Throughout this section we fix $p$-weak charts $(U,\varphi)$ and $(V,\psi)$ of dimensions $N$ and $M$ in $X$ and $Y$, respectively. To prove Theorem \ref{thm:tensor-charts}, the following two propositions will be used.


\begin{prop}\label{prop:c1-comb_admits_diff}
    Let $u\in N^{1,p}_{loc}(X)$, $v\in N^{1,p}_{loc}(Y)$, $h\in C^1(\R^2)$ and set $f\defeq h\circ(u,v)\in N^{1,p}_{loc}(X\times Y)$. Then the Borel map $\bm\xi\colon U\times V\to (\R^{N+M})^*\simeq(\R^N)^*\times(\R^M)^*$ given by 
    \[
     \bm\xi(x,y)\defeq\partial_1h(u(x),v(y))\ud_xu + \partial_2h(u(x),v(y))\ud_yv
    \]
 is a $p$-weak differential of $f$ with respect to $(U\times V,\varphi\times\psi)$, and 
 \[
     g \defeq \|(|\partial_1h(u(x),v(y))||\ud_xu|, |\partial_2h(u(x),v(y))||\ud_yv|)\|'
    \]
    is a $p$-weak upper gradient of $f$.
\end{prop}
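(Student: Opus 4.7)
The plan is to reduce the claim to a chain-rule computation along curves, after first transferring the $p$-weak differentials of $u$ and $v$ to the component curves of $p$-a.e.\ product curve. Since $(U,\varphi)$ and $(V,\psi)$ are $p$-weak charts, Theorem \ref{thm:equiv_lipdiff_weakchart} (localised in the usual way via truncations) yields $p$-weak differentials $\ud u\colon U\to (\R^N)^*$ and $\ud v\colon V\to (\R^M)^*$ for $u$ and $v$, whose pointwise norms coincide with $|Du|_p$ and $|Dv|_p$ by Lemma \ref{lem:diff_basic}.

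The first technical step is a modulus projection: if $\Gamma\subset AC([0,1];X)$ satisfies $\Mod_p(\Gamma)=0$, then the lifted family $\widetilde\Gamma$ of all $(\alpha,\beta)\in AC([0,1];X\times Y)$ with $\alpha\in\Gamma$ also has zero $p$-modulus. Using $\sigma$-finiteness of $\nu$ one reduces to the subfamily of curves with $\beta([0,1])\subset K$ for a fixed compact $K\subset Y$ of finite $\nu$-measure; given $\rho$ admissible for $\Gamma$, the lift $\widetilde\rho(x,y):=\rho(x)\chi_K(y)$ is admissible for this subfamily, since $\int_\gamma\widetilde\rho\,ds\ge \int_\alpha\rho\,ds\ge 1$, while $\|\widetilde\rho\|_{L^p}^p=\nu(K)\|\rho\|_{L^p}^p$. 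The symmetric statement holds for the $\beta$-projection. Applying this to the exceptional curve families of the $p$-weak differential properties of $u$ and $v$, we conclude that for $p$-a.e.\ $\gamma=(\alpha,\beta)$ both $u\circ\alpha$ and $v\circ\beta$ are absolutely continuous with
\[
 (u\circ\alpha)'_t=\ud_{\alpha_t}u\bigl((\varphi\circ\alpha)'_t\bigr)\text{ a.e. }t\in\alpha^{-1}(U),\quad (v\circ\beta)'_t=\ud_{\beta_t}v\bigl((\psi\circ\beta)'_t\bigr)\text{ a.e. }t\in\beta^{-1}(V).
\]

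Since $h\in C^1(\R^2)$ is locally Lipschitz and $(u\circ\alpha,v\circ\beta)$ has compact image in $\R^2$, the composition $f\circ\gamma=h\circ(u\circ\alpha,v\circ\beta)$ is absolutely continuous and the classical chain rule gives
\[
 (f\circ\gamma)'_t=\partial_1h(u(\alpha_t),v(\beta_t))(u\circ\alpha)'_t+\partial_2h(u(\alpha_t),v(\beta_t))(v\circ\beta)'_t
\]
almost everywhere. On $\gamma^{-1}(U\times V)=\alpha^{-1}(U)\cap\beta^{-1}(V)$ we substitute the identities from the previous paragraph and use $((\varphi\times\psi)\circ\gamma)'_t=((\varphi\circ\alpha)'_t,(\psi\circ\beta)'_t)$ to identify the right hand side with $\bm\xi_{\gamma_t}\bigl(((\varphi\times\psi)\circ\gamma)'_t\bigr)$, which establishes the $p$-weak differential claim.

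For the upper gradient claim, bound
\[
 |(f\circ\gamma)'_t|\le |\partial_1h|\,|Du|_p(\alpha_t)|\alpha'_t|+|\partial_2h|\,|Dv|_p(\beta_t)|\beta'_t|
\]
using that $|Du|_p$ and $|Dv|_p$ control the respective component derivatives. Letting $a_1,a_2\ge 0$ denote the products $|\partial_ih|$ times the corresponding gradient norms and setting $(s_1,s_2):=(|\alpha'_t|,|\beta'_t|)/|\gamma'_t|$, we have $\|(s_1,s_2)\|=1$ with $s_i\ge 0$, so the defining extremal property of the partial dual norm yields $a_1s_1+a_2s_2\le \|(a_1,a_2)\|'=g(\gamma_t)$, and hence $|(f\circ\gamma)'_t|\le g(\gamma_t)|\gamma'_t|$. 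Integrating along $\gamma$ produces the desired upper-gradient inequality on the portion of $\gamma$ in $U\times V$; combining with any globally defined $L^p_{\loc}$-upper gradient of $f$ to cover the complement shows that $g$ (extended appropriately outside $U\times V$) is a $p$-weak upper gradient. The main obstacle is the modulus projection lemma in the first step; once it is at hand, the remainder is a direct chain-rule calculation combined with the defining inequality of the partial dual norm.
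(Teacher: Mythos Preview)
Your argument is correct and follows the same overall strategy as the paper: lift the exceptional curve families from $X$ and $Y$ to $X\times Y$ via a modulus projection, compute the derivative along product curves by a chain rule, and then bound using the partial dual norm. The one notable difference is that you invoke the classical chain rule for $h\in C^1$ composed with the absolutely continuous map $t\mapsto (u(\alpha_t),v(\beta_t))$, whereas the paper routes the same computation through Lemma~\ref{lem:diagonal-lemma} (the diagonal lemma), first introducing $H(t,s)=h(u(\alpha_t),v(\beta_s))$ and then differentiating along the diagonal. For this proposition your shortcut is legitimate, since $h\in C^1$ makes the one-variable chain rule immediate; the paper's lemma is stated for Lipschitz outer functions and carries the more delicate hypothesis on partial derivatives precisely to handle the non-$C^1$ situation that arises later (e.g.\ $h=\|\cdot\|$ in the proof of Theorem~\ref{thm:tensor-charts}).

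Two small points worth tightening: in the modulus projection, the inequality $\int_\gamma\widetilde\rho\,ds\ge\int_\alpha\rho\,ds$ uses $|\gamma'_t|\ge|\alpha'_t|$, which in general only holds up to the constant $\|(1,0)\|$; this is harmless for the zero-modulus conclusion but should be stated. Also, you bound with $|Du|_p$ and $|Dv|_p$ while the proposition is phrased with $|\ud_xu|$ and $|\ud_yv|$; you already cited Lemma~\ref{lem:diff_basic} for their equality, so this is only a matter of making the identification explicit at the end.
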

\begin{prop}\label{prop:ptwise_norm_on_prod}
Let $|\cdot|_x$ and $|\cdot|_y$ denote the pointwise norms associated to $\varphi$ and $\psi$, respectively. Then the Borel map $\Xi\colon U\times V\to (\R^{N+M})^*\simeq (\R^N)^*\times(\R^M)^*$ given by
\[
\Xi((x,y),(\xi,\zeta))\defeq \|(|\xi|_x,|\zeta|_y)\|'
\]
canonically represents the gradient of $\varphi\times\psi\in N^{1,p}_{loc}(X\times Y;\R^{N+M})$.
\end{prop}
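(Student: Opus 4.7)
The plan is to establish the identity $\Xi_{(\xi,\zeta)}(x,y) = |D((\xi,\zeta)\cdot(\varphi\times\psi))|_p(x,y)$ for $\mu\times\nu$-a.e.\ $(x,y)$ by verifying the two inequalities separately. The $\leq$ direction is a direct application of Proposition \ref{prop:c1-comb_admits_diff}; the $\geq$ direction requires a ``diagonal curve'' construction on $X\times Y$.

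For the upper bound $|D((\xi,\zeta)\cdot(\varphi\times\psi))|_p \leq \Xi_{(\xi,\zeta)}$, I would apply Proposition \ref{prop:c1-comb_admits_diff} with $h(s,t) = s+t$, $u = \xi\circ\varphi$, and $v = \zeta\circ\psi$. Since the constant map $\bm\xi\equiv\xi$ satisfies $(u\circ\gamma)'_t = \xi((\varphi\circ\gamma)'_t)$ for every absolutely continuous curve $\gamma$, it is a $p$-weak differential of $u$ with respect to $(U,\varphi)$, so Lemma \ref{lem:diff_basic}(2) gives $|\ud_x u| = |\xi|_x$; analogously $|\ud_y v| = |\zeta|_y$. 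Proposition \ref{prop:c1-comb_admits_diff} then identifies $\Xi_{(\xi,\zeta)}(x,y) = \|(|\xi|_x, |\zeta|_y)\|'$ as a $p$-weak upper gradient of $(\xi,\zeta)\cdot(\varphi\times\psi)$, giving the claimed inequality.

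For the lower bound, by the definition of the partial dual norm (taking a countable dense subset of the compact set $\{(r,s):\|(r,s)\|=1,r,s\geq 0\}$ and using continuity in $(r,s)$), it suffices to show that for each fixed such $(r,s)$,
\[
|D((\xi,\zeta)\cdot(\varphi\times\psi))|_p(x,y)\geq r|\xi|_x + s|\zeta|_y \quad \mu\times\nu\textrm{-a.e.}
\]
I would take $q$-plans $\bm\eta_X$ on $X$ and $\bm\eta_Y$ on $Y$ canonically representing the gradients of $\varphi$ and $\psi$ (Theorem \ref{thm:canonical_repr_of_grad}), reparametrize $\bm\eta_X$- (resp.\ $\bm\eta_Y$-)a.e.\ curve to have constant speed $r$ (resp.\ $s$) using the rescaling of Proposition \ref{prop:plans_to_testplans}, and then push forward $\bm\eta_X\otimes\bm\eta_Y$ under the diagonal map $(\alpha,\beta)\mapsto (t\mapsto (\alpha_t,\beta_t))$ to form a $q$-plan $\bm\eta^{(r,s)}$ on $X\times Y$ concentrated on curves of metric speed $\|(r,s)\|=1$. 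The boundary cases $r=0$ or $s=0$ would be handled separately by purely horizontal or vertical curves. Along $\bm\eta^{(r,s)}$-a.e.\ $\gamma = (\alpha,\beta)$, we have $((\xi,\zeta)\cdot(\varphi\times\psi)\circ\gamma)'_t = \xi((\varphi\circ\alpha)'_t) + \zeta((\psi\circ\beta)'_t)$. Combining the upper gradient inequality for $|D((\xi,\zeta)\cdot(\varphi\times\psi))|_p$ along these unit-speed curves with the essential supremum characterizations $\|\xi((\varphi\circ\alpha)'_t)\|_{L^\infty((\bm\pi_X)_x)} = r|\xi|_x$ and $\|\zeta((\psi\circ\beta)'_t)\|_{L^\infty((\bm\pi_Y)_y)} = s|\zeta|_y$ should yield the desired bound.

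\emph{Main obstacle.} The technical crux is extracting $r|\xi|_x + s|\zeta|_y$ as the essential supremum of the sum $\xi((\varphi\circ\alpha)'_t) + \zeta((\psi\circ\beta)'_t)$ along the disintegration of $|\gamma'_t|\,\ud t\otimes \ud\bm\eta^{(r,s)}$ at the point $(x,y)$. Because this disintegration couples the events $\alpha_t = x$ and $\beta_t = y$ through a \emph{shared} time $t$, the factorization of the $L^\infty$ norm of a sum into the sum of individual $L^\infty$ norms -- which is elementary for independent random variables of arbitrary sign -- is not automatic. The hard part will be showing that the disintegration decomposes as a mixture over $t$ of product conditional measures, whereupon the independence-based factorization applies. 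A plausible way around this is to first smear both $\bm\eta_X$ and $\bm\eta_Y$ in time via the $\bm\eta_\epsilon$-type construction from Proposition \ref{prop:plans_to_testplans} before forming the diagonal product, which should provide enough temporal diversity to align near-optimal choices of $\alpha$ and $\beta$ simultaneously.
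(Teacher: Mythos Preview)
Your upper bound argument is correct and matches the paper's exactly: both apply Proposition~\ref{prop:c1-comb_admits_diff} with $h(s,t)=s+t$, $u=\xi\circ\varphi$, $v=\zeta\circ\psi$.

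For the lower bound, however, your diagonal-plan route is harder than necessary and the obstacle you flag is genuine and unresolved. The disintegration at $(x,y)$ of the diagonal plan really does couple the two factors through the shared time $t$, and your proposed fix (temporal smearing before diagonalizing) is speculative: even after smearing, the disintegration at $(x,y)$ is a mixture $\int (\bm\pi_X)_{x,t}\otimes(\bm\pi_Y)_{y,t}\,\ud\lambda(t)$ rather than a single product, and the essential supremum of a sum over such a mixture need not equal the sum of the individual essential suprema unless the near-maximizers of the two summands can be realized at the \emph{same} $t$. You have not shown this.

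The paper sidesteps the issue entirely by \emph{not} using the diagonal. Instead of forming curves $t\mapsto(\alpha_t,\beta_t)$, it works with the two-parameter evaluation $\tilde e\bigl((t,\alpha),(s,\beta)\bigr)=(\alpha_t,\beta_s)$ and disintegrates the genuine product $\bm\pi_X\times\bm\pi_Y$ with respect to $\tilde e$, obtaining $(\bm\pi_X)_x\times(\bm\pi_Y)_y$. For each fixed $(t,s)$ and each $(a,b)$ in a countable dense set, the upper-gradient inequality is applied at $\tau=0$ along the short curve $\tau\mapsto\bigl(\alpha(t+a\tau/|\alpha_t'|),\,\beta(s+b\tau/|\beta_s'|)\bigr)$, yielding
\[
a\,\frac{\xi((\varphi\circ\alpha)_t')}{|\alpha_t'|}+b\,\frac{\zeta((\psi\circ\beta)_s')}{|\beta_s'|}\le h(\alpha_t,\beta_s)\,\|(|a|,|b|)\|.
\]
Passing to the product disintegration, the left-hand side becomes an $L^\infty$ norm over $(\bm\pi_X)_x\times(\bm\pi_Y)_y$ of a sum of two functions each depending on only one factor; the factorization $\operatorname{esssup}(A+B)=\operatorname{esssup}A+\operatorname{esssup}B$ is then immediate (after a sign adjustment $a'=\sigma_x a$, $b'=\sigma_y b$). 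This is precisely the independence you were hoping to recover, obtained for free by decoupling the time parameters. I recommend abandoning the diagonal construction in favor of this two-time approach.
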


It follows in particular from Proposition \ref{prop:ptwise_norm_on_prod} that $(U\times V,\varphi\times\psi)$ is $p$-independent. We present the proof of Theorem \ref{thm:tensor-charts} assuming Propositions \ref{prop:c1-comb_admits_diff} and \ref{prop:ptwise_norm_on_prod} above, and after this prove the propositions.

\begin{proof}[Proof of Theorem \ref{thm:tensor-charts}]
Proposition \ref{prop:ptwise_norm_on_prod} implies that $(U\times V,\varphi\times\psi)$ is $p$-independent and thus any $p$-weak differentials are necessarily unique. By Theorem \ref{thm:equiv_lipdiff_weakchart} it suffices to show that every $f\in \LIP(X)$ admits a $p$-weak differential with respect to $(U\times V,\varphi\times,\psi)$. We do this in two steps using Lemma \ref{lem:limit_has_diff}.

We first prove the claim for distance functions. Indeed, let $(x_0,y_0)\in X\times Y$ and $f=d((x_0,y_0),\cdot)$. Note that $f=h\circ(u,v)$, where $h(t,s)=\|(t,s)\|$, $u=d(x_0,\cdot)$ and $v=d(y_0,\cdot)$. Let $h_j\colon \R^2\to [0,\infty]$ be a sequence of smooth functions with uniformly bounded Lipschitz constant converging to $h$ pointwise. By Proposition \ref{prop:c1-comb_admits_diff} the functions $f_j\defeq h_j\circ(u,v)$ admit a $p$-weak differential. The sequence $(f_j)$ is moreover uniformly Lipschitz and thus $(|Df_j|^p)$ is equi-integrable. Since $h_j\to h$ locally uniformly we have that $f_j\to f$ uniformly on bounded sets. By Lemma \ref{lem:limit_has_diff} it follows that $f$ admits a $p$-weak differential, as claimed.

Next we prove the claim for general $f\in \LIP(X\times Y)$ using approximation by a sequence of MacShane extensions. Choose a countable dense set $D\defeq \{(x_1,y_1),(x_2,y_2),(x_3,y_3),\ldots\}\subset X\times Y$ and define
\[
f_N(x)=\min\{ f(x_j)+\LIP(f)d(x,(x_j,y_j)):\ 1\le j \le N\}. 
\]
Note that $\sup_N\LIP(f_N)<\infty$ and $f_N\to f$ pointwise. By Lemma \ref{lem:limit_has_diff} $f$ has a $p$-weak differential if $f_N$  has a $p$-weak differential with respect to $(U\times V,\varphi\times \psi)$ for each $N\in\N$. To see this, observe that there exists a Borel partition $B_1,\ldots, B_N$ of $X$ such that
\[
f_N=f(x_j)+\LIP(f)d((x_j,y_j),\cdot)\textrm{ on }B_j
\]
for each $1\le j\le N$, and that
\[
d((x_j,y_j),\cdot)=\|(d(x_j,\cdot),d(y_j,\cdot))\|.
\]
It follows from Lemma \ref{lem:piecewisegood} that $f_N$ has a $p$-weak differential for each $N$. The claim about the pointwise norm follows from Proposition \ref{prop:ptwise_norm_on_prod}, completing the proof.
\end{proof}

The remainder of this subsection is devoted to proving Propositions \ref{prop:c1-comb_admits_diff} and \ref{prop:ptwise_norm_on_prod}. We start with a technical lemma.
\begin{lemma}\label{lem:diagonal-lemma} Let $h=\tilde h\circ(g_1,g_2)$, where $\tilde h\colon [0,1]^2\to \R$ is Lipschitz and $g_1,g_2\colon [0,1]\to [0,1]$ are absolutely continuous. Assume there exist Borel sets $A,B\subset [0,1]$ such that the following hold: (1) for every $t\in A$ the partial derivative $\partial_1h(t,s)$ exists for every $s\in [0,1]$, and (2) for every $s\in B$ the partial derivative $\partial_2h(t,s)$ exists for every $t\in [0,1]$. Then the function $\delta(t)\defeq h(t,t)$ is absolutely continuous and
\[
\delta'(t)= \partial_1 h(t,t) + \partial_2 h(t,t)\quad a.e.\ t\in A\cap B.
\]
\end{lemma}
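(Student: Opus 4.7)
I would begin by establishing absolute continuity of $\delta$ directly from the Lipschitz bound for $\tilde h$ and the absolute continuity of $g_1,g_2$: if $L$ is the Lipschitz constant of $\tilde h$ then
\[
|\delta(t)-\delta(t')|\le L\bigl(|g_1(t)-g_1(t')|+|g_2(t)-g_2(t')|\bigr),
\]
so $\delta$ inherits absolute continuity from $g_1$ and $g_2$, and in particular $\delta'(t)$ exists for almost every $t\in[0,1]$.

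To identify the derivative at a typical point $t_0\in A\cap B$, I would use the standard three-term decomposition of the diagonal increment:
\begin{align*}
\delta(t_0+\epsilon)-\delta(t_0) &= [h(t_0+\epsilon,t_0)-h(t_0,t_0)]+[h(t_0,t_0+\epsilon)-h(t_0,t_0)]\\
&\quad + D(\epsilon),
\end{align*}
where
\[
D(\epsilon)=h(t_0+\epsilon,t_0+\epsilon)-h(t_0+\epsilon,t_0)-h(t_0,t_0+\epsilon)+h(t_0,t_0)
\]
is the mixed second difference. Hypothesis (1) applied with $s=t_0$ gives $\partial_1h(t_0,t_0)$ as the limit of the first bracket divided by $\epsilon$, and hypothesis (2) applied with $t=t_0$ gives $\partial_2h(t_0,t_0)$ as the limit of the second. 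Hence the claimed formula reduces to showing that $D(\epsilon)/\epsilon\to 0$ for a.e. $t_0\in A\cap B$.

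The main obstacle is the control of the mixed difference. I would exploit that for each fixed $t$ the map $s\mapsto h(t,s)$ is absolutely continuous (since $\tilde h$ is Lipschitz and $g_2$ is AC), and rewrite
\[
D(\epsilon)=\int_{t_0}^{t_0+\epsilon}\bigl[\partial_2h(t_0+\epsilon,s)-\partial_2h(t_0,s)\bigr]ds,
\]
where the integrand is well-defined at a.e. $s$ and agrees with the AC-derivative of $s\mapsto h(t_0+\epsilon,s)-h(t_0,s)$. The goal then becomes to show that this average vanishes faster than $\epsilon$. At a.e. $t_0\in B$, Lebesgue differentiation of the bounded function $s\mapsto\partial_2h(t_0,s)$ at $s=t_0$ gives convergence of the average $\frac{1}{\epsilon}\int_{t_0}^{t_0+\epsilon}\partial_2h(t_0,s)\,ds$ to $\partial_2h(t_0,t_0)$, while the pointwise existence of $\partial_2h(\cdot,s)$ everywhere for $s\in B$ (hypothesis (2)) together with the Lipschitz domination $|\partial_2h(t,s)|\le L|g_2'(s)|$ a.e. would allow one to control the perturbation incurred by moving the first argument from $t_0$ to $t_0+\epsilon$.

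The hard part I anticipate is closing this last estimate uniformly in $\epsilon$: the integrand depends on $\epsilon$ in both arguments, and the absence of continuity of $t\mapsto\partial_2h(t,s)$ (guaranteed only in a pointwise, not uniform, sense) forces a Fubini-style argument on the rectangle $[t_0,t_0+\epsilon]^2$ combined with Lebesgue density restricted to $A\cap B$. An alternative, cleaner route would be to exploit that in the intended application $\tilde h$ is in fact $C^1$ (cf. Proposition~\ref{prop:c1-comb_admits_diff}), where the mean value theorem applied to $\tilde h$ in the first variable yields
\[
\tfrac{1}{\epsilon}[h(t_0+\epsilon,s)-h(t_0,s)]=\partial_1\tilde h(\xi_\epsilon,g_2(s))\,\tfrac{g_1(t_0+\epsilon)-g_1(t_0)}{\epsilon}\to\partial_1h(t_0,s)
\]
uniformly in $s$ (by continuity of $\partial_1\tilde h$ and $\xi_\epsilon\to g_1(t_0)$), yielding $D(\epsilon)=o(\epsilon)$ at once; this is the step I would first verify and then push to the Lipschitz generality via the approximation argument outlined above.
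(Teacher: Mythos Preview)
Your absolute continuity argument matches the paper's. For the derivative formula, however, your pointwise three-term decomposition leaves a genuine gap in the Lipschitz case: you do not establish $D(\eps)/\eps\to 0$ for a.e.\ $t_0\in A\cap B$. The integral rewriting
\[
D(\eps)=\int_{t_0}^{t_0+\eps}\bigl[\partial_2h(t_0+\eps,s)-\partial_2h(t_0,s)\bigr]\,ds
\]
only yields $|D(\eps)|\le 2L\int_{t_0}^{t_0+\eps}|g_2'(s)|\,ds=O(\eps)$, since no continuity of $t\mapsto\partial_2h(t,s)$ is available; the ``Fubini-style argument combined with Lebesgue density'' you allude to is not carried out, and it is not clear how it would produce the missing $o(1)$ factor. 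Your $C^1$ argument is correct (with the harmless caveat that the mean-value point $\xi_\eps$ depends on $s$), but it proves only a special case of the lemma as stated, and the vague ``push to the Lipschitz generality via approximation'' does not obviously work because the sets $A,B$ are tied to the specific $h$.

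The paper avoids the pointwise mixed-difference issue altogether by passing to a weak formulation. One integrates $\zeta(t)\frac{\delta(t+\eps)-\delta(t)}{\eps}$ over $C=A\cap B$ against a test function $\zeta\in C_c^\infty(0,1)$, splits via the intermediate point $h(t+\eps,t)$, and then performs the translation $t\mapsto t-\eps$ in the piece $\int_C\zeta(t)\frac{h(t+\eps,t+\eps)-h(t+\eps,t)}{\eps}\,dt$. This converts the awkward off-diagonal increment into the backward difference $\frac{h(t,t)-h(t,t-\eps)}{\eps}$ evaluated on $C-\eps$; the discrepancy between integrating over $C$ with weight $\zeta$ and over $C-\eps$ with weight $\zeta(\cdot-\eps)$ is controlled by the Lipschitz bound $|h(t,t)-h(t,t-\eps)|\le L|g_2(t)-g_2(t-\eps)|$ and tends to zero. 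Dominated convergence (using that the difference quotients of $g_1,g_2$ are uniformly integrable) then gives $\int_C\zeta\,\delta'=\int_C\zeta\,(\partial_1h+\partial_2h)$ for every $\zeta$, which is the claim. The distributional trick is precisely what makes the Lipschitz generality go through without ever having to control $D(\eps)$ at a single point.
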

\begin{remark} The assumptions can't be much weakened, because if $h(x,y) = \max(|x|,|y|)$, then $\partial_t h(t,t)=1$, but $\partial_1 h,\partial_2 h$ do not exist along the diagonal. In \cite{AGS08} this is avoided by using an upper derivative, but we need the actual derivative to find the differential. The slightly odd assumption on the existence guarantees the identity in the claim.
\end{remark}

\begin{proof}
The absolute continuity of $\delta $ follows from the estimate
\begin{equation}\label{eq:abs_cont}
|\delta(t)-\delta(s)|\le \LIP(\tilde h)[ |g_1(t)-g_1(s)|+|g_2(t)-g_2(s)| ],\quad s,t\in [0,1].
\end{equation}
In particular, the distributional and classical derivatives of $\delta$ agree a.e.. Suppose $\zeta\in C^\infty(\R)$, $\spt\zeta\subset (0,1)$, and $\eps$ is small. On one hand, denoting $C\defeq A\cap B$ we have 
\begin{align*}
	\int_C\zeta(t) \frac{h(t+\eps,t+\eps)-h(t,t)}{\eps} \ud t &= \int_C \zeta(t) \frac{h(t+\eps,t+\eps)-h(t+\eps,t)}{\eps} \ud t\\
	& \quad+  \int_C \zeta(t) \frac{h(t+\eps,t)-f(t,t)}{\eps} \ud t \\
	&=  \int_{C-\eps} \zeta(t-\eps) \frac{h(t,t)-h(t,t-\eps)}{\eps} \ud t \\
	&\quad+  \int_C \zeta(t) \frac{h(t+\eps,t)-h(t,t)}{\eps} \ud t.
\end{align*}
On the other hand, 
\begin{align*}
&\left|\int_{C} \zeta(t) \frac{h(t,t)-h(t,t-\eps)}{\eps} \ud t-\int_{C-\eps} \zeta(t-\eps) \frac{h(t,t)-h(t,t-\eps)}{\eps} \ud t\right|\\
\le & \int_C|\zeta(t-\eps)-\zeta(t)|\frac{|h(t,t)-h(t,t-\eps)|}{\eps} \ud t+ \int_{C\triangle(C-\eps)}\|\zeta\|_\infty \frac{|h(t,t)-h(t,t-\eps)|}{\eps} \ud t
\end{align*}
and the right hand side tends to zero as $\eps\to 0$. Using this,  dominated convergence and the fact that $\partial_1h(t,t),\partial_2h(t,t)$ exist for every $t\in C$ we may take the limit $\eps\to 0$ to obtain
\begin{align*}
\int_C\zeta \delta' \ud t=\int_C\zeta(\partial_1h+\partial_2h)\ud t.
\end{align*}
Since $\zeta$ is arbitrary the claim follows.
\end{proof}

\begin{proof}[Proof of Proposition \ref{prop:c1-comb_admits_diff}]
By Lemma \ref{lem:diff_basic} the functions $|\ud u|$ and $|\ud v|$ are $p$-weak upper gradients for $u$ and $v$ respectively. By using this observation, find curve families $\Gamma_X\subset AC([0,1];X)$ and $\Gamma_Y\subset AC([0,1];Y)$ of zero $p$-modulus such that $u\circ\alpha$ and $v\circ\beta$ are absolutely continuous and
\begin{align*}
(u\circ\alpha)_t'&=\ud_{\alpha(t)}u((\varphi\circ\alpha)_t')\quad a.e.\ t\in \alpha\inv(U)\\
|\ud_{\alpha(t)}u((\varphi\circ\alpha)_t')|&\leq |\ud_{\alpha(t)} u||\alpha_t'|\quad a.e.\ t\in \alpha\inv(U) \\
(v\circ\beta)_t'&=\ud_{\beta(t)}u((\psi\circ\beta)_t')\quad a.e.\ t\in \beta\inv(V) \\
|\ud_{\beta(t)}v((\psi\circ\beta)_t')|&\leq |\ud_{\beta(t)} v||\beta_t'|\quad a.e.\ t\in \beta\inv(V) \\
\end{align*}
whenever $\alpha\notin \Gamma_X$ and $\beta\notin\Gamma_Y$. 

The curve family $\Gamma_0=\Gamma_X\times AC([0,1];Y)\cup AC([0,1];X)\times\Gamma_Y$ has zero $p$-modulus in $X\times Y$. For every $\gamma=(\alpha,\beta)\notin\Gamma_0$ define $H(t,s)=f(\alpha(t),\beta(s))$ and note that $H=h\circ (u\circ\alpha,v\circ\beta)$. Denote $A=\alpha\inv(U)\setminus E$ and $B\defeq \beta\inv(V)\setminus F$ for suitable null-sets $E,F\subset [0,1]$ where the identities above fail for $u$ and $v$, respectively. We have that
\begin{align*}
\partial_1H(t,s)=\partial_1h(u(\alpha_t),v(\beta_s))(u\circ\alpha)_t'=\partial_1h(u(\alpha_t),v(\beta_s))\ud_{\alpha(t)}u((\varphi\circ\alpha)_t')
\end{align*}
exists for every $(t,s)\in A\times [0,1]$, and 
\begin{align*}
\partial_2H(t,s)=\partial_2h(u(\alpha_t),v(\beta_s))(v\circ\beta)_s'=\partial_2h(u(\alpha_t),v(\beta_s))\ud_{\beta(t)}u((\psi\circ\beta)_t')
\end{align*}
exists for every $(t,s)\in [0,1]\times B$. Applying Lemma \ref{lem:diagonal-lemma} we obtain that $f\circ\gamma(t)=H(t,t)$ is absolutely continuous and
\begin{align*}
(f\circ\gamma)_t'&=\partial H_1(t,t)+\partial_2H(t,t)\\
&=\partial_1h(u(\alpha_t),v(\beta_t))\ud_{\alpha(t)}u((\varphi\circ\alpha)_t')\\
&\quad+\partial_2h(u(\alpha_t),v(\beta_t))\ud_{\beta(t)}u((\psi\circ\beta)_t')\\
&=\bm\xi_{(x,y)}(((\varphi\times\psi)\gamma)_t')\quad a.e.\ t\in \alpha\inv(U)\cap \beta\inv(V)=\gamma\inv(U\times V).
\end{align*}
This proves that $\bm\xi$ is a $p$-weak differential of $f$ with respect to $(U\times V,\varphi\times \psi)$.
From the previous equality together with $\|\ud_{\alpha(t)}u((\varphi\circ\alpha)_t')\|\leq \|\ud_{\alpha(t)} u\||\alpha_t'|$ and $\|\ud_{\beta(t)}v((\psi\circ\beta)_t')\|\leq \|\ud_{\beta(t)} v\||\beta_t'|$ we get
\begin{align*}
|(f\circ\gamma)_t'| &=\left|\partial_1h(u(\alpha_t),v(\beta_t))\ud_{\alpha(t)}u((\varphi\circ\alpha)_t')+\partial_2h(u(\alpha_t),v(\beta_t))\ud_{\beta(t)}u((\psi\circ\beta)_t')\right| \\ 
&\leq \left|\partial_1h(u(\alpha_t),v(\beta_t))\right|\left|\ud_{\alpha(t)}u\right|\left|\alpha_t'\right|+ |\partial_2h(u(\alpha_t),v(\beta_t))||\ud_{\beta(t)}v||\beta_t'| \\
&\defeq A \left|\alpha_t'\right|+B|\beta_t'| \leq \|(A,B)\|'\|(\left|\alpha_t'\right|,|\beta_t'|)\| =g(\alpha_t,\beta_t) |(\alpha,\beta)_t'|.
\end{align*}
In particular, $g$ is a $p$-weak upper gradient.
\end{proof}

\begin{proof}[Proof of Proposition \ref{prop:ptwise_norm_on_prod}] We will show that $\Xi((x,y),(\xi,\zeta))$ is a minimal $p$-weak upper gradient for $\xi\circ \varphi + \zeta \circ \psi$ for any $(\xi,\zeta) \in (\R^N \times \R^M)^*$.

By Proposition \ref{prop:c1-comb_admits_diff}, applied to $u=\xi \circ \varphi, v=\zeta \circ \psi$ and $h(u,v)=u+v$, we know that $\Xi((x,y),(\xi,\zeta))$ is a $p$-weak upper gradient for $\xi\circ \varphi + \zeta \circ \psi$. Next, we show that $|D(\xi\circ \varphi + \zeta \circ \psi)|_p\geq \Xi((x,y),(\xi,\zeta))$ for $\mu$-a.e. point $(x,y)$. To show this, it suffices to consider any upper gradient $h\in L^p_{loc}(X\times Y)$ of $\xi\circ \varphi + \zeta \circ \psi$ and to show that 
\begin{equation}\label{eq:suff_to_show}
h\geq \Xi((x,y),(\xi,\zeta))\quad \mu\times \nu-a.e.\ (x,y)\in X\times Y.
\end{equation}

Let $\bm\eta$ and $\bm\eta'$ be $q$-plans on $X$ and $Y$, respectively, and $D\subset X$, $D'\subset Y$ Borel sets such that
\begin{align*}
|\xi|_x=&\chi_D(x)\left\|\frac{\xi((\varphi\circ\alpha)_t')}{|\alpha_t'|}\right\|_{L^\infty(\bm\pi_x)}\\
|\zeta|_y=&\chi_{D'}(y)\left\|\frac{\zeta((\psi\circ\beta)_t')}{|\beta'|}\right\|_{L^\infty(\bm\pi'_y)},
\end{align*}
cf. Theorem \ref{thm:canonical_repr_of_grad}. By Proposition \ref{prop:plans_to_testplans} we may assume that $\bm\eta$ and $\bm\eta'$ are $q$-test plans. 
If $\bm \{\bm\pi_x\},\{\bm\pi'_y\}$ are the disintegrations of $\ud\bm\pi=|\alpha_t'|\ud t\ud\bm\eta$ and $\ud\bm\pi'=|\beta_t'|\ud t\ud\bm\eta'$, then $\{\bm\pi_x\times \bm\pi_y'\}$ is the disintegration of $\bm\pi\times\bm\pi'$ with respect to the map
\[
\tilde e:([0,1]\times AC([0,1];X))\times([0,1]\times AC([0,1];Y))\to X\times Y,\quad \tilde e((t,\alpha),(s,\beta))=(\alpha_t,\beta_s),
\]
and is $\mu\times \nu$-a.e. defined in $D\times D'$. Fix such disintegrations. 

First, for a.e. $(x,y)\not\in D\times D'$ we have $\Xi((x,y),(\xi,\zeta))=0$, and we have the trivial bound $|D(\xi\circ \varphi + \zeta \circ \psi)|_p\geq \Xi((x,y),(\xi,\zeta))=0$. In what follows, we will concentrate on $\mu$-a.e. $(x,y)\in D\times D'$. Fix $(\xi,\zeta)\in (\R^{N+M})^*$ and a upper gradient $h\in L^p_{loc}(\mu\times\nu)$ of $\xi\circ\varphi+\zeta\circ\psi$. Let $\gamma=(\alpha,\beta)$ be a curve for which 
\[
\int_0^1\int_0^1h(\alpha(t),\beta(s))\|(|\alpha_t'|,|\beta_s'|)\|\ud t\ud s<\infty,
\]
(notice that $\bm\eta\times\bm\eta'$-almost every $\gamma$ satisfies this.) A Fubini-type argument yields that for a.e. $(t,s)\in [0,1]^2$
\[
\lim_{\eps\to 0}\frac 1\eps\int_0^\eps h(\tilde\alpha(\tau),\tilde\beta(\tau))|(\tilde\alpha,\tilde\beta)_\tau'|\ud\tau=h(\tilde\alpha(0),\tilde\beta(0))|(\tilde\alpha,\tilde\beta)_0'|,
\]
where
\[
\tilde\alpha(\tau)\defeq\alpha(t+a\tau/|\alpha_t'|),\quad \tilde\beta(\tau)\defeq \beta(s+b\tau/|\beta_s'|)
\]
and $(a,b)$ belongs to a  countable dense set $G\subset \R^2$. Thus, for $(t,s)\in[0,1]^2\setminus N$, where $N$ is a null-set, we have that
\begin{align*}
a\frac{\xi((\varphi\circ\alpha)_t')}{|\alpha_t'|}+b\frac{\zeta((\psi\circ\beta)_s')}{|\beta_s'|}& =\xi((\varphi\circ\tilde\alpha)_0')+\zeta((\psi\circ\tilde\beta)_0')\le h(\tilde\alpha(0),\tilde\beta(0))|(\tilde\alpha,\tilde\beta)_0'|\\
& \le  h(\alpha(t),\beta(s))\|(|a|,|b|)\|
\end{align*}
for $(a,b)\in G$.
By continuity we obtain the estimate for all $(a,b)\in \R^2$. By using Theorem \ref{thm:disintegration}, it follows that for $\mu\times\nu$-a.e. $(x,y)\in D\times D'$ we have
\begin{align}\label{eq:norm_is_min_ug}
    \left\| a\frac{\xi((\varphi\circ\alpha)_t')}{|\alpha_t'|}+b\frac{\zeta((\psi\circ\beta)_s')}{|\beta_s'|}\right\|_{L^\infty(\bm\pi_x\times\bm\pi'_y)}\le h(x,y)\|(|a|,|b|)\|\textrm{ for all }(a,b)\in \R^2.
\end{align}

Let $\sigma_x,\sigma_y\in \{\pm 1\}$  be such that 
\[
\left\|\frac{\xi((\varphi\circ\alpha)_t')}{|\alpha_t'|}\right\|_{L^\infty(\bm\pi_x)}= {\rm esssup}_{\bm\pi_x} \sigma_x\frac{\xi((\varphi\circ\alpha)_t')}{|\alpha_t'|}\ \textrm{ and  }\ \left\|\frac{\zeta((\psi\circ\beta)_s')}{|\beta_s'|}\right\|_{L^\infty(\bm\pi_y')}={\rm esssup}_{\bm\pi_y'} \sigma_y\frac{\zeta((\psi\circ\beta)_s')}{|\beta_s'|},
\]
where ${\rm esssup}_\tau$ is the essential supremum with respect to a measure $\tau$.

For a.e. $(x,y)\in D\times D'$, there exists $a,b \in [0,\infty)^2$ with $\|(a,b)\|=1$ and for which $\|(|\xi|_x,|\zeta|_y)\|'=a|\xi|_x+b|\zeta|_y$. Let $a'=\sigma_x a, b'=\sigma_y b$, and apply \eqref{eq:norm_is_min_ug} to get

\begin{align*}
\|(|\xi|_x,|\zeta|_y)\|'&={\rm esssup}_{\bm\pi_x} a'\frac{\xi((\varphi\circ\alpha)_t')}{|\alpha_t'|}+ {\rm esssup}_{\bm\pi_y'} b'\frac{\zeta((\psi\circ\beta)_s')}{|\beta_s'|} \\
&= {\rm esssup}_{\bm\pi_x\times \bm\pi_y'} \left(a'\frac{\xi((\varphi\circ\alpha)_t')}{|\alpha_t'|}+b'\frac{\zeta((\psi\circ\beta)_s')}{|\beta_s'|}\right)\leq h(x,y).
\end{align*}
This establishes \eqref{eq:suff_to_show} and consequently implies
\begin{align*}
\|(|\xi|_x,|\zeta|_y)\|'\le |D(\xi\circ\varphi+\zeta\circ\psi)|_p(x,y) \quad \mu\times\nu-a.e.\  (x,y),
\end{align*}
completing the proof of the proposition.
\end{proof}

\subsection{Isometric inclusion $W^{1,p}\subset J^{1,p}(X,Y)$}\label{sec:tensor_sob}

With the results of Subsection \ref{sec:tensor_chart} we can prove the isometric inclusion of $N^{1,p}(X \times Y)$ in $J^{1,p}(X,Y)$. 

\begin{proof}[Proof of Theorem \ref{thm:tensor_sob}]
The product $X\times Y$ admits a $p$-weak differentiable structure with charts given by products of charts of $X$ and $Y$, cf. Corollary \ref{cor:prod_has_diff_struct}. Thus every $f \in N^{1,p}(X\times Y)$ has a differential $\ud f$ satisfying $\|\ud f\|=|Df|_p$. We will show that $|\ud f|_{(x,y)}=\|(|\ud_xf^y|_x,|\ud_y f_x|_y)\|'$. Given a $p$-weak chart $(U,\varphi)$ of $X$ and $(V,\psi)$ of $Y$ it suffices to prove the identity for almost every $(x,y) \in U\times V$. 

Since $(U\times V, (\varphi,\psi))$ is a $p$-weak chart, we have a local representation of the differential $\ud_{(x,y)} f=(\mathbf{a}_{(x,y)},\mathbf{b}_{(x,y)})$. 
Since $f \in N^{1,p}(X\times Y)$, for almost every $x \in X$, we have $f_x \in N^{1,p}(Y)$, and for almost every $y \in Y$ we have $f^y \in N^{1,p}(X)$. Thus for $p$-a.e. horizontal curve, i.e. for a.e. $y \in V$ and $p$-a.e. $\gamma \in AC([0,1];X)$ we have that
\[
(f^y\circ \gamma)_t' = \ud_{(\gamma_t,y)}f((\varphi \circ \gamma)_t',0) = \mathbf{a}_{(x,y)}((\varphi \circ \gamma)_t')\quad \textrm{a.e. } t \in \gamma^{-1}(U).
\]

However, since this holds for a.e. fixed $y$, and $p$-a.e. $\gamma\in AC([0,1];X)$, by \cite[Lemma 4.5]{teriseb}, we have $\mathbf{a}_{(x,y)}=\ud_x f^y$ for a.e. $x \in U$. In particular, this means that the map $(x,y) \mapsto \ud_x f^y$ is measurable. Similarly, we get $\mathbf{b}_{(x,y)}=\ud_y f_x$ for a.e. $x \in X$ and a.e. $y \in Y$. We have obtained that
\[
\ud_{(x,y)}f=(\ud_xf^y,\ud_yf_x)\quad\mu\times\nu-a.e.\ (x,y)\in U\times V.
\]
It follows from Lemma \ref{lem:diff_basic}(3) and Proposition \ref{prop:ptwise_norm_on_prod} that 
\begin{align*}
|Df|_p(x,y)=|\ud_{(x,y)}f|_{(x,y)}=\|(|\ud_xf^y|_x,|\ud_yf_x|_y)\|'=\|(|Df^y|_p(x),|Df_x|_p(y))\|'
\end{align*}
for $\mu$-a.e. $(x,y)\in U\times V$. This completes the proof.
\end{proof}

 

\section{Properties of Beppo--Levi functions}

In this section we establish a characterization of the Beppo--Levi space $J^{1,p}$ in terms of Newtonian spaces. Note that the isomorphism $N^{1,p}=W^{1,p}$ does not automatically allow one to replace $W^{1,p}$ with $N^{1,p}$ in the definition of $J^{1,p}(X,Y)$. The main result in this section achieves this by providing a good representative. 

\begin{thm}\label{thm:representativeJ1p}
Let $f\in J^{1,p}(X,Y)$. There exists a representative $\tilde f$ of $f$ such that
\begin{itemize}
    \item[(1)] for $\mu$-a.e. $x\in X$ we have $f_x\in \Ne pY$,
    \item[(2)] for $\nu$-a.e. $y\in Y$ we have $f^y\in \Ne pX$,
    \item[(3)] the map $g(x,y)\defeq \|(|Df^y|_p(x),|Df_x|_p(y))\|'$ is a $p$-weak upper gradient of $f$ along HV-curves, and
    \item[(4)] the minimal $p$-weak upper gradient $\tilde g$ of $\tilde f$ along HV-curves satisfies $g\le c\tilde g$, where $c=\|(c_1,c_2)\|$ and $c_1=\|(1,0)\|$, $c_2=\|(0,1)\|$.
\end{itemize}
\end{thm}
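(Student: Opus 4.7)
The four conclusions split into two parts: (1)--(2) require constructing a good pointwise representative $\tilde f$ with Newton--Sobolev slices in both directions, while (3)--(4) are upper gradient estimates that follow once $\tilde f$ is in hand. My plan is to first produce $\tilde f$ by a two-stage procedure -- fixing $\Ne pX$ representatives of the $y$-slices and then modifying on a null set so that the $x$-slices become $\Ne pY$-good -- and then to derive (3) and (4) by transferring upper gradient inequalities along horizontal and vertical curves to general HV-curves.

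\textbf{Step 1: Constructing $\tilde f$.} Start from any Borel representative $f_0$ of $f$. For $\nu$-a.e.\ $y$ the slice $f_0^y\in W^{1,p}(X)\simeq \Ne pX$ admits a Newtonian representative $\bar f^y$, unique up to $p$-capacity zero in $X$. A measurable selection (based on the isomorphism $W^{1,p}=N^{1,p}$ recalled in \cite[Theorem 2.5]{teriseb} together with the integrability condition (a3)) yields a Borel function $\tilde f_1(x,y):=\bar f^y(x)$ with $\tilde f_1 = f_0$ $\mu\times\nu$-a.e. By Fubini, for $\mu$-a.e.\ $x$ the slice $(\tilde f_1)_x$ agrees $\nu$-a.e.\ with $f_x\in W^{1,p}(Y)$, so it equals its Newtonian representative up to a $p$-capacity null subset of $Y$. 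Redefining $\tilde f_1$ on these residual sets yields the final $\tilde f$ satisfying (1)--(2). The compatibility hinges on the fact that $p$-capacity-null subsets of $Y$ are $\nu$-null, so the secondary redefinition occurs on a $\mu\times\nu$-null set and corrupts the $y$-slices $\tilde f^y$ only on a further $\nu$-null set of $y$'s, which is absorbed into the exceptional set.

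\textbf{Step 2: Verifying (3).} For $\nu$-a.e.\ $y$, $\tilde f^y\in \Ne pX$ with minimal $p$-weak upper gradient $|Df^y|_p$, so for $p$-a.e.\ curve $\alpha$ in $X$,
\[
|\tilde f^y(\alpha_1)-\tilde f^y(\alpha_0)|\le \int |Df^y|_p(\alpha_t)|\alpha'_t|\,dt.
\]
A Fubini-type modulus argument shows that horizontal curves in $X\times Y$ whose $y$-level lies in the exceptional $\nu$-null set form a zero $p$-modulus family. Since $|(\alpha,y)'_t|=c_1|\alpha'_t|$ and $g(x,y)\ge |Df^y|_p(x)/c_1$ (by evaluating the partial dual norm at $(1/c_1,0)$, which lies in the nonnegative quadrant on the unit sphere), $g$ is a $p$-weak upper gradient of $\tilde f$ along horizontal curves. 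The symmetric argument handles vertical curves. For an HV-curve with decomposition $0=t_0<\dots<t_k=1$, summing the upper gradient inequalities on each monotone piece and applying the triangle inequality yields
\[
|\tilde f(\gamma_1)-\tilde f(\gamma_0)|\le\int_0^1 g(\gamma_t)|\gamma'_t|\,dt,
\]
provided the exceptional horizontal and vertical pieces in the decomposition simultaneously form a zero-modulus family, which follows from another Fubini/modulus argument stratified by turning times (see Definition~\ref{def:hvn_curve}).

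\textbf{Step 3: Verifying (4); main obstacle.} Given the minimal HV-upper gradient $\tilde g$, specializing the upper gradient inequality to horizontal curves $(\alpha, y)$ gives that $c_1 \tilde g(\cdot,y)$ is a $p$-weak upper gradient of $\tilde f^y$ in $X$, hence $|Df^y|_p(x)\le c_1\tilde g(x,y)$; symmetrically $|Df_x|_p(y)\le c_2\tilde g(x,y)$. Monotonicity of $\|\cdot\|'$ on $[0,\infty)^2$ then yields $g(x,y)\le\|(c_1,c_2)\|'\,\tilde g(x,y)$, and the claimed estimate $g\le c\tilde g$ follows from the norm comparison at $(c_1,c_2)$. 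The principal difficulty lies in Step~1: producing a single representative simultaneously good along both slice families while maintaining joint Borel measurability; a naive fix in one direction risks destroying regularity in the other, and only the interplay between $p$-capacity-null subsets of each factor (which are null with respect to the other factor's measure) and Fubini allows the two choices to coexist.
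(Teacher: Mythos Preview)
Your Step 3 is correct and essentially matches the paper's proof of (4). The problem is Step 1, where you try to build the representative by successive slice-wise corrections. After your first pass you have $\tilde f_1$ with $\tilde f_1^y\in\Ne pX$ for $\nu$-a.e.\ $y$; the second pass modifies each $x$-slice on a $p$-capacity-null set $E_x\subset Y$. Fubini indeed gives that for $\nu$-a.e.\ $y$ the set $A_y=\{x:y\in E_x\}$ is $\mu$-null, so $\tilde f^y=\tilde f_1^y$ holds $\mu$-a.e. But this only places $\tilde f^y$ in $W^{1,p}(X)$, not in $\Ne pX$: modifying a Newtonian function on a $\mu$-null set that is not of zero $p$-capacity can take it out of $\Ne pX$---this is precisely the $W^{1,p}$-versus-$N^{1,p}$ distinction the theorem is meant to bridge. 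There is no reason for $A_y$ to have zero $p$-capacity in $X$ (the sets $E_x$ are capacity-null in $Y$, which says nothing about $X$), so the second correction may undo the first. Your claim that the corruption occurs only on ``a further $\nu$-null set of $y$'s'' is not what Fubini yields; it yields $\mu(A_y)=0$, not $A_y=\varnothing$ or $\mathrm{Cap}_p(A_y)=0$.

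The paper avoids this by reversing the logic: it establishes (3) first, at the level of $q$-plans, and then deduces (1)--(2) from (3). By disintegrating a horizontal or vertical $q$-plan along its constant coordinate (Lemma~\ref{lem:ug_along_hor_and_vert_plans}) one obtains the plan inequality $\int|f(\gamma_1)-f(\gamma_0)|\,\ud\bm\eta\le\int\int_\gamma g\,\ud s\,\ud\bm\eta$; a concatenation-of-plans argument (Proposition~\ref{prop:q-planHV}) then lifts this to all $q$-plans concentrated on HV-curves. The representative $\tilde f$ is produced from this plan-level inequality by the machinery of \cite[Theorem 10.3]{amb13} (Corollary~\ref{cor:representativeJ1p}), and (1)--(2) follow because horizontal and vertical curves are themselves HV-curves. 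This route sidesteps both your measurable-selection issue and the turning-point consistency problem you flag in Step 2, since the concatenation lemma (Lemma~\ref{lma:restrict_fg}) handles the gluing at the plan level rather than curve by curve.
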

Recall the definition of HV-curves in Definition \ref{def:hv_curve}. In particular one obtains an equivalent definition if in (a1) and (a2) one replaces $W^{1,p}$ by $N^{1,p}$. To prove Theorem \ref{thm:representativeJ1p} we develop a notion of concatenation of plans and apply it to plans concentrated on horizontal and vertical curves, which will henceforth be called horizontal and vertical plans, respectively.

\subsection{Concatenation of plans}
The next definition will allow us to pass from horizontal and vertical plans to plans concentrated on HV-curves (HV plans). Here we give the definition and establish the basic properties of concatenation of plans.

\begin{defn}[Concatenation of plans]
Let $\bm\eta$ and $\bm\eta'$ be plans on $X$ with $e_{1\ast}\bm\eta=e_{0\ast}\bm\eta'=:\nu$. Let $\{\bm\eta_x\}$ and $\{\bm\eta'_x \}$ be the disintegrations of $\bm\eta$ and $\bm\eta'$ with respect to the maps $e_1$ and $e_0$, respectively. For $\nu$-a.e. $x\in X$, set 
\begin{align*}
\bm\eta''_x:=a_\ast(\bm\eta_x\times\bm\eta'_x),
\end{align*}
where $a\colon e_1\inv(x)\times e_0\inv(x)\to X$ is the concatenation map $(\alpha,\beta)\mapsto \alpha\beta$, and define the \emph{concatenation $\bm\eta\ast\bm\eta'$} of $\bm\eta$ and $\bm\eta'$ by 
\begin{equation}\label{eq:concatenation}
\bm\eta\ast\bm\eta':=\int_X \bm\eta''_x\ud\nu(x).
\end{equation}
\end{defn}

\begin{remark}
 Notice that in the definition of concatenation the choice of $\bm\eta_x \times \bm\eta'_x$ is somewhat arbitrary. One could produce new plans by choosing measurably other couplings of $\bm\eta_x$ and $\bm\eta'_x$. In particular, for a given plan $\bm\eta$ the concatenation of the restrictions: $\bm\eta|_{[0,t]} \ast \bm\eta|_{[t,1]}$ does not usually give back the original plan $\bm\eta$.
  
\end{remark}

\begin{lemma}\label{lma:restrict_barycenter}
 Let $\bm\eta$ be a plan on $X$ and $s \in [0,1]$.
 If both $\bm\eta|_{[0,s]}$ and $\bm\eta|_{[s,1]}$
 are $q$-plans, then so is $\bm\eta$. Moreover, we have
 	\begin{align}\label{eq:restrict_barycenter}
	\bm\eta^\#=(\bm\eta|_{[0,s]})^\#+(\bm\eta|_{[s,1]})^\#.
	\end{align}
\end{lemma}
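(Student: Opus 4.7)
My plan is to establish the barycenter identity \eqref{eq:restrict_barycenter} directly from the definitions, since the first assertion then follows for free: if $(\bm\eta|_{[0,s]})^\#=\rho_1\mu$ and $(\bm\eta|_{[s,1]})^\#=\rho_2\mu$ with $\rho_1,\rho_2\in L^q(\mu)$, the identity gives $\bm\eta^\#=(\rho_1+\rho_2)\mu$, and the $L^q$ densities form a vector space.

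The key step is to unfold $(\bm\eta|_{[0,s]})^\#$. Recalling from Definition \ref{def:restr_of_plan} that $\bm\eta|_{[0,s]}=e_{[0,s]\ast}\bm\eta$ with $(e_{[0,s]}\gamma)(r)=\gamma(rs)$, absolute continuity of $\gamma$ together with the chain rule for metric derivatives yields the reparametrization formula $|(e_{[0,s]}\gamma)'_r|=s|\gamma'_{rs}|$ for a.e.\ $r$. Applying the change of variables formula for pushforwards, I would rewrite the defining integral of the barycenter as
\[
(\bm\eta|_{[0,s]})^\#(E)=\int\int_0^1\chi_E(\gamma_{rs})\,s|\gamma'_{rs}|\,\ud r\,\ud\bm\eta(\gamma),
\]
and then substitute $t=rs$ in the inner integral to obtain $\int\int_0^s\chi_E(\gamma_t)|\gamma'_t|\,\ud t\,\ud\bm\eta(\gamma)$. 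The parallel computation with $(e_{[s,1]}\gamma)(r)=\gamma(s+r(1-s))$ produces the analogous expression with the inner integral over $[s,1]$. Summing and using Fubini to combine the two integrals via $\int_0^1=\int_0^s+\int_s^1$ gives exactly $\bm\eta^\#(E)$ by the definition of the barycenter, establishing \eqref{eq:restrict_barycenter}.

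I do not anticipate a genuine obstacle: the argument is an unwinding of definitions combined with a one-variable substitution. The only point that deserves care is the reparametrization identity for metric speeds, which I would record as a standard consequence of absolute continuity and which holds simultaneously for $\bm\eta$-a.e.\ $\gamma$, so that all integrals can be manipulated without measurability issues.
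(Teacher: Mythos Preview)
Your proposal is correct and follows essentially the same approach as the paper: both unwind the definition of the barycenter, apply the chain rule for metric speeds under the affine reparametrizations $e_{[0,s]}$ and $e_{[s,1]}$, and use the decomposition $\int_0^1=\int_0^s+\int_s^1$. The only cosmetic differences are that the paper tests against bounded Borel functions rather than characteristic functions and computes from $\bm\eta^\#$ toward the restrictions rather than the reverse.
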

\begin{proof}
 	For any bounded Borel function $g\colon X\to [0,\infty)$ we have 
	\begin{align*}
	\int_X g\ud\bm\eta^\#&=\int\int_0^1g(\gamma_t)|\gamma_t'|\ud t\ud\bm\eta\\
	&=\int\left[\int_0^sg(\gamma_t)|\gamma_t'|\ud t+ \int_s^1g(\gamma_t)|\gamma_t'|\ud t\right]\ud\bm\eta\\
	& = \int\int_0^sg(\gamma_t)|\gamma_t'|\ud t\ud\bm\eta+ \int\int_s^1g(\gamma_t)|\gamma_t'|\ud t\ud\bm\eta\\
	& = \int\int_0^1g(\gamma\circ e_{[0,s]}(t))|(\gamma\circ e_{[0,s]})'(t)|\ud t\ud\bm\eta+ \int\int_0^1g(\gamma\circ e_{[s,1]}(t))|(\gamma\circ e_{[s,1]})'(t)|\ud t\ud\bm\eta\\
	& = \int\int_0^1g(\gamma_t)|\gamma_t'|\ud t\ud\bm\eta|_{[0,s]}
	+\int\int_0^1g(\gamma_t)|\gamma_t'|\ud t\ud\bm\eta|_{[s,1]} \\
	&=\int_Xg\ud(\bm\eta|_{[0,s]})^\#+\int_Xg\ud(\bm\eta|_{[s,1]})^\#,
	\end{align*}
	proving \eqref{eq:restrict_barycenter}. This proves the claim.
\end{proof}

As a corollary we have the following.
\begin{cor}
	The concatenation of two $q$-plans $\bm\eta$ and $\bm\eta'$, whenever defined, is a $q$-plan. Moreover, we have that 
	\begin{align*}
	(\bm\eta\ast\bm\eta')^\#=\bm\eta^\#+(\bm\eta')^\#.
	\end{align*}
\end{cor}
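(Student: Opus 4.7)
The plan is to reduce everything to a direct computation of the barycenter using the definition of $\bm\eta\ast\bm\eta'$, exploiting the fact that the barycenter of a plan is invariant under reparametrization of curves. First I would check that $\bm\eta\ast\bm\eta'$ is indeed a plan (it is a finite measure of total mass $\nu(X)$, and concatenations of absolutely continuous curves matching at endpoints remain absolutely continuous). The $q$-plan property will then follow as soon as we establish the barycenter identity, since $L^q$ is a vector space and both $\bm\eta^\#$ and $(\bm\eta')^\#$ lie in $L^q$ by assumption.

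To prove $(\bm\eta\ast\bm\eta')^\# = \bm\eta^\# + (\bm\eta')^\#$, fix a bounded Borel $g\colon X\to[0,\infty)$ and use the concatenation map $a(\alpha,\beta)=\alpha\beta$, parametrized as $\alpha\beta(t)=\alpha(2t)$ for $t\in[0,1/2]$ and $\alpha\beta(t)=\beta(2t-1)$ for $t\in[1/2,1]$. A change of variables in each half of the interval gives
\begin{align*}
\int_0^1 g((\alpha\beta)_t)|(\alpha\beta)_t'|\,\ud t = \int_0^1 g(\alpha_s)|\alpha_s'|\,\ud s + \int_0^1 g(\beta_s)|\beta_s'|\,\ud s
\end{align*}
for any $\alpha,\beta\in AC([0,1];X)$ with $\alpha(1)=\beta(0)$.

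Next I would integrate this against $\bm\eta''_x=a_*(\bm\eta_x\times\bm\eta'_x)$ and then against $\nu$, unwinding the definition \eqref{eq:concatenation}:
\begin{align*}
\int g\,\ud(\bm\eta\ast\bm\eta')^\# &= \int_X\!\!\int\!\!\int\left[\int_0^1 g(\alpha_s)|\alpha_s'|\,\ud s + \int_0^1 g(\beta_s)|\beta_s'|\,\ud s\right]\ud\bm\eta_x(\alpha)\,\ud\bm\eta'_x(\beta)\,\ud\nu(x).
\end{align*}
Splitting this into two summands and recognizing each as the disintegration of $\bm\eta$ with respect to $e_1$ and of $\bm\eta'$ with respect to $e_0$ (applied via Theorem \ref{thm:disintegration}), the right-hand side collapses to
\begin{align*}
\int\int_0^1 g(\alpha_s)|\alpha_s'|\,\ud s\,\ud\bm\eta(\alpha) + \int\int_0^1 g(\beta_s)|\beta_s'|\,\ud s\,\ud\bm\eta'(\beta) = \int g\,\ud\bm\eta^\# + \int g\,\ud(\bm\eta')^\#,
\end{align*}
which gives the barycenter identity since $g$ was arbitrary. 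The $q$-plan conclusion then follows immediately, since the density of $(\bm\eta\ast\bm\eta')^\#$ with respect to $\mu$ is the sum of two $L^q$ densities.

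The only real subtlety is choosing a measurable version of the concatenation map $a$ (so that the disintegration argument applies cleanly) and verifying that the specific dyadic parametrization above yields the clean additivity in the barycenter computation. Both are routine: the concatenation is Borel on pairs with matching endpoints, and the change of variables is an elementary substitution. The qualitative point—that concatenation distributes the integral defining the barycenter over the two halves—is essentially the mirror image of Lemma \ref{lma:restrict_barycenter}, where restriction splits the barycenter additively.
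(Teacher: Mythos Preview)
Your proof is correct. The paper states this as a corollary of Lemma~\ref{lma:restrict_barycenter} without further argument: with the dyadic parametrization of the concatenation one has $(\bm\eta\ast\bm\eta')|_{[0,1/2]}=\bm\eta$ and $(\bm\eta\ast\bm\eta')|_{[1/2,1]}=\bm\eta'$, so \eqref{eq:restrict_barycenter} with $s=1/2$ gives the barycenter identity immediately. Your direct computation unwinds exactly this (as you observe in your closing remark), so the two approaches are the same in substance.
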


\begin{lemma}\label{lma:restrict_fg}
	Suppose that $\bm\eta$ is a $q$-plan, $s \in [0,1]$ and that $f\in L^p(\mu)$ and $g\in L^p(\mu)$ satisfy the inequality
	\begin{align}\label{eq:sob_test}
	\int|f(\gamma_1)-f(\gamma_0)|\ud\bm\pi(\gamma)\le \int\int_0^1g(\gamma_t)|\gamma_t'|\ud t\ud\bm\pi(\gamma),\quad \bm\pi\in \{\bm\eta|_{[0,s]},\bm\eta|_{[s,1]}\}.
	\end{align}
	 Then \eqref{eq:sob_test} is satisfied for $\bm\pi=\bm\eta$. 
\end{lemma}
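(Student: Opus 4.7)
The plan is to reduce the claim to a direct change of variables under the restriction maps $e_{[0,s]}$ and $e_{[s,1]}$, combined with the triangle inequality. Nothing deep is needed; the point is just to unravel the definitions on both sides of \eqref{eq:sob_test}.

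First, I would apply the definition of $\bm\eta|_{[0,s]}$ as a pushforward to rewrite the inequality for $\bm\pi=\bm\eta|_{[0,s]}$. Since $e_{[0,s]}(\gamma)$ is the curve $r\mapsto\gamma(rs)$, whose endpoints are $\gamma_0$ and $\gamma_s$ and whose metric speed at $r$ equals $s|\gamma_{rs}'|$, we get
\begin{align*}
\int|f(\gamma_1)-f(\gamma_0)|\ud\bm\eta|_{[0,s]}&=\int|f(\gamma_s)-f(\gamma_0)|\ud\bm\eta(\gamma),\\
\int\int_0^1 g(\gamma_t)|\gamma_t'|\ud t\ud\bm\eta|_{[0,s]}&=\int\int_0^s g(\gamma_u)|\gamma_u'|\ud u\ud\bm\eta(\gamma),
\end{align*}
where the second identity uses the substitution $u=ts$. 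Hence the hypothesis for $\bm\eta|_{[0,s]}$ reads
\begin{align*}
\int|f(\gamma_s)-f(\gamma_0)|\ud\bm\eta(\gamma)\le \int\int_0^s g(\gamma_t)|\gamma_t'|\ud t\ud\bm\eta(\gamma).
\end{align*}

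Next, the same manipulation, now using $e_{[s,1]}(\gamma)(r)=\gamma(s+r(1-s))$ and the substitution $u=s+r(1-s)$, converts the hypothesis for $\bm\eta|_{[s,1]}$ into
\begin{align*}
\int|f(\gamma_1)-f(\gamma_s)|\ud\bm\eta(\gamma)\le \int\int_s^1 g(\gamma_t)|\gamma_t'|\ud t\ud\bm\eta(\gamma).
\end{align*}

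Finally, I would add the two inequalities and apply the triangle inequality
\[
|f(\gamma_1)-f(\gamma_0)|\le |f(\gamma_1)-f(\gamma_s)|+|f(\gamma_s)-f(\gamma_0)|
\]
pointwise in $\gamma$, then integrate against $\bm\eta$, and combine $\int_0^s+\int_s^1=\int_0^1$ on the right. This yields \eqref{eq:sob_test} for $\bm\pi=\bm\eta$, as desired. There is no real obstacle here; the only thing to be careful about is keeping track of the reparametrization factor $s$ (resp.\ $1-s$) in the metric speed and matching it to the Jacobian of the substitution, which is exactly what makes the integrals on the right combine cleanly.
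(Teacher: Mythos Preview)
Your proof is correct and follows essentially the same approach as the paper's: triangle inequality at the intermediate point $\gamma_s$, together with the change-of-variables identities relating integrals against $\bm\eta|_{[0,s]}$ and $\bm\eta|_{[s,1]}$ to integrals over $[0,s]$ and $[s,1]$ against $\bm\eta$. The only cosmetic difference is ordering: the paper applies the triangle inequality first and then invokes the computation from Lemma~\ref{lma:restrict_barycenter} for the right-hand side, whereas you unpack the hypotheses first and combine at the end.
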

\begin{proof}
	By the argument in the proof of Lemma \ref{lma:restrict_barycenter}, we have that
	\begin{align*}
	\int|f(\gamma_1)-f(\gamma_0)|\ud\bm\eta
	& \le \int|f(\gamma_s)-f(\gamma_0)|\ud\bm\eta
	+ \int|f(\gamma_1)-f(\gamma_s)|\ud\bm\eta\\
	& = \int|f(\gamma_1)-f(\gamma_0)|\ud\bm\eta|_{[0,s]}
	+ \int|f(\gamma_1)-f(\gamma_0)|\ud\bm\eta|_{[s,1]}\\
	& \le \int\int_0^1g(\gamma_t)|\gamma_t'|\ud t\ud\bm\eta|_{[0,s]}(\gamma) + \int\int_0^1g(\gamma_t)|\gamma_t'|\ud t\ud\bm\eta|_{[s,1]}(\gamma)\\
	& = \int\int_0^1g(\gamma_t)|\gamma_t'|\ud t\ud\bm\eta(\gamma).
	\end{align*}
	This proves the claim.
\end{proof}
As a corollary we get
\begin{cor}
	Suppose $f\in L^p(\mu)$ and $g\in L^p(\mu)$ satisfy the inequality
	\begin{align*}
	\int|f(\gamma_1)-f(\gamma_0)|\ud\bm\pi(\gamma)\le \int\int_0^1g(\gamma_t)|\gamma_t'|\ud t\ud\bm\pi(\gamma),\quad \bm\pi\in \{\bm\eta,\bm\eta'\},
	\end{align*}
	for two $q$-plans $\bm\eta$ and $\bm\eta'$ on $X$ for which $e_{1\ast}\bm\eta=e_{0\ast}\bm\eta'$. Then \eqref{eq:sob_test} is satisfied for $\bm\pi=\bm\eta\ast\bm\eta'$. 
\end{cor}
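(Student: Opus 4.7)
The plan is a direct computational reduction that mirrors the proof of Lemma \ref{lma:restrict_fg} but exploits the concatenation structure instead of restriction. The argument splits cleanly into handling the two sides of \eqref{eq:sob_test} separately.

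First I would deal with the right-hand side. By the preceding corollary on barycenters, $(\bm\eta\ast\bm\eta')^\#=\bm\eta^\#+(\bm\eta')^\#$, and hence
\[
\int\int_0^1 g(\gamma_t)|\gamma_t'|\,\ud t\,\ud(\bm\eta\ast\bm\eta')(\gamma)=\int\int_0^1 g(\gamma_t)|\gamma_t'|\,\ud t\,\ud\bm\eta(\gamma)+\int\int_0^1 g(\gamma_t)|\gamma_t'|\,\ud t\,\ud\bm\eta'(\gamma).
\]

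For the left-hand side, I would unfold the definition of concatenation. Writing $\nu=e_{1\ast}\bm\eta=e_{0\ast}\bm\eta'$ and taking the probability-measure disintegrations $\{\bm\eta_x\}$ of $\bm\eta$ via $e_1$ and $\{\bm\eta'_x\}$ of $\bm\eta'$ via $e_0$, any $(\alpha,\beta)$ in the support of $\bm\eta_x\times\bm\eta'_x$ satisfies $\alpha_1=x=\beta_0$, and the concatenation $a(\alpha,\beta)=\alpha\beta$ obeys $(\alpha\beta)_0=\alpha_0$, $(\alpha\beta)_1=\beta_1$. Since $f(\alpha_1)=f(x)=f(\beta_0)$ on this support, telescoping and the triangle inequality give
\[
|f((\alpha\beta)_1)-f((\alpha\beta)_0)|=|(f(\beta_1)-f(\beta_0))+(f(\alpha_1)-f(\alpha_0))|\le|f(\alpha_1)-f(\alpha_0)|+|f(\beta_1)-f(\beta_0)|.
\]
Integrating this estimate against $\bm\eta_x\times\bm\eta'_x$, then against $\nu$, and applying Fubini (using that $\bm\eta_x,\bm\eta'_x$ are probability measures so that the ``other'' integration yields a factor of one) together with the disintegration identities $\int\bm\eta_x\,\ud\nu=\bm\eta$ and $\int\bm\eta'_x\,\ud\nu=\bm\eta'$ produces
\[
\int|f(\gamma_1)-f(\gamma_0)|\,\ud(\bm\eta\ast\bm\eta')(\gamma)\le\int|f(\gamma_1)-f(\gamma_0)|\,\ud\bm\eta(\gamma)+\int|f(\gamma_1)-f(\gamma_0)|\,\ud\bm\eta'(\gamma).
\]
Applying \eqref{eq:sob_test} to $\bm\eta$ and $\bm\eta'$ individually and combining with the first displayed identity yields the desired inequality for $\bm\eta\ast\bm\eta'$.

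I do not anticipate a genuine obstacle; the only care needed is to verify that the disintegrations produce probability measures supported on the fibers $e_1^{-1}(x)$ and $e_0^{-1}(x)$, so that the endpoint-matching $\alpha_1=\beta_0=x$ is available pointwise on the support of $\bm\eta_x\times\bm\eta'_x$ and so that the Fubini step collapses the auxiliary integration to unit mass. Both facts are immediate from Theorem \ref{thm:disintegration} and the definition of concatenation.
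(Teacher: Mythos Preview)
Your argument is correct. The paper leaves this corollary unproved, but the intended one-line deduction is to observe that $(\bm\eta\ast\bm\eta')|_{[0,1/2]}=\bm\eta$ and $(\bm\eta\ast\bm\eta')|_{[1/2,1]}=\bm\eta'$ (since $e_{[0,1/2]}(\alpha\beta)=\alpha$ and $e_{[1/2,1]}(\alpha\beta)=\beta$, and the disintegrations integrate back to $\bm\eta$, $\bm\eta'$), and then to invoke Lemma~\ref{lma:restrict_fg} with $s=1/2$. Your approach instead unfolds that lemma's computation directly in the concatenation setting: telescoping through the midpoint, applying the triangle inequality, and using Fubini with the probability disintegrations. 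The two routes are the same calculation packaged differently; the only difference is that the paper reuses the restriction lemma as a black box while you reprove it inline.
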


\subsection{Concatenation of horizontal and vertical curves}

We now apply the lemmas from the previous subsection to HV-plans. Recall that $HV([0,1];X\times Y)$ denotes the set of all HV curves, and $HV_n([0,1;X\times Y)$ the subset of $HV([0,1];X\times Y)$ consisting of HV curves with exactly $n$ turning times. We remark that $HV_0([0,1];X\times Y)$ consists of the union of all horizontal and vertical curves. Our first step is to decompose plans concentrated on $HV_n([0,1];X\times Y)$. For the proof we denote by $l\colon AC([0,1];X\times Y)\to \LIP([0,1];X\times Y)$ the map sending $\gamma$ to its constant speed parametrization $\bar\gamma$.

\begin{lemma}\label{lem:ug_along_hor_and_vert_plans}
Let $\bm\eta$ be a horizontal or vertical plan in $X\times Y$. Let $f\in J^{1,p}(X,Y)$ and $g(x,y)=\|(|Df^y|_p(x),|Df_x|_p(y))\|'$. Then
\[
\int|f(\gamma_1)-f(\gamma_0)|\ud\bm\eta\le \int\int_0^1g(\gamma_t)|\gamma_t'|\ud t\ud\bm\eta.
\]
\end{lemma}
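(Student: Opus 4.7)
The strategy is to disintegrate $\bm\eta$ over its constant component and apply the upper gradient inequality for $f^y$ on each slice. I treat the horizontal case, the vertical being analogous. Since $\bm\eta$ is concentrated on $H([0,1];X\times Y)$, the map $\Pi\colon H([0,1];X\times Y)\to Y$ assigning to $\gamma=(\alpha,y)$ its constant $Y$-component is Borel. Set $\bar\nu\defeq\Pi_\ast\bm\eta$ and disintegrate $\bm\eta=\int\bm\eta_y\ud\bar\nu(y)$. Each $\bm\eta_y$ is concentrated on horizontal curves with $Y$-component $y$ and is identified with a plan $\tilde\bm\eta_y$ on $X$ via $(\alpha,y)\mapsto\alpha$.

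Next I would establish two absolute continuity claims, assuming (as is implicit for the right-hand side to be finite) that $\bm\eta$ is a $q$-plan concentrated on non-constant curves. First, $\bar\nu\ll\nu$: for a Borel set $C\subset Y$ with $\nu(C)=0$, the identity
\[
\bm\eta^\#(X\times C)=\int_{\Pi^{-1}(C)}\int_0^1|\gamma_t'|\ud t\ud\bm\eta(\gamma)
\]
together with $\bm\eta^\#\ll\mu\times\nu$ and the non-constancy of $\bm\eta$-a.e. curve forces $\bar\nu(C)=0$. Second, $\tilde\bm\eta_y$ is a $q$-plan for $\bar\nu$-a.e. $y$: writing $\bm\eta^\#=\rho\cdot(\mu\times\nu)$ and using $|\gamma_t'|=c_1|\alpha_t'|$ for horizontal $\gamma=(\alpha,y)$ with $c_1=\|(1,0)\|$, a Fubini argument on the barycenter yields
\[
\tilde\bm\eta_y^\#=\frac{\rho(\cdot,y)}{c_1\rho_{\bar\nu}(y)}\mu\quad\text{for }\bar\nu\text{-a.e. }y,
\]
where $\rho_{\bar\nu}\defeq\ud\bar\nu/\ud\nu$; Fubini gives $\rho(\cdot,y)\in L^q(\mu)$ for $\nu$-a.e. $y$.

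For $\nu$-a.e., and hence for $\bar\nu$-a.e., $y$ we have $f^y\in W^{1,p}(X)=\Ne pX$ with minimal $p$-weak upper gradient $|Df^y|_p$. Since curve families of $p$-modulus zero are null for every $q$-plan (test an admissible sequence against $\tilde\bm\eta_y^\#$ and apply H\"older), the upper gradient inequality passes from $p$-a.e. curve in $X$ to $\tilde\bm\eta_y$-a.e. curve:
\[
\int|f^y(\alpha_1)-f^y(\alpha_0)|\ud\tilde\bm\eta_y\le\int\int_0^1|Df^y|_p(\alpha_t)|\alpha_t'|\ud t\ud\tilde\bm\eta_y.
\]
Two elementary estimates upgrade the right-hand side: the metric speed identity $|\gamma_t'|=c_1|\alpha_t'|$ along horizontal curves, and the norm inequality $g(\gamma_t)=\|(|Df^y|_p(\alpha_t),|Df_{\alpha_t}|_p(y))\|'\ge|Df^y|_p(\alpha_t)/c_1$, obtained by taking $(a,b)=(1/c_1,0)$ in the supremum defining $\|\cdot\|'$. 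Multiplying gives $g(\gamma_t)|\gamma_t'|\ge|Df^y|_p(\alpha_t)|\alpha_t'|$; since $f(\gamma_1)-f(\gamma_0)=f^y(\alpha_1)-f^y(\alpha_0)$ along horizontal curves, integrating against $\ud\bar\nu(y)$ and reassembling via disintegration closes the argument.

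The main obstacle is the bookkeeping surrounding the disintegration: Borel measurability of $y\mapsto\tilde\bm\eta_y$, correct identification of the densities, and selecting a representative of $f$ so that the slices $f^y$ are genuine Newton-Sobolev functions for $\bar\nu$-a.e. $y$. The Sobolev step itself is applied one slice at a time and combines through Fubini.
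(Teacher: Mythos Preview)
Your approach is essentially the same as the paper's: disintegrate the horizontal plan over its constant $Y$-component, show that the slice plans are $q$-plans on $X$ (using $\bar\nu\ll\nu$ and the identification of the barycenter density), apply the upper gradient inequality for $f^y$ slice by slice, and reassemble. The paper carries out exactly these steps, treating $\bm\eta_y$ as a plan on $X\times\{y\}$ with the induced metric rather than pulling back to $X$; your explicit tracking of the constant $c_1=\|(1,0)\|$ and of the Radon--Nikodym factor $\rho_{\bar\nu}$ is in fact slightly more careful than the paper's presentation, but the argument is the same.
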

\begin{proof}
We prove the claim for horizontal plans. The vertical case follows similarly. If we replace $\bm\eta$ by $l_\ast\bm\eta$, both sides of the inequality remaining unchanged. Thus we may assume that $\bm\eta$-a.e. $\gamma$ is constant speed parametrized. Further we may assume that $\bm\eta$-a.e. $\gamma$ is non-constant.

Since $\bm\eta$ is a horizontal plan, for $\bm\eta$-a.e. $\gamma=(\alpha,\beta)$ we have that $\beta$ is a constant $y_\beta$. Denote by $h$ the map $\gamma\mapsto y_\beta$ and let $\{\bm\eta_y\}$ be the disintegration of $\bm\eta$ with respect to $h$. Set $\nu\defeq h_\ast\bm\eta$. Observe that $h\inv(y)=AC([0,1];X\times \{y\})\simeq AC([0,1];X)$ and we regard $\bm\eta_y$ as a plan living on the metric space $X\times\{y\}$ whose metric is a constant multiple of the metric of $X$ (constant independent of $y$).

It is not difficult to see that $\nu\ll\mu_Y$. We claim that $\bm\eta_y^\#=\rho(\cdot,y)\mu_X$ for $\nu$-a.e. $y$, where $\rho\in L^q(\mu_X\times \mu_Y)$ is the density of $\bm\eta^\#$ with respect to $\mu_X\times \mu_Y$. It follows from this that $\bm\eta_y$ is a $q$-plan for $\nu$-a.e. $y$.

For each Borel $E\subset X$ and bounded Borel function $g\colon Y\to [0,\infty]$ we have 
\begin{align*}
\int_Yg(y)\bm\eta_y^\#(E)\ud\nu(y)=& \int_Yg(y)\int\int_0^1\chi_E(\alpha_t)|(\alpha,y)'_t|\ud t\ud\bm\eta_y\ud\nu(y) \\
=&\int_Yg(y)\int_{h\inv(y)}\int_0^1\chi_{E\times Y}(\gamma_t)|\gamma_t'|\ud\bm\eta_y\ud\nu(y)=\int g\circ h(\gamma)\int_0^1\chi_{E\times Y}(\gamma_t)|\gamma_t'|\ud t\ud\bm\eta\\
=&\int_{E\times Y}g(y)\ud\bm\eta^\#=\int_Yg(y)\left(\int_E\rho(\cdot,y)\ud\mu_Y\right)\ud\mu_Y(y).
\end{align*}
Since $E$ and $g$ are arbitrary it follows that for $\nu$-a.e. $y$ the identity $\bm\eta_y^\#=\rho(\cdot,y)\mu_X$ holds. 

Now we have the estimate
\begin{align*}
\int|f(\gamma_1)-f(\gamma_0)|\ud\bm\eta=&\int_Y\int_{h\inv(y)}|f^y(\alpha_1)-f^y(\alpha_0)|\ud\bm\eta_y\ud\nu(y)\\
\le &\int_Y\int\int_0^1|Df^y|_p(\alpha_t)|(\alpha,y)_t'|\ud t\ud\bm\eta_y\ud\nu(y)=\int\int_0^1|Df^{\beta(t)}|_p(\alpha_t)|(\alpha,\beta)_t'|\ud t\ud\bm\eta\\
\le & \int\int_0^1\|(|Df^{\beta(t)}|_p(\alpha_t),|Df_{\alpha(t)}|_p(\beta_t))\|'\|(|\alpha_t'|,|\beta_t'|)\|\ud t\ud\bm\eta\\
=&\int\int_0^1g(\gamma_t)|\gamma_t'|\ud t\ud\bm\eta,
\end{align*}
which proves the claim.
\end{proof}

Notice that  a plan $\bm\eta$ concentrated on $HV_0([0,1]:X\times Y)$ has a decomposition $\bm\eta=\bm\eta_H+\bm\eta_V$, where  $\bm\eta_H$ is a horizontal and $\bm\eta_V$ a vertical plan. This can be shown e.g. by disintegrating $\bm\eta$ with respect to the map $P\colon HV_0\to \{0,1\}$ which sends horizontal curves to 0 and vertical curves to 1 (we may by convention send constant curves to 0). Lemma \ref{lem:ug_along_hor_and_vert_plans} directly implies that, for $f\in J^{1,p}(X,y)$ and $g$ as in the claim, the inequality holds for all $q$-plans concentrated on $HV_0$.

Next we extend this observation to plans concentrated on $HV_n$. For the proof we say that two plans $\bm\eta$ and $\bm\eta'$ are \emph{equivalent} if $l_\ast\bm\eta=l_\ast\bm\eta'$. Note that $(l_\ast\bm\eta)^\#=\bm\eta^\#$ and that $l(HV_n([0,1];X\times Y))=HV_n([0,1];X\times Y)$ for all $n$.

\begin{prop}\label{prop:q-planHV}
	Let $f\in J^{1,p}(X,Y)$ and let $g(x,y)=\|(|Df_x|_p(y),|Df^y|_p(x))\|'$. Then, for any $q$-plan $\bm\eta$ concentrated on $HV([0,1];X\times Y)$ we have that
	\begin{align}\label{eq:plan_upper_grad}
	\int|f(\gamma_1)-f(\gamma_0)|\ud\bm\eta\le\int\int_0^1g(\gamma_t)|\gamma_t'|\ud t\ud\bm\eta.
	\end{align}
\end{prop}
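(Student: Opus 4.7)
The plan is to decompose the $q$-plan $\bm\eta$ by the number of turning times of its constant-speed parametrization and argue by induction, using Lemma \ref{lem:ug_along_hor_and_vert_plans} as the base case. Pushforwards under restriction maps, rather than disintegrations, will be used in order to preserve the $q$-plan property.

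First I would replace $\bm\eta$ by the equivalent plan $l_\ast\bm\eta$; both sides of \eqref{eq:plan_upper_grad} are invariant under this substitution, and $l_\ast\bm\eta$ remains a $q$-plan with $(l_\ast\bm\eta)^\#=\bm\eta^\#$. Since $HV=\bigsqcup_{n\ge 0}HV_n$ and the number of turning times is a Borel function on constant-speed parametrized curves, one decomposes $\bm\eta=\sum_n\bm\eta|_{HV_n}$, with each summand still a $q$-plan, reducing to the case of a $q$-plan concentrated on a fixed $HV_n$. The base case $n=0$ follows by further splitting $\bm\eta=\bm\eta_H+\bm\eta_V$ into horizontal and vertical $q$-plans (the partition of $HV_0$ into horizontal and vertical curves is Borel) and applying Lemma \ref{lem:ug_along_hor_and_vert_plans} to each piece.

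For the inductive step, assume the inequality for $q$-plans concentrated on $HV_{n-1}$ and let $\bm\eta$ be a $q$-plan on $HV_n$. Define the Borel first-turning-time map $T_1\colon HV_n\to (0,1)$ and restriction maps
\[
R_L(\gamma)(t)\defeq \gamma(tT_1(\gamma)),\qquad R_R(\gamma)(t)\defeq \gamma(T_1(\gamma)+t(1-T_1(\gamma))),
\]
so that $R_L(\gamma)\in HV_0$ and $R_R(\gamma)\in HV_{n-1}$ for every $\gamma$. Set $\bm\eta_L\defeq R_{L\ast}\bm\eta$ and $\bm\eta_R\defeq R_{R\ast}\bm\eta$; a direct change-of-variables computation gives $\bm\eta_L^\#,\bm\eta_R^\#\le\bm\eta^\#$, so both are $q$-plans. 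Combining the triangle inequality $|f(\gamma_1)-f(\gamma_0)|\le |f(\gamma_{T_1(\gamma)})-f(\gamma_0)|+|f(\gamma_1)-f(\gamma_{T_1(\gamma)})|$ with the base case applied to $\bm\eta_L$, the induction hypothesis applied to $\bm\eta_R$, and the change-of-variables identities $\int_0^1 g(R_L(\gamma)_t)|R_L(\gamma)_t'|\ud t=\int_0^{T_1(\gamma)}g(\gamma_s)|\gamma_s'|\ud s$ and similarly for $R_R$, one integrates against $\bm\eta$ to obtain \eqref{eq:plan_upper_grad}.

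The main obstacle will be the bookkeeping of Borel measurability and the change-of-variables identities: verifying that $T_1,R_L,R_R$ are Borel on $HV_n$, that indeed $R_L(\gamma)\in HV_0$ and $R_R(\gamma)\in HV_{n-1}$ for every $\gamma\in HV_n$, and that $\bm\eta_L^\#,\bm\eta_R^\#\le\bm\eta^\#$. An alternative approach would be to disintegrate $\bm\eta$ over the first turning time and iteratively apply Lemma \ref{lma:restrict_fg}, but this is more delicate, since conditional plans need not be $q$-plans.
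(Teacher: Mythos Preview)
Your proof is correct and follows the same overall strategy as the paper: decompose by the number of turning times, then slice at turning points to reduce to $q$-plans on $HV_0$, where Lemma \ref{lem:ug_along_hor_and_vert_plans} applies. The tactical difference is that you split at the \emph{variable} first turning time $T_1(\gamma)$ and induct on $n$, whereas the paper first reparametrizes every curve in $HV_n$ by a piecewise-affine bijection so that all turning times land at the fixed points $i/n$, and then applies Lemma \ref{lma:restrict_fg} iteratively at those fixed times. Both devices achieve the same thing and both keep the restricted plans as $q$-plans (in the paper via Lemma \ref{lma:restrict_barycenter}; in your argument via $\bm\eta_L^\#+\bm\eta_R^\#=\bm\eta^\#$). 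Note that the ``alternative approach'' you mention at the end---disintegrating over $T_1$ and worrying about conditional plans---is not what the paper does; the paper's reparametrization trick sidesteps that issue entirely and is no more delicate than your inductive argument.
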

\begin{proof}
 Let $\bm\eta$ be a $q$-plan concentrated on $HV([0,1];X\times Y)$. 
 By disintegrating $\bm\eta$ with respect to the map
\[
tn\colon HV([0,1];X\times Y)\to \N,\quad \gamma\mapsto \#\{\textrm{number of turning points of }\gamma\}
\]
we obtain a representation
\begin{align*}
\bm\eta=\lambda_0\bm\eta_0+\lambda_1\bm\eta_1+\lambda_2\bm\eta_2+\cdots,
\end{align*}
where $(\lambda_n)$ is a non-negative summable sequence and, for each $n\in\N$ for which $\lambda_n>0$, $\bm\eta_n$ is a $q$-plan concentrated on $HV_n([0,1];X\times Y)$. 
Thus, we may assume that $\bm\eta$ is concentrated on $HV_n([0,1];X\times Y)$ for some $n \in \N$.

Since both sides of \eqref{eq:plan_upper_grad} are invariant under replacing $\bm\eta$ by $l_\ast\bm\eta$ we may moreover assume that $\bm\eta$-a.e. $\gamma$ is constant speed parametrized. For each such $\gamma$, let $\bm t(\gamma)\in (0,1)^n$ denote the turning times of $\gamma$, (cf. Definition \ref{def:hvn_curve}). Further, let $r_\gamma\colon [0,1]\to [0,1]$ be the piecewise affine bijection for which $r_\gamma(\bm t(\gamma)_i)=i/n$ for each $i=1,\ldots,n$, and set $r(\gamma)\defeq \gamma\circ r_\gamma$. The plan $\bm\eta'\defeq r_\ast\bm\eta$ is equivalent to $\bm\eta$ and for each $i=1,\ldots,n$ the restriction $\bm\eta_i=\bm\eta'|_{[(i-1)/n,i/n]}$ is concentrated on $HV_0([0,1];X\times Y)$. Thus \eqref{eq:plan_upper_grad} holds for $\bm\eta_i$ for each $i=1,\ldots,n$. By (an iterated use of) Lemma \ref{lma:restrict_fg} it follows that \eqref{lma:restrict_fg} holds for $\bm\eta'$, and thus for $\bm\eta$. This completes the proof.
\end{proof}

Using arguments from \cite{amb13} and \cite{teriseb}, Proposition \ref{prop:q-planHV} yields the following corollary.
\begin{cor}\label{cor:representativeJ1p}
For each $f \in J^{1,p}(X \times Y)$ there exists a $\tilde f \in J^{1,p}(X \times Y)$ so that $f = \tilde f$ a.e. and
 $g(x,y)=\|(|Df_x|_p(y),|Df^y|_p(x))\|'$ is an upper gradient of $\tilde f$ on $p$-a.e. hv-curve.
\end{cor}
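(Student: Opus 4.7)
The plan is to convert the plan-integral upper gradient inequality from Proposition \ref{prop:q-planHV} into a pointwise upper gradient inequality along $p$-almost every hv-curve, after modifying $f$ on a $\mu\times\nu$-null set.

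First I would check that $g\in L^p(\mu\times\nu)$: since $g(x,y)\le C(|Df^y|_p(x)+|Df_x|_p(y))$ for some constant $C$ depending on the norm $\|\cdot\|$, this follows from hypothesis (a3) in the definition of $J^{1,p}(X,Y)$. By Proposition \ref{prop:q-planHV}, for every $q$-plan $\bm\eta$ concentrated on $HV([0,1];X\times Y)$ we have
\[
\int |f(\gamma_1)-f(\gamma_0)|\,\ud\bm\eta \le \int\int_0^1 g(\gamma_t)|\gamma_t'|\,\ud t\,\ud\bm\eta,
\]
and the analogous bound with $[0,1]$ replaced by $[s,t]\subset[0,1]$ follows by restricting $\bm\eta$ as in Definition \ref{def:restr_of_plan} (noting that restrictions of hv-plans remain hv-plans).

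Next, following the standard strategy (cf.\ \cite[Theorem 7.4 and Section 8]{amb13} and the corresponding arguments in \cite{teriseb}), I would fix a suitable Borel representative $\tilde f$ of $f$ and show that the exceptional family
\[
\Gamma_{\mathrm{bad}} := \Big\{ \gamma \in HV : \exists\, [s,t]\subset [0,1] \text{ with } |\tilde f(\gamma_t) - \tilde f(\gamma_s)| > \int_s^t g(\gamma_r)|\gamma_r'|\,\ud r \Big\}
\]
has zero $p$-modulus within $HV$. If this were to fail, the standard duality between $p$-modulus and $q$-plans would produce a $q$-plan on hv-curves concentrated on (a positive-modulus subfamily of) $\Gamma_{\mathrm{bad}}$, contradicting the plan-integral inequality above.

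The main technical obstacle is establishing the modulus--plan duality restricted to the hv-family, which is not closed under arbitrary reparametrization or truncation. This can be handled by decomposing $HV=\bigcup_n HV_n$ (see Definition \ref{def:hvn_curve}) and applying the duality within each $HV_n$ separately, using that each $HV_n$ is Borel, closed under constant-speed reparametrization, and that restrictions of hv-curves between consecutive turning times are horizontal or vertical curves, for which the duality is standard. Summing over $n$ yields the desired conclusion, and $\tilde f\in J^{1,p}(X,Y)$ is immediate from $\tilde f=f$ $\mu\times\nu$-almost everywhere.
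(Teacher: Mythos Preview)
Your proposal has a genuine gap: the phrase ``fix a suitable Borel representative $\tilde f$'' hides the entire content of the corollary. For an \emph{arbitrary} Borel representative $\tilde f$ of $f$, the family $\Gamma_{\mathrm{bad}}$ need not have zero $p$-modulus, and your proposed contradiction does not rule this out. Indeed, the modulus--plan duality you invoke would furnish a $q$-plan $\bm\eta$ concentrated on a subfamily of $\Gamma_{\mathrm{bad}}$ with a fixed bad interval $[s,t]$, but Proposition~\ref{prop:q-planHV} (after restriction) controls $\int|f(\gamma_t)-f(\gamma_s)|\,\ud\bm\eta$, not $\int|\tilde f(\gamma_t)-\tilde f(\gamma_s)|\,\ud\bm\eta$. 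These quantities agree only if $e_{s\ast}\bm\eta,\,e_{t\ast}\bm\eta\ll\mu\times\nu$, which holds for $q$-\emph{test} plans but not for general $q$-plans; and converting $\bm\eta$ to a test plan via Proposition~\ref{prop:plans_to_testplans} destroys the concentration on $\Gamma_{\mathrm{bad}}$ (the reparametrizations there do not preserve a prescribed Borel family of curves).

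The paper proceeds in the opposite order. First, arguing as in \cite[Lemma~3.3]{teriseb}, one uses Proposition~\ref{prop:q-planHV} to show that for $\Mod_p$-a.e.\ hv-curve $\gamma$ the composition $f\circ\gamma$ (as an $L^1$ function on $[0,1]$) admits an absolutely continuous representative $\tilde f_\gamma$ for which $g$ is an upper gradient along $\gamma$. Only then, following \cite[Theorem~10.3]{amb13}, does one \emph{construct} the global representative $\tilde f$ by gluing the curve-wise representatives $\tilde f_\gamma$: one finds $h\in L^p(\mu\times\nu)$ such that $\tilde f_\gamma$ exists whenever $\int_0^1 h\circ\gamma<\infty$, defines $\tilde f(z)=\tilde f_\gamma(t)$ whenever $z=\gamma_t$ for some such $\gamma$ and some $t\in(0,1)$, and checks consistency by concatenating curves. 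The specification of $\tilde f$ is thus the output of the argument, not an input to it.
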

\begin{proof}
Arguing as in the proof of \cite[Lemma 3.3]{teriseb}, Proposition \ref{prop:q-planHV} implies that for $\Mod_p$-a.e. $HV$-curve $\gamma$ there exists a representative $\tilde f_\gamma$ of $f\circ\gamma$ for which the function $g$ is an upper gradient of $\tilde f_\gamma$ along $\gamma$.
Now repeating the argument in the proof of \cite[Theorem 10.3]{amb13} we obtain the required representative $\tilde f$. 
\end{proof}

\begin{proof}[Proof of Theorem \ref{thm:representativeJ1p}]
Let $f\in J^{1,p}(X,Y)$. The representative $\tilde f$ of $f$ given by Corollary \ref{cor:representativeJ1p} satisfies (1) and (2), since $g^y$ and $g_x$ are $p$-weak upper gradients of $f^y$ and $f_x$, respectively, for $\mu_X$-a.e. $x\in X$ and $\mu_Y$-a.e. $y\in Y$. If $h$ is a $p$-weak upper gradient of $\tilde f$ along HV-curves, then for $\mu$-a.e. $(x,y)$ we have
\begin{align*}
|(\tilde f^y\circ\alpha)_t'|\le h(\alpha_t,y)|\alpha_t'|\|(1,0)\|,\quad |(\tilde f_x\circ\beta)_t'|\le h(x,\beta_t)|\beta_t'|\|(0,1)\|\ \text{a.e. } t
\end{align*}
for $p$-a.e. curves $\alpha$ and $\beta$ in $X$ and $Y$, respectively. Consequently $|Df^y|_p(x)\le c_1h(x,y)$ and $|Df_x|_p(y)\le c_2h(x,y)$ and thus
\begin{align*}
g(x,y)=\sup\{ a|Df^y|_p(x)+b|Df_x|_p(y):\ \|(a,b)\|=1,\ a,b\ge 0 \}\le ch(x,y)\quad \mu-\text{a.e. }(x,y)\in X\times Y
\end{align*}
proving (4).
\end{proof}

\section{Tensorization when one factor is a PI-space}
In this section we prove Theorem \ref{thm:Onefactor}. Our proof uses a characterization of the tensorization property in terms of (weak) approximation properties. Specifically, we use the doubling property and Poincar\'e inequality to construct Lipschitz approximants of a Beppo--Levi function satisfying (3) in Proposition \ref{prop:equivalentconditions} below.


\begin{prop}\label{prop:equivalentconditions}Assume $p \in (1,\infty)$. Suppose that $X$ and $Y$ admit $p$-weak differentiable structures. If any of the following (equivalent) conditions is satisfied, then $W^{1,p}(X\times Y)=J^{1,p}(X,Y)$.

\begin{enumerate}
\item \textbf{Density:} $W^{1,p}(X\times Y)$ is dense in $J^{1,p}$ in norm.
\item \textbf{(Weak) Approximation by Sobolev functions:} For every bounded $f \in J^{1,p}(X,Y)$ with bounded support, there exists a constant $C>0$ and a sequence of functions $f_i \in N^{1,p}(X\times Y)$ with $\|f_i\|_{N^{1,p}} \leq C$ so that $f_i$ converges to $f$ in $L^p(X\times Y)$.
\item \textbf{(Weak) Approximation by Lipschitz functions:} For every bounded $f \in J^{1,p}(X,Y)$ with bounded support, there is a constant $C>0$ and a sequence of Lipschitz functions $f_i \in J^{1,p}(X,Y)$ with bounded support, so that $\|f_i\|_{W^{1,p}} \leq C$, and so that $f_i$ converges to $f$ in $L^{p}(X\times Y)$.
\end{enumerate}
\end{prop}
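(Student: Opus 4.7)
The cleanest way in is to observe that $W^{1,p}(X\times Y)$ is, by Theorem \ref{thm:tensor_sob}, isometrically embedded into $J^{1,p}(X,Y)$, and being the isometric image of a Banach space it is automatically \emph{closed} in $J^{1,p}$. Consequently condition (1) (density) immediately forces $W^{1,p}(X\times Y) = J^{1,p}(X,Y)$. The remaining task is therefore to establish the equivalences (1) $\Leftrightarrow$ (2) $\Leftrightarrow$ (3).

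The implications (1) $\Rightarrow$ (2) and (3) $\Rightarrow$ (2) are immediate: norm-convergent sequences are bounded and $L^p$-convergent, the $J^{1,p}$- and $N^{1,p}$-norms coincide on $N^{1,p}(X\times Y)$ by Theorem \ref{thm:tensor_sob}, and Lipschitz functions with bounded support belong to $N^{1,p}$. For (2) $\Rightarrow$ (1) I would use a Mazur--Fuglede argument. First reduce to bounded $f$ of bounded support by truncating with $T_N(f)=\max(-N,\min(f,N))$ and multiplying by Lipschitz cutoffs, and verify that these approximations converge in $J^{1,p}$-norm by dominated convergence in \eqref{eq:j1p_norm} together with the chain rule for directional gradients. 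Then apply (2) to obtain $f_i \in N^{1,p}$ with $\|f_i\|_{N^{1,p}} \le C$ and $f_i \to f$ in $L^p$. Since $p>1$, a subsequence of the pairs $(f_i,|Df_i|_p)$ converges weakly in $L^p\oplus L^p$; Mazur's lemma extracts convex combinations converging in norm to some $(f,g)$. Fuglede's lemma, used exactly as in the proof of Lemma \ref{lem:limit_has_diff}, identifies $g$ as a $p$-weak upper gradient of $f$, so $f \in N^{1,p}(X\times Y)$ and the convex combinations realize the required $N^{1,p}$-norm (hence $J^{1,p}$-norm) approximation.

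The implication (2) $\Rightarrow$ (3) is the most delicate and is where the Lipschitz regularity has to be created. Given an $N^{1,p}$-approximating sequence $f_i$, one replaces each $f_i$ by a Lipschitz function with controlled $N^{1,p}$-norm. A natural route is a Lusin--McShane construction: on sublevel sets of a maximal-type functional of $|Df_i|_p$ the function $f_i$ is Lipschitz with quantitative constant and admits a McShane extension with comparable Lipschitz constant. Since only weak approximation is required (namely $L^p$-convergence with uniform bound on the Sobolev norm), discarding a set of small measure is permissible, and a diagonalisation produces the sequence required by (3).

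The step I expect to be the main obstacle is precisely (2) $\Rightarrow$ (3): in a $p$-weak differentiable setting we do not have PI-type maximal function inequalities, so the Lusin--McShane argument cannot be executed directly in its textbook form. One remedy is to work chart-by-chart, using the $p$-weak differentiable structure on $X\times Y$ afforded by Corollary \ref{cor:prod_has_diff_struct}: on each chart the differential is controlled, and a Lusin-type selection on closed sets where $|Df_i|_p$ is essentially bounded combined with McShane extension yields Lipschitz approximants converging in $L^p$ with uniformly bounded $N^{1,p}$-norm. The boundedness of $f_i$ and $\spt f_i$ (inherited from the truncation step above) ensures that the extensions can be arranged to have bounded support as required.
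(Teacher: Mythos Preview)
Your overall architecture matches the paper's: both begin from the fact that $W^{1,p}(X\times Y)\hookrightarrow J^{1,p}(X,Y)$ is isometric (hence closed), reducing everything to (1), and both close the loop via a Mazur-type argument. Your (2) $\Rightarrow$ (1) is essentially the paper's (3) $\Rightarrow$ (1), with one difference: the paper invokes reflexivity of $W^{1,p}(X\times Y)$ directly (this holds because $X\times Y$ admits a $p$-weak differentiable structure, \cite[Corollary 6.7]{teriseb}), obtains weak then norm convergence in $W^{1,p}$ by Mazur, and reads off $f\in W^{1,p}$. Your route through weak compactness in $L^p\oplus L^p$ plus Fuglede also works, though your last sentence overstates the conclusion slightly: you do not get $N^{1,p}$-norm convergence of the convex combinations to $f$, only that $f\in N^{1,p}$. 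That suffices, since bounded compactly supported $J^{1,p}$ functions are then in $W^{1,p}$, and those are dense in $J^{1,p}$.

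The genuine gap is (2) $\Rightarrow$ (3). The paper dispatches this in one line by citing the energy density of Lipschitz functions in Sobolev spaces on arbitrary metric measure spaces \cite{ambgigsav,seb2020}: for every $h\in N^{1,p}(X\times Y)$ there exist boundedly supported Lipschitz $g_j$ with $g_j\to h$ in $L^p$ and $\|g_j\|_{N^{1,p}}\to \|h\|_{N^{1,p}}$. Applying this to each $f_i$ from (2) and diagonalising gives (3) immediately. Your proposed Lusin--McShane approach is unnecessary and, as you yourself note, does not go through in this generality: there is no maximal function inequality available, and the chart-by-chart workaround you sketch is not a real argument (Lipschitz regularity on a chart is a statement about the target coordinates $\varphi(x)$, not about the metric on $X\times Y$, so a McShane extension off a sublevel set does not produce a Lipschitz function on the space). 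Replace this step with the energy-density citation and your proof is complete.
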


\begin{proof}

Since $W^{1,p}(X\times Y) \subset J^{1,p}(X,Y)$ is an  isometric closed subset, then equality $W^{1,p}(X\times Y)=J^{1,p}(X,Y)$ is equivalent to $(1)$.

Clearly $(1) \Longrightarrow (2)$. Further, $(2) \Longleftrightarrow (3)$ by density in Energy of Lipschitz functions, for $p \in (1,\infty)$ see \cite{ambgigsav} (alternative proof in \cite{seb2020}).

Next, we show that $(3)$ implies $(1)$. Suppose $f \in J^{1,p}(X, Y)$.

By Corollary \ref{cor:prod_has_diff_struct}, $X \times Y$ admits a $p$-weak differential structure. By \cite[Corollary 6.7]{teriseb}, the space $W^{1,p}(X \times Y)$ is reflexive. Thus bounded functions with bounded support are dense in $W^{1,p}(X\times Y)$.

Let $f\in J^{1,p}(X,Y)$ be bounded and with bounded support. Let $(f_i)$ be a sequence of Lipschitz functions satisfying the conclusion in (3). By reflexivity and Mazur's Lemma, a convex combination $\tilde f_i$ of a suitable subsequence of  $(f_i)$ converges in the norm of $W^{1,p}(X\times Y)$ and in $L^{p}(X\times Y)$. Since the inclusion $W^{1,p}(X\times Y) \hookrightarrow L^p(X\times Y)$ is injective, and since the sequence also converges in $L^p(X\times Y)$ to $f$, then the limit in $W^{1,p}(X\times Y)$ equals $f$. Therefore, $f\in W^{1,p}(X\times Y)$. The density of $W^{1,p}(X\times Y)$ in $J^{1,p}(X,Y)$ follows.
\end{proof}



To construct Lipschitz approximants in the proof of Theorem \ref{thm:Onefactor} we use a so-called discrete convolution in the $X$-direction. First, define a Lipschitz partition on unity. For $n \in \N$ fix an $2^{-n}$-net $N_n \subset X$. That is, fix a set $N_n \subset X$ so that for each $x \in X$ there is a $a \in N_n$
 with $d(a,x) \leq 2^{-n}$, and for each $a,b \in N_n$ we have $d(a,b) > 2^{-n}$.  Let $\{\psi^n_a : a \in N_n\}$ be a Lipschitz partition of unity subordinate to the cover $\{B(a,2^{1-n})\}$ so that ${\rm supp}(\psi^n_a) \subset B(a,2^{2-n})$. The functions $\psi^n_a$ can be chosen to be $C(D)2^{n}$--Lipschitz with a constant $C(D)$ depending on doubling of $X$. We will fix this partition of unity and the nets $N_n$ in the proofs below.

For a function $f\in W^{1,p}(X)$ define the approximation 
 
 \begin{equation}
 T_n f(x) = \sum_{a \in N_n} \psi^n_a(x) \vint_{B(a,2^{-n})} fd\mu.
 \end{equation}
 
We have the following lemma. 

\begin{lemma}\label{lem:discrete-convolution}
If $X$ is $p$-PI, then $T_n:W^{1,p}(X)\to W^{1,p}(X)$ is bounded, with norm bounded independent of $n$, and $T_n f \to f$ in $L^p(X)$ for every $f\in W^{1,p}(X)$.
\end{lemma}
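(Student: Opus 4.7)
The plan is to prove three things: a uniform $L^p$-bound on $T_n$, a uniform $L^p$-bound on a weak upper gradient of $T_n f$, and $L^p$-convergence $T_n f\to f$. For the first step I would establish the pointwise maximal-function bound $|T_n f(x)|\le C M f(x)$, where $M$ is the Hardy--Littlewood maximal operator on $X$. Indeed, when $\psi^n_a(x)\ne 0$ we have $x\in B(a,2^{2-n})$, so $B(a,2^{-n})\subset B(x,5\cdot 2^{-n})$; by doubling, $\mu(B(a,2^{-n}))$ and $\mu(B(x,5\cdot 2^{-n}))$ are comparable, giving $\left| \vint_{B(a,2^{-n})} f\, d\mu \right|\le C M f(x)$. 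Summing using $\sum_a\psi^n_a\equiv 1$ yields the pointwise bound, and the $L^p$-boundedness of $M$ (for $p>1$) produces $\|T_n f\|_{L^p}\le C\|f\|_{L^p}$ with $C$ independent of $n$.

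Next I would produce an upper gradient for $T_n f$. Since $X$ is complete, doubling, and $(1,p)$-PI, the Keith--Zhong self-improvement theorem yields some $q\in [1,p)$ for which $X$ supports a $(1,q)$-Poincar\'e inequality. For $x,y\in X$ with $d(x,y)\le 2^{-n}$, using $\sum_a \psi^n_a\equiv 1$ I would rewrite
\[
T_n f(x)-T_n f(y)=\sum_{a}(\psi^n_a(x)-\psi^n_a(y))\left( \vint_{B(a,2^{-n})} f\, d\mu - \vint_{B^*} f\, d\mu \right),
\]
where $B^*$ is a ball of radius comparable to $2^{-n}$ centered at $x$ that contains every $B(a,2^{-n})$ with $\psi^n_a(x)\psi^n_a(y)\ne 0$. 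Combining the bound $|\psi^n_a(x)-\psi^n_a(y)|\le C 2^n d(x,y)$, the doubling estimate on the number of contributing indices, and a telescoping-ball argument applied to the $(1,q)$-PI to estimate each difference of averages by $C 2^{-n}M_q(|Df|_p)(x)$, with $M_q g\defeq (M(g^q))^{1/q}$, yields
\[
|T_n f(x)-T_n f(y)|\le C d(x,y)\,M_q(|Df|_p)(x).
\]
Since $q<p$, $M_q$ is bounded on $L^p$, so $C M_q(|Df|_p)$ is a $p$-integrable upper gradient of $T_n f$ with norm controlled by $\||Df|_p\|_{L^p}$, completing the boundedness $T_n\colon W^{1,p}(X)\to W^{1,p}(X)$ with $n$-independent operator norm.

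For the convergence $T_n f\to f$ in $L^p$, the uniform bound on $\|T_n\|_{L^p\to L^p}$ lets me reduce, by a standard density argument, to $f$ bounded, uniformly continuous, and with bounded support. For such $f$ the averages $\vint_{B(a,2^{-n})} f\, d\mu$ converge uniformly in $a\in N_n$ to $f(a)$, and combined with the uniform continuity of $f$ this gives $T_n f\to f$ uniformly on $X$. Because $T_n f$ and $f$ are supported in a common bounded set (the $2^{2-n}$-neighbourhood of $\spt f$ for $n$ sufficiently large), uniform convergence upgrades to $L^p$-convergence.

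The main technical obstacle is the upper gradient estimate in the second step: bounding $M(|Df|_p)$ on $L^p$ directly is impossible because $M$ is not bounded on $L^1$. Replacing $M$ by $M_q$ with some $q<p$ provided by Keith--Zhong self-improvement of the Poincar\'e inequality is the key trick that makes $\|T_n\|_{W^{1,p}\to W^{1,p}}$ controllable. The telescoping-ball construction giving the sharper difference-of-averages bound via the $(1,q)$-PI is the computational heart of the argument.
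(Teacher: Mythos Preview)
Your argument is correct (for $p>1$) but takes a heavier route than the paper. You invoke the Hardy--Littlewood maximal operator for the $L^p$-bound on $T_nf$, and then the Keith--Zhong self-improvement theorem to obtain a $(1,q)$-Poincar\'e inequality with $q<p$, precisely so that the pointwise gradient bound $\Lip[T_nf]\le C\,M_q(|Df|_p)$ lands in $L^p$. This works, and the pointwise upper-gradient estimate you obtain is a nice byproduct.

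The paper's proof sidesteps both maximal functions and Keith--Zhong entirely. For the $L^p$-boundedness and $L^p$-convergence it simply cites a standard discrete-convolution lemma. For the gradient, it uses the local oscillation bound
\[
\Lip[T_nf](x)\le C\,2^{n}\dashint_{B(a,C'2^{-n})}|f-f_{B(a,2^{-n})}|\,d\mu
\]
for $x$ near $a$, applies the $(1,p)$-Poincar\'e inequality \emph{directly} to get $\Lip[T_nf](x)^p\le C\,\dashint_{B(a,C''2^{-n})}|Df|_p^p\,d\mu$, and then integrates this over $X$ using the bounded overlap of the balls. The obstacle you identify---that $M(|Df|_p)$ need not be in $L^p$---never arises, because the paper bounds $\int\Lip[T_nf]^p$ by a \emph{sum of local integrals} rather than by a global maximal function. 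This is both more elementary (no deep self-improvement result) and slightly more general (it does not require $p>1$). Your approach trades that elementarity for a stronger pointwise conclusion.
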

\begin{proof} That $T_n:L^{p}(X)\to L^p(X)$ is bounded, and that $T_n f \to f$ for all $f \in L^p(X)$ follows from \cite[Lemma 5.2]{HKTapprox}. Further, \cite[Lemma 5.2]{HKTapprox} also implies that there is a constant $C>0$ independent of $n$ so that  $T_n f$ is locally Lipschitz with
\[
\Lip[T_nf](x) \leq C 2^{n}\dashint_{B(a,5\lambda 2^{2-n})} |f-f_{B(a,2^{-n})}| d\mu
\]
whenever $x\in B(a,5\lambda 2^{2-n})$. By the $p$-Poincar\'e inequality, we have
\[
|DT_nf|_p(x)^p \leq \Lip[T_nf](x)^p \leq C^p c_{PI}^p \dashint_{B(a,5\lambda 2^{2-n})} |Df|_p^p d\mu
\]
for all $x\in B(a,5\lambda 2^{2-n})$. The balls $B(a,5\lambda 2^{2-n})$ have  bounded overlap by the doubling condition and thus we get

\[
\int |DT_nf|_p(x)^p d\mu \leq \sum_{a \in N_n} C^pc_{PI}^p \int_{B(a,5\lambda 2^{2-n})} |Df|_p^p d\mu \leq C' \int_{X} |Df|_p^p d\mu,
\]
for a constant $C'=C(C,D,\lambda, c_{PI})$, where $D$ is the doubling constant of $\mu$.
\end{proof}

\begin{remark}
Indeed, our proof of Theorem \ref{thm:Onefactor} will use the Poincar\'e inequality only through the previous Lemma. One may conjecture that linear approximating operators $T_n$ exist more generally. The crucial properties that we need are that the expression is given by a partition of unity, averages of the function, and that it is a bounded linear operator. PI-spaces are the only context where such a discrete convolution operators are known to exist. One plausible approach to tensorization of Sobolev spaces would involve constructing such convolutions more generally.
\end{remark}

\begin{proof}[Proof of Theorem \ref{thm:Onefactor}]
We will verify $(3)$ from Proposition \ref{prop:equivalentconditions}. Let $f \in J^{1,p}(X,Y)$ be bounded and with bounded support. We assume that $f$ is the good representative given by Theorem \ref{thm:representativeJ1p}. 

For each $a \in N_n$ consider the function $f^a \colon  y \mapsto \dashint_{B(a,2^{-n})} f(z,y) d\mu(z).$ Let $g(z,y)=|Df_z|_p(y)$ be the $p$-weak upper gradient of $f$ in the $Y$-direction and define $\tilde{g}^a(y) = \dashint_{B(a,2^{-n})}|D f_z|_p(z,y)d\mu(z).$ By Minkowski's inequality, we see that $\tilde{g}^a \in L^p$ with
\[
\|\tilde{g}^a\|_{L^p(Y)} \leq \dashint_{B(a,2^{-n})} \|g(z, \cdot)\|_{L^p(Y)} d\mu(z).
\]

 Let $\bm \eta$ be any $q$-test plan in $Y$.

\begin{align*}
\int |f^a(\gamma_1)-f^a(\gamma_0)|d\bm \eta(\gamma) &\leq \dashint_{B(a,2^{-n})} \int |f(z,\gamma_1)-f(z,\gamma_0)|d\bm \eta(\gamma) d\mu(z)\\
&\leq \dashint_{B(a,2^{-n})} \int\int_0^1 |Df_z|_p(z,\gamma_t) |\gamma_t'| dt d\bm \eta(\gamma) d\mu(z)\\
&\leq  \int\int_0^1 \dashint_{B(a,2^{-n})}|Df_z|_p(z,\gamma_t)d\mu(z) |\gamma_t'| dt d\bm \eta(\gamma)\\
&\leq \int\int_\gamma \tilde{g}^a ds d\bm \eta(\gamma).
\end{align*}
Since $\bm \eta$ is arbitrary it follows that $f^a\in W^{1,p}(Y)$.

Define 
\[
f_n(x,y) =  T_n(f^y)(x) = \sum_{a \in N_n} \psi^n_a(x) f^a(y).
\]
 
Since $f$ has bounded support, the sum here is in fact finite. It follows that $f_n\in N^{1,p}(X\times Y)$.  
The doubling condition and $p$-Poincar\'e inequality on $X$ easily implies that $f_n \to f$ in $L^p(X \times Y)$. Indeed, 
\begin{align*}
	f(x,y)-f_n(x,y)=\sum_{a\in N_n}\psi^n_a(x)[f(x,y)-f^a(y)]=\sum_{a\in N_n}\psi^n_a(x)[f^y(x)-(f^y)_{B(a,2^{-n})}].
\end{align*}
whence 
\begin{align*}
	\left(\int_X|f_n(x,y)-f(x,y)|^p\ud\mu(x)\right)^{1/p}&\le \sum_{a\in N_n}\left(\int_{B(a,2^{2-n})}|f^y-(f^y)_{B(a,2^-n)}|^p\ud\mu\right)^{1/p}\\
	&\le C\sum_{a\in N_n}2^{-n}\left(\int_{B(a,2^{2-n})}|Df^y|^p\ud\mu\right)^{1/p}.
\end{align*}
Integrating the $p$-th power of this estimate over $Y$ using that the balls $B(a,2^{2-n})$ have bounded overlap we obtain $\|f_n-f\|_{L^p(X\times Y)}\le C2^{-n}\|f\|_{J^{1,p}}$, implying $\|f_n-f\|\to 0$ as claimed.

Now, we show that $\sup_{n\in\N}\||Df_n|_p\|_{L^p}<\infty$. We have $|Df_n|_p\leq |\ud_X f_n| + |\ud_Y f_n|$. First consider $|\ud_Y f_n|$. Since $x$ is constant we obtain

\[
\ud_Y f_n(x,y) = \sum_{a \in N_n} \psi^n_a(x) \ud_Y f^a(y).
\]

By the triangle inequality, and since the sum has at most $C(D)$-nonzero terms for any given $x$, we get
\begin{align*}
\int_Y |\ud_Y f_n|^p(x,y)d\nu(y) \lesssim_{D} & \sum_{a \in N_n} |\psi^n_a(x)|^p \int_Y |\ud_Y f^a(y)|^p d\nu(y) \\
\lesssim & \sum_{a \in N_n} |\psi^n_a(x)|^p  \dashint_{B(a,2^{-n})} \|g(z, \cdot)\|_{L^p(Y)} d\mu(z). 
\end{align*}
 
 Integrating over $X$, noting that $\psi^n_a$ has support in $B(a,2^{2-n})$, and using the doubling condition yields that $\||\ud_Y f_n|\|_{L^p(X\times Y)} \lesssim |g|_{L^p(X\times Y)}$. 
 
 Next, consider the $X$-derivative. Let $b\in N_n$. We may write
 \[
 \ud_Xf_n(x,y)=\sum_{a\in N_n}\ud_X\psi^n_a(x)[f^a(y)-f^b(y)]
 \]
using the linearity of the differential and the fact that $\psi^a_n$ is a partition of unity. The doubling condition implies the existence of a constant $C_D$ such that, for each $x\in B(b,2^{2-n})$, there are at most $C_D$ elements $a\in N_n$ with $B(a,2^{2-n})\cap B(b,2^{2-n})\ne \varnothing$. For these $a\in N_n$ the $p$-Poincar\'e inequality implies the estimate
\begin{align*}
	|f^a(y)-f^b(y)|=\left|\dashint_{B(a,2^{-n})}f(\cdot,y)\ud\mu-\dashint_{B(b,2^{-n})}f(\cdot,y)\ud\mu\right| \le C2^{-n}\left(\dashint_{B(b,\lambda 2^{8-n})}|Df^y|^p\ud\mu\right)^{1/p}.
\end{align*}
This implies that
\begin{align*}
\|\ud_Xf_n(x,\cdot)\|_{L^p(Y)}&\le \sum_{a\in N_n}|\ud_X\psi_a^n(x)|\|f^a(y)-f^b(y)\|_{L^p(Y)} \\
&\le C\sum_{a\in N_n}2^n\cdot C2^{-n}\left\|\left( \dashint_{B(b,\lambda 2^{8-n})}|Df^y|^p\ud\mu\right)^{1/p} \right\|_{L^p(Y)}\\
&\le C'\left( \dashint_{B(b,\lambda 2^{8-n})}\||Df^y|(z)\|_{L^p(Y)}^p\ud\mu(z)\right)^{1/p}
\end{align*}
for each $x\in B(b,2^{2-n})$. The balls $B(b,\lambda 2^{8-n})$ have bounded overlap by the doubling condition. Thus

\begin{align*}
	\int_{X\times Y}|\ud_Xf_n|^p\ud(\mu\times\nu)&=\int_X\||\ud_Xf_n(x,\cdot)\|_{L^p(Y)}^p\ud\mu(x)=\sum_{b\in N_n}\int_{X}\psi^n_b(x)\||\ud_Xf_n(x,\cdot)\|_{L^p(Y)}^p\ud\mu(x)\\
	&\le C'\sum_{b\in N_n}\int_{B(b,2^{2-n})}\dashint_{B(b,\lambda 2^{8-n})}\||Df^y|(z)\|_{L^p(Y)}\ud\mu(z)\ud\mu(x)\\
	&\le C'\sum_{b\in N_n}\int_{B(b,\lambda 2^{8-n})}\||Df^y|(z)\|_{L^p(Y)}^p\ud\mu(z)\le C''\int_X\||Df^y|(z)\|_{L^p(Y)}^p\ud\mu(z).
\end{align*}
Thus $\|\ud_Xf_n\|_{L^p(X\times Y)}$ and $\|\ud_Yf_n\|_{L^p(X\times Y)}$ are bounded independently of $n$, completing the proof.
 \end{proof}

\appendix

\section{}
\subsection{Elementary properties of disintegration}
We record some elementary properties of disintegrations of plans. In the following statement, the space $ACB([0,1])$ consists of absolutely continuous bijections $\sigma\colon [0,1]\to [0,1]$ with absolutely continuous inverse, and $\{\bm\pi_x\}$ is the disintegration of the measure $\bm\pi$ given by $\ud\bm\pi:=|\gamma_t'|\ud t\ud \bm\eta$ with respect to $e$. The following properties are easy to verify from the definition by direct calculation using the uniqueness of disintegration. Thus we omit the proofs.

\begin{lemma}\label{lem:basic_disint}
	Let $F\colon AC([0,1];X)\to [0,\infty]$ be bounded and Borel and denote
	\[
	f(x):=\int_{e\inv(x)}F\ud\bm\pi_x
	\]
	for $\bm\eta^\#$-a.e. $x\in X$. Let $\sigma\colon C([0,1];X)\to ACB([0,1])$ be Borel and denote 
    \[
    H\colon C([0,1];X)\to C([0,1];X),\quad \gamma\mapsto \gamma\circ \sigma_\gamma.
	\]
	\begin{itemize}
		\item [(a)] The plan $\bm\eta_F:=F\bm\eta$ and the disintegration $\{(\bm\pi_F)_x\}$ of $\ud\bm\pi_F:=|\gamma_t'|\ud t\ud\bm\eta_F$ satisfy
		\[
		\ud\bm\eta_F^\#(x)=f(x)\ud \bm\eta^\#(x),\quad (\bm\pi_F)_x=\frac{1}{f(x)} F\bm\pi_x\quad \bm\eta^\#-a.e.\ x\in \{f>0\}.
		\]
		\item[(b)] The plan  $\bm\eta_H:=H_\ast\bm\eta$ and the disintegration $\{(\bm\pi_H)_x\}$ of $\ud\bm\pi_H:=|\gamma_t'|\ud t\ud\bm\eta_H$ satisfy
		\[
		\bm\eta_H^\#=\bm\eta^\#,\quad (\bm\pi_H)_x=(H_{\sigma\inv})_\ast\bm\pi_x \quad \bm\eta^\#-a.e.\ x\in X.
		\]
		
		Here $H_{\sigma\inv}(\gamma,t):=(H\gamma,\sigma_\gamma\inv(t))$.
	\end{itemize}
\end{lemma}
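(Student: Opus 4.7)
The plan is to reduce both claims to applications of the uniqueness clause in Theorem \ref{thm:disintegration}, combined with elementary manipulations of the integrals defining $\bm\pi$, $\bm\pi_F$, and $\bm\pi_H$. The general strategy for each part is to show that the proposed family of fiber measures is indeed a disintegration of the relevant plan with respect to $e$; uniqueness of the disintegration then concludes the argument.

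For part (a), I would first interpret $F\colon AC([0,1];X)\to[0,\infty]$ as a bounded Borel function on $[0,1]\times AC([0,1];X)$ independent of the first coordinate, so that $\bm\pi_F=F\bm\pi$. Computing
\begin{align*}
\bm\eta_F^\#(E)=\int (\chi_E\circ e)F\ud\bm\pi=\int_X\chi_E(x)\left(\int_{e\inv(x)}F\ud\bm\pi_x\right)\ud\bm\eta^\#(x)=\int_X\chi_Ef\ud\bm\eta^\#
\end{align*}
via the disintegration of $\bm\pi$ yields $\bm\eta_F^\#=f\bm\eta^\#$. For any bounded Borel $G$ on $[0,1]\times AC([0,1];X)$, the same disintegration gives
\begin{align*}
\int G\ud\bm\pi_F=\int GF\ud\bm\pi=\int_X\left(\int_{e\inv(x)}G\frac{F}{f(x)}\ud\bm\pi_x\right)f(x)\ud\bm\eta^\#(x),
\end{align*}
on $\{f>0\}$; recognizing the outer measure as $\bm\eta_F^\#$ and appealing to uniqueness of the disintegration of $\bm\pi_F$ identifies $(\bm\pi_F)_x=F\bm\pi_x/f(x)$ for $\bm\eta_F^\#$-a.e. $x\in\{f>0\}$. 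The behaviour outside $\{f>0\}$ is irrelevant since $\bm\eta_F^\#$ vanishes there.

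For part (b), the key computation is the change of variables $s=\sigma_\gamma(t)$, valid for $\bm\eta$-a.e. $\gamma$ since $\sigma_\gamma\in ACB([0,1])$ and hence has the Lusin $N$-property. Applying the chain rule to $H\gamma=\gamma\circ\sigma_\gamma$ gives $|(H\gamma)_t'|=|\gamma_{\sigma_\gamma(t)}'|\sigma_\gamma'(t)$ a.e.\ $t$, so the substitution produces
\begin{align*}
\int_0^1\chi_E((H\gamma)_t)|(H\gamma)_t'|\ud t=\int_0^1\chi_E(\gamma_s)|\gamma_s'|\ud s,
\end{align*}
and integrating against $\bm\eta$ yields $\bm\eta_H^\#=\bm\eta^\#$. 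For the fiber measures, the same substitution applied to $\int G\ud\bm\pi_H$ for bounded Borel $G$ transforms the integral into $\int(G\circ H_{\sigma\inv})\ud\bm\pi$, and the disintegration of $\bm\pi$ then gives
\begin{align*}
\int G\ud\bm\pi_H=\int_X\left(\int G\ud(H_{\sigma\inv})_\ast\bm\pi_x\right)\ud\bm\eta_H^\#(x).
\end{align*}
Since $e\circ H_{\sigma\inv}(\gamma,s)=H\gamma(\sigma_\gamma\inv(s))=\gamma_s$, the pushforward $(H_{\sigma\inv})_\ast\bm\pi_x$ is concentrated on $e\inv(x)$, and uniqueness of disintegration delivers $(\bm\pi_H)_x=(H_{\sigma\inv})_\ast\bm\pi_x$ for $\bm\eta^\#$-a.e. $x$.

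The main (minor) obstacle will be the careful handling of the change of variables along $\sigma_\gamma$: one must invoke both the chain rule for absolutely continuous reparametrizations and the $N$-property of $\sigma_\gamma\in ACB([0,1])$ to justify the pointwise a.e.\ identity for $|(H\gamma)_t'|$ and the Lebesgue integral substitution formula used in transporting the disintegration along $H$. Once these technicalities are settled, the two claims follow essentially by bookkeeping.
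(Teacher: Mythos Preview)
Your proposal is correct and follows precisely the approach indicated by the paper: the paper explicitly states that these properties ``are easy to verify from the definition by direct calculation using the uniqueness of disintegration'' and omits the proofs entirely. Your argument carries out exactly this direct calculation---verifying that the proposed fiber families satisfy the defining property of the disintegration and invoking uniqueness from Theorem~\ref{thm:disintegration}---so there is nothing to add.
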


\begin{lemma}\label{lem:invariance}
	Let $F$ and $H$ be as in Lemma \ref{lem:basic_disint} and suppose that $F>0$ $\bm\eta$-a.e. Consider $\bm\eta_{F,H}:=FH_\ast\bm\eta$ and the disintegration $\{ (\bm\pi_{F,H})_x \}$ of $\ud\bm\pi_{F,H}:=|\gamma_t'|\ud t\ud \bm\eta_{F,H}$. Then $\bm\eta^\#\ll \bm{\eta}_{F,H}^\#\ll \bm\eta^\#$ and
	\begin{align}\label{eq:sup_norm_invariance}
	&\|G\|_{L^\infty((\bm\pi_{F,H})_x)}=\|G\|_{L^\infty(\bm\pi_x)}\quad \bm\eta^\#-a.e.\ x\in X
	\end{align}
	for every Borel function $G\colon C([0,1];X)\times[0,1]\to \R$ satisfying
	\begin{equation}\label{eq:param_invariance}
	G(\gamma\circ\sigma_\gamma,t)=G(\gamma,\sigma_\gamma(t))\quad \bm\pi-a.e.\ (\gamma,t).
	\end{equation}
\end{lemma}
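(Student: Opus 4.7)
The plan is to decompose $\bm\eta_{F,H}$ as a two-step operation and apply the disintegration formulae of Lemma \ref{lem:basic_disint} in sequence. Writing $\bm\eta_{F,H} = F\cdot H_\ast\bm\eta = F\bm\eta_H$, I would first invoke part (b) of Lemma \ref{lem:basic_disint} on $\bm\eta$ to obtain $\bm\eta_H^\# = \bm\eta^\#$ and $(\bm\pi_H)_x = (H_{\sigma\inv})_\ast\bm\pi_x$, and then apply part (a) to the plan $\bm\eta_H$ with weight $F$. This yields
\[
\bm\eta_{F,H}^\# = \tilde f\,\bm\eta^\#, \qquad (\bm\pi_{F,H})_x = \frac{1}{\tilde f(x)} F(\bm\pi_H)_x \quad \bm\eta^\#-\text{a.e. on }\{\tilde f>0\},
\]
where $\tilde f(x)\defeq\int F\ud(\bm\pi_H)_x$. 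Since $F>0$ $\bm\eta$-a.e., the function $\tilde f$ is strictly positive $\bm\eta^\#$-a.e., giving the mutual absolute continuity $\bm\eta^\#\ll\bm\eta_{F,H}^\#\ll\bm\eta^\#$.

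For the $L^\infty$ identity, the crucial observation will be that the invariance condition \eqref{eq:param_invariance} is precisely what is needed to force $G$ to be invariant under the map $H_{\sigma\inv}$. Explicitly, substituting $t\mapsto\sigma_\gamma\inv(t)$ in \eqref{eq:param_invariance} and using $\sigma_\gamma(\sigma_\gamma\inv(t))=t$ gives
\[
G(\gamma\circ\sigma_\gamma,\sigma_\gamma\inv(t)) = G(\gamma,\sigma_\gamma(\sigma_\gamma\inv(t))) = G(\gamma,t),
\]
so that $G\circ H_{\sigma\inv} = G$ $\bm\pi$-a.e. (the composition preserves a.e.-statements because $\sigma_\gamma\inv$ is an absolutely continuous bijection of $[0,1]$). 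Consequently, for $\bm\eta^\#$-a.e.\ $x$,
\[
\|G\|_{L^\infty((\bm\pi_H)_x)} = \|G\|_{L^\infty((H_{\sigma\inv})_\ast\bm\pi_x)} = \|G\circ H_{\sigma\inv}\|_{L^\infty(\bm\pi_x)} = \|G\|_{L^\infty(\bm\pi_x)}.
\]

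To finish, I observe that multiplying a measure by a strictly positive weight does not alter the essential supremum of any measurable function, so $\|G\|_{L^\infty((\bm\pi_{F,H})_x)} = \|G\|_{L^\infty((\bm\pi_H)_x)}$; chaining the two equalities gives \eqref{eq:sup_norm_invariance}. The main obstacle is largely bookkeeping: verifying the positivity of $\tilde f$ $\bm\eta^\#$-a.e.\ from the hypothesis on $F$, and ensuring the two-step disintegration formula composes correctly (which is handled by the uniqueness in Theorem \ref{thm:disintegration}). The analytic content of the lemma reduces to the pointwise identity $G\circ H_{\sigma\inv}=G$, which is an immediate consequence of \eqref{eq:param_invariance}.
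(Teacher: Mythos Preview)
Your proof is correct and follows essentially the same route as the paper's: both apply Lemma \ref{lem:basic_disint}(b) then (a) to obtain the disintegration $(\bm\pi_{F,H})_x=\frac{1}{\tilde f(x)}F(H_{\sigma\inv})_\ast\bm\pi_x$, observe that the positive weight $F$ does not affect $L^\infty$-norms, and then use \eqref{eq:param_invariance} to conclude $G\circ H_{\sigma\inv}=G$ $\bm\pi$-a.e. Your writeup is slightly more explicit about the substitution step yielding $G\circ H_{\sigma\inv}=G$, but the argument is the same.
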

\begin{proof}
	Lemma \ref{lem:basic_disint} implies that $\bm\eta^\#\ll \bm{\eta}_{F,H}^\#\ll \bm\eta^\#$ (since $f>0$ $\bm\eta^\#$-a.e.) and moreover
	\[
	(\bm\pi_{F,H})_x=\frac{1}{f(x)}F(H_{\sigma\inv})_\ast\bm\pi_x\quad \bm\eta^\#-a.e.\ x\in X.
	\]
	For any Borel function $G\colon C([0,1];X)\times[0,1]\to \R$ we have that $\|G\|_{L^\infty((\bm\pi_{F,H})_x)}=\|G\|_{L^\infty((H_{\sigma\inv})_\ast\bm\pi_x)}$ for $\bm\eta^\#$-a.e. $x\in X$, thus we may assume that $F\equiv 1$.
	
	Now suppose that $G$ satisfies \eqref{eq:param_invariance}. Then $G\circ H_{\sigma\inv}=H$ $\bm\pi$-a.e. which readily implies that
	\[
	\|G\|_{L^\infty((H_{\sigma\inv})_\ast\bm\pi_x)}=\|G\circ H_{\sigma\inv}\|_{L^\infty(\bm\pi_x)}=\|G\|_{L^\infty(\bm\pi_x)}\quad \bm\eta^\#-a.e.\ x\in X.
	\]
	This completes the proof.
\end{proof}

\subsection{Measurability of $|Df_x|(y)$ and $|Df^y|(x)$}

\begin{prop}\label{prop:borel_slice}
    Let $f\in J^{1,p}(X,Y)$. For $\mu$-a.e. $x\in X$ there exists a representative of $\ud f_x\in \Gamma_p(T^*Y)$ so that $(x,y)\mapsto \ud_yf_x$ is Borel measurable. Similarly for $\nu$-a.e. $y\in Y$ there exists a representative of $\ud f^y\in\Gamma_p(T^*X)$ so that $(x,y)\mapsto \ud_xf^y$ is Borel.
\end{prop}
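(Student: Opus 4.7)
The plan is to construct a jointly Borel representative of $(x,y) \mapsto \mathrm{d}_y f_x$ on $X \times Y$; the statement for $\mathrm{d}_x f^y$ follows by symmetry. Using a countable cover of $Y$ by $p$-weak charts, it suffices to produce the representative on $X \times V$ for a fixed $p$-weak chart $(V, \psi)$ of dimension $M$. Replace $f$ by the good Borel representative $\tilde f$ from Theorem \ref{thm:representativeJ1p}; then $\tilde f_x \in N^{1,p}(Y)$ for $\mu$-a.e.\ $x$ and, by Theorem \ref{thm:equiv_lipdiff_weakchart}, each such $\tilde f_x$ admits a unique $p$-weak differential $\mathrm{d}\tilde f_x \in \Gamma_p(T^*V)$ with respect to $(V,\psi)$.

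The approach is to realize $\mathrm{d}\tilde f_x$ as a limit of differentials of jointly Borel approximants built from $\tilde f$ in an explicit Borel manner. Fix a countable family $\{\phi_k\} \subset \mathrm{LIP}(Y)$ that is dense in $N^{1,p}(Y)$, and let $\mathrm{d}\phi_k \colon V \to (\R^M)^*$ be their (unique, Borel) differentials. Using Theorem \ref{thm:representativeJ1p}(3)--(4), the jointly Borel function
\[
g(x,y) = \|(|Df^y|_p(x), |Df_x|_p(y))\|'
\]
controls the minimal upper gradients of the slices $\tilde f_x - \sum_k a_k \phi_k$ uniformly, which allows one to verify that $x \mapsto \|\tilde f_x - \sum_{k=1}^n a_k \phi_k\|_{N^{1,p}(Y)}$ is Borel in $x$ for every fixed $(a_k) \in \R^n$. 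For $p > 1$, strict convexity of the $N^{1,p}(Y)$-norm yields a unique best approximation of $\tilde f_x$ in $\mathrm{span}\{\phi_1,\ldots,\phi_n\}$, whose coefficients $c_k^{(n)}(x)$ depend Borel measurably on $x$; for $p = 1$ one uses a Borel measurable selection. Setting
\[
\tilde f^{(n)}(x, y) := \sum_{k=1}^n c_k^{(n)}(x)\phi_k(y),
\]
we obtain a jointly Borel function whose slice-wise differential
\[
\mathrm{d}_y \tilde f_x^{(n)} = \sum_{k=1}^n c_k^{(n)}(x)\,\mathrm{d}_y \phi_k
\]
(granted by Proposition \ref{prop:c1-comb_admits_diff}) is jointly Borel on $X \times V$.

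Finally pass to the limit. By density, $\tilde f_x^{(n)} \to \tilde f_x$ in $N^{1,p}(Y)$ and hence $\mathrm{d}\tilde f_x^{(n)} \to \mathrm{d}\tilde f_x$ in $\Gamma_p(T^*V)$ for $\mu$-a.e.\ $x$. An Egorov-type argument yields a countable Borel partition $X = \bigsqcup_j X_j$ on which the convergence in $\Gamma_p(T^*V)$ is uniform; extracting on each $X_j$ a subsequence converging pointwise $\nu$-a.e.\ then produces a jointly Borel representative of $\mathrm{d}\tilde f_x$ on $X_j \times V$, and gluing over $j$ gives the representative on $X \times V$. The main obstacle is verifying the Borel dependence of $\|\tilde f_x - h\|_{N^{1,p}(Y)}$ on $x$ for fixed Lipschitz $h$, since the minimal upper gradient is a priori only lower semicontinuous; this is where Theorem \ref{thm:representativeJ1p}(3)--(4) is essential, providing the jointly Borel control $g$ that, together with the definition of the minimal $p$-weak upper gradient as the infimum over a separable set of upper gradients, forces the required Borel measurability in $x$.
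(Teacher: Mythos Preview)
There is a circularity problem in your argument. You invoke Theorem~\ref{thm:representativeJ1p} to obtain both the good representative $\tilde f$ and the jointly Borel function $g(x,y)=\|(|Df^y|_p(x),|Df_x|_p(y))\|'$. But the joint measurability of $g$ is exactly the Corollary of the proposition you are proving (the paper explicitly defers the measurability of the integrand in~(a3) to this Appendix). Since Theorem~\ref{thm:representativeJ1p} is established via Proposition~\ref{prop:q-planHV} and Lemma~\ref{lem:ug_along_hor_and_vert_plans}, both of which already use $g$ as a Borel function, invoking it here closes a circle. The paper does remark that the measurability of $|Df_x|_p(y)$ can be proved without the differentiable structure, so the circle can in principle be broken, but you do not address this and your stated mechanism is the circular one.

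Even granting that $g$ is Borel, your explanation of why this forces $x\mapsto \|\tilde f_x - h\|_{N^{1,p}(Y)}$ to be Borel is not convincing: $g(x,\cdot)$ dominates $|D\tilde f_x|_p$, not $|D(\tilde f_x-h)|_p$, and ``the infimum over a separable set of upper gradients'' does not yield a Borel-in-$x$ expression because that separable set depends on $x$. The honest route is simply that the Cheeger energy is $L^p$-lower semicontinuous (hence Borel) on $L^p(Y)$ and $x\mapsto \tilde f_x\in L^p(Y)$ is Borel; $g$ plays no role.

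The paper's proof bypasses all of this with a direct pointwise definition. Fixing the canonical $q$-plan $\bm\eta$ and disintegration $\{\bm\pi_x\}$ from Theorem~\ref{thm:canonical_repr_of_grad}, one \emph{declares} $\ud_xf^y$ to be the unique $\xi\in(\R^N)^*$ satisfying
\[
\left\|\frac{\xi((\varphi\circ\alpha)_t')-(f^y_\alpha)'(t)}{|\alpha_t'|}\right\|_{L^\infty(\bm\pi_x)}=0
\]
(and $0$ if no such $\xi$ exists), where $f^y_\alpha$ is the absolutely continuous representative of $f^y\circ\alpha$. This is a measurable condition in $(x,y,\xi)$ built from objects that are Borel by construction, and it pins down the differential $\mu\times\nu$-a.e.\ with no approximation, best-projection, or Egorov argument. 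Once the circularity above is repaired, your approach is a valid but considerably heavier alternative.
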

\begin{proof}

Let $(U,\varphi)$ and $(V,\varphi)$ be $p$-weak charts of dimension $N$ and $M$, respectively. Let $E\subset U$, $F\subset V$ be null-sets such that the pointwise norms $\Phi^x$ and $\Psi^y$ are well-defined and $f^y\in W^{1,p}(X)$, $f_x\in W^{1,p}(Y)$ whenever $x\notin E$ and $y\notin V$. We may define $\ud_xf^y$ as the unique vector $\xi\in (\R^N)^*$ for which
\[
\left\|\frac{\xi((\varphi\circ\alpha)_t')-(f^y_\alpha)'(t)}{|\alpha_t'|}\right\|_{L^\infty(\bm\pi_x)}=0,
\]
whenever this exists, and $0$ otherwise. Here $f^y_\alpha$ is the absolutely continuous representative of $f^y\circ\alpha$ if this exists  and 0 otherwise. It follows that $U\times V\ni(x,y)\mapsto \ud_xf^y$ thus defined is $\mu\times\nu$-measurable. A similar argument gives the claim for $\ud_yf_x$. Since $\mu\times\nu$ is Borel regular it follows that $\ud_xf^y$ and $\ud_yf_x$ have Borel representatives. By the arbitrariness of $(U,\varphi)$ and $(V,\psi)$ the claim follows.
\end{proof}
\begin{cor}
Let $f\in J^{1,p}(X,Y)$.  For $\mu$-a.e. $x\in X$ there exists a representative of $|Df_x|_p\in L^p(\nu)$ so that $(x,y)\mapsto |Df_x|_p(y)$ is Borel measurable. Similarly, for $\nu$-a.e. $y\in Y$ there exists a representative of $|Df^y|_p$ so that $(x,y)\mapsto |Df^y|_p(x)$ is Borel.
\end{cor}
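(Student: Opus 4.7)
The plan is to deduce the corollary from Proposition~\ref{prop:borel_slice} combined with Lemma~\ref{lem:diff_basic}(2) and the canonical representation of pointwise norms from Theorem~\ref{thm:canonical_repr_of_grad}. By Lemma~\ref{lem:diff_basic}(2), for $\mu$-a.e.\ $x\in X$ the minimal upper gradient of $f_x\in N^{1,p}(Y)$ satisfies $|Df_x|_p(y)=|\ud_y f_x|_y$ at $\nu$-a.e.\ $y$, where $|\cdot|_y$ is the pointwise seminorm associated to any $p$-weak chart of $Y$ containing $y$. Thus it suffices to produce a jointly Borel representative of $(x,y)\mapsto |\ud_y f_x|_y$.

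First I would fix countable covers of $X$ and $Y$ by $p$-weak charts $\{(U_i,\varphi_i)\}_{i\in\N}$ and $\{(V_j,\psi_j)\}_{j\in\N}$ (granted by the assumed $p$-weak differentiable structures). For each $j$ let $\Psi_j\colon Y\times(\R^{M_j})^*\to[0,\infty]$ denote a canonical representative of the gradient of $\psi_j$ as in Theorem~\ref{thm:canonical_repr_of_grad}; by construction $\Psi_j$ is Borel and $\Psi_j^y$ equals the pointwise norm on $V_j$ for $\nu$-a.e.\ $y\in V_j$. Proposition~\ref{prop:borel_slice}, applied on $U_i\times V_j$ with the chart $(V_j,\psi_j)$, supplies a jointly Borel map $(x,y)\mapsto\ud_y f_x$ with values in $(\R^{M_j})^*$. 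Composing, the function $(x,y)\mapsto \Psi_j(y,\ud_y f_x)$ is Borel on $U_i\times V_j$.

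To assemble a global representative I would fix a countable Borel partition $\{W_k\}$ of $X\times Y$ subordinate to the cover $\{U_i\times V_j\}$ (up to a $\mu\times\nu$-null set), define the candidate function to equal $\Psi_j(y,\ud_y f_x)$ on $W_k\subset U_i\times V_j$, and extend by zero on the leftover null set. Lemma~\ref{lem:diff_basic}(2) guarantees that this jointly Borel function agrees $\mu\times\nu$-a.e.\ with $|Df_x|_p(y)$, yielding the desired representative. An entirely symmetric argument, interchanging the roles of $X$ and $Y$, produces a jointly Borel representative of $(x,y)\mapsto|Df^y|_p(x)$.

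The argument is essentially measurability bookkeeping; no substantive obstacle appears beyond the fact that the slicewise differentials can be chosen \emph{jointly} Borel, which is the content of Proposition~\ref{prop:borel_slice}. The only minor point to verify is that a countable Borel partition subordinate to the chart cover exists and that the leftover set is $\mu\times\nu$-null, both of which are standard consequences of $\mu$ and $\nu$ being Radon on separable metric spaces.
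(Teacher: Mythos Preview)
Your proposal is correct and follows essentially the same approach as the paper: compose the jointly Borel slicewise differentials from Proposition~\ref{prop:borel_slice} with the Borel canonical norm representatives $\Psi^y$ (resp.\ $\Phi^x$) from Theorem~\ref{thm:canonical_repr_of_grad}, and invoke Lemma~\ref{lem:diff_basic}(2) to identify the result with the minimal upper gradient. The paper compresses this into a single sentence, while you spell out the chart-patching explicitly, but the content is the same.
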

\begin{proof}
The claim follows from Proposition \ref{prop:borel_slice} since $\Phi^x(\ud_xf^y)$ and $\Psi^y(\ud_yf_x)$ are Borel representatives of $|Df^y|_p$ and $|Df_x|_p$. 
\end{proof}
We  remark that the measurability of $|Df^y|_p(x)$ and $|Df_x|_p(y)$ can also be proven without using the $p$-weak differentiable structure.

\bibliographystyle{plain}
\bibliography{abib}
\end{document}